\documentclass[11pt]{amsart}

\usepackage[utf8]{inputenc}
\usepackage[T1]{fontenc}
\usepackage{amsmath, amssymb, amsthm}
\usepackage{tikz-cd}
\usepackage{enumitem}
\usepackage{graphicx}
\usepackage{mathdots}
\usepackage{color}
\usepackage{dsfont}
\usepackage{comment}
\usepackage{pdfsync}
\usepackage{epsfig}
\usepackage[all]{xy}
\usepackage{todonotes}
\usepackage{enumitem}

\definecolor{rltblue}{rgb}{0,0,0.4}
\definecolor{drkred}{rgb}{0.6,0,0}
\definecolor{drkgreen}{rgb}{0,0.4,0}

\usepackage[colorlinks=true, urlcolor=rltblue, citecolor=drkgreen, linkcolor=drkred]{hyperref}

\DeclareMathAlphabet{\mymathbb}{U}{BOONDOX-ds}{m}{n}

\newtheorem{thm}{Theorem}[section]
\newtheorem{theorem}[thm]{Theorem}
\newtheorem{lemma}[thm]{Lemma}
\newtheorem{proposition}[thm]{Proposition}
\newtheorem{corollary}[thm]{Corollary}

\theoremstyle{definition}
\newtheorem{definition}[thm]{Definition}

\theoremstyle{remark}

\newtheorem{remark}[thm]{Remark}

\newtheorem{historic}[thm]{Historic Remark}

\theoremstyle{plain}

\newcounter{contenumi}

\def\la{\langle}
\def\ra{\rangle}

\newcommand{\N}{\numberset{N}}

\newcommand{\Sub}{\operatorname{Sub}}
\renewcommand{\S}{\operatorname{S}}

\def\X{\mathcal{X}}

\def\om{\omega}
\def\a{\alpha}
\def\b{\beta}
\def\G{\Gamma}
\def\g{\gamma}
\def\d{\delta}

\def\z{\zeta}
\def\1{\mathsd{1}}

\def\pbar{\bar{p}}

\def\conc{{}^\smallfrown}

\def\ggeq{\mathrel{\geq\!\!\!\geq}}

\makeatletter
\newcommand{\dotminus}{\mathbin{\text{\@dotminus}}}
\newcommand{\@dotminus}{%
	\ooalign{\hidewidth\raise1ex\hbox{.}\hidewidth\cr$\m@th-$\cr}%
}
\makeatother

\newcommand{\RCA}{\mathsf{RCA}_0}
\newcommand{\WKL}{\mathsf{WKL}_0}
\newcommand{\ACA}{\mathsf{ACA}_0}
\newcommand{\ATR}{\mathsf{ATR}_0}
\newcommand{\RT}{\mathsf{RT}}

\newcommand{\TJ}{\mathsf{TJ}}
\newcommand{\lead}{\operatorname{lead}}
\newcommand{\CA}{\mathsf{\text{-}CA}_0}

\def\TJ{{\textrm{TJ}}}
\newcommand{\set}[1]{{\{{#1}\}}}
\def\stops{{\downarrow}}
\def\notstops{{\uparrow}}
\def\WO{{\textrm{WO}}}

\newcommand{\restricted}{{\upharpoonright}}
\newcommand{\leo}{<\sb{o}}
\newcommand{\leqo}{\leq\sb{o}}

\newcommand{\geqo}{\geq\sb{o}}
\newcommand{\pluso}{+\sb{o}}
\newcommand{\gd}[1]{{\ulcorner{#1}\urcorner}}
\def\N{{\mathbb{N}}}
\def\stops{{\downarrow}}
\def\notstops{{\uparrow}}

\newcommand{\homo}{homogeneous}

\def\ifff{\Longleftrightarrow}
\newcommand{\isfunc}[3]{{{#1}\colon{#2}\rightarrow{#3}}}

\usepackage{xcolor}
\newcommand{\lorenzo}[1]{\textcolor{blue}{#1}}
\newcommand{\andrea}[1]{\textcolor{green}{#1}}

\title{The strength of Ramsey's theorem for $\a$-large sets}

\author{Lorenzo Carlucci}
\address{Department of Mathematics, Sapienza University of Rome, Italy}
\email{lorenzo.carlucci@uniroma1.it}
\urladdr{https://sites.google.com/uniroma1.it/lorenzocarlucci-sapienza/homepage}

\author{Andrea Volpi}
\address{Department of Mathematics, University of Udine, Italy}
\email{andrea.volpi@uniud.it}
\urladdr{https://andreasdfghj.github.io/andreavolpi/}

\author{Konrad Zdanowski}
\address{Institute of Computer Science, Cardinal Stefan Wyszynski University, Warsaw, Poland}
\email{k.zdanowski@uksw.edu.pl}
\urladdr{https://www.impan.pl/~kz/}

\subjclass[2020]{03B30 (primary), 03D55, 05D10 (secondary)}
\keywords{Reverse mathematics, Ramsey theory, largeness notions}

\begin{document}

\begin{abstract}
We calibrate the reverse mathematical strength of a family of extensions of Ramsey's theorem to finite colorings of certain subsets of the natural numbers of unbounded finite dimension. 
Specifically, we analyze the principles $\RT^{!\alpha}_k$ asserting that every  $k$-coloring of the exactly $\alpha$-large subsets of an infinite $X \subseteq \mathbb{N}$ admits an infinite homogeneous set, where $\alpha$-largeness is defined via systems of fundamental sequences in the style of Ketonen and Solovay. 
For each countable ordinal $\alpha < \Gamma_0$ and each $k \geq 2$, we prove over $\RCA$ that the hierarchy of theorems $\RT^{!\a}_k$ corresponds exactly to the hierarchy of systems axiomatized by closure under transfinite Turing jumps, yielding a fine-grained classification between $\ACA$ and $\ATR$. 
Our results extend previous work on the case $\alpha=\omega$ and provide a uniform correspondence between countable indecomposable ordinals below $\Gamma_0$ and natural Ramsey-like theorems.
\end{abstract}

\maketitle


%
%
%
%

\section{Introduction}

The study of the logical strength of Ramsey-like theorems has long been central in reverse mathematics. 
In this paper we calibrate the axiomatic strength of a family of far-reaching generalizations of the classical infinite Ramsey's theorem to colorings of sets of unbounded finite dimension. 
In particular we analyze Ramsey-like theorems for colorings of so called exactly $\alpha$-large or $\a$-size subsets of the natural numbers. 

A motivation to generalize largeness notions for finite sets of natural numbers comes from the celebrated result of Paris and Harrington. 
They proved in \cite{parisharrington} that a certain statement in finite Ramsey theory, expressible in Peano arithmetic, is not provable in this system.
This is often regarded as the first \lq\lq natural\rq\rq\ true statement independent from Peano arithmetic.
The concept of a relatively large (or $\omega$-large) finite subset of the natural numbers is the basic ingredient of the Paris-Harrington principle. 
In this context, a finite set $s$ of natural numbers is called relatively large (or $\omega$-large) if $|s| \geq \min s + 1$. 
With this terminology, the Paris-Harrington principle is just the standard finite Ramsey's theorem with the extra condition that the monochromatic set is relatively large. 
Thus, the largeness of the homogeneous set is given in terms of the elements of the set and not as a fixed prescribed cardinality. 
It is possible to generalize this idea to obtain largeness notions associated to ordinals. 
Largeness notions of this kind, based on systems of fundamental sequences,  were introduced in \cite{ketonensolovay}. 
They have since been used by many other authors, especially in the study of  finite Ramsey-like statements \cite{bigorajskaKotlarski1999, bigorajskaKotlarski2002, bigorajska2006, kotlarskiPiekartWeiermann2007, kotlarskiZdanowski2009}. 
Recently, a far-reaching extension of the Paris-Harrington principle has been introduced and analyzed in \cite{marcone2025}. 

Relatively large sets also naturally arise in Ramsey theory for purely combinatorial reasons. 
The natural generalization of Ramsey's theorem to finite colorings of {\em all finite sets} fails, as witnessed by coloring according to the parity of the size of the set.
The following weakening is also false: for every finite coloring $c$ of the finite subsets of the natural numbers there exists an infinite set $H$ of natural numbers such that for infinitely many numbers $n$, $c$ is constant on the $n$-size subsets of $H$. 
Interestingly, a counterexample is given by the coloring that assigns one color to all relatively large sets and the opposite color to all other sets.

While $\omega$-large sets provide a counterexample to the natural extension of Ramsey's theorem to colorings of all finite sets, finite sets $s$ such that $|s| = \min s +1$, called {\em exactly} $\omega$-large sets or {\em $\omega$-size} sets, are the key to define extensions of Ramsey's theorem for colorings of families of finite sets containing elements {\em of unbounded size}.
Weakening the requirement of homogeneity from all finite sets to all exactly $\omega$-large sets results in a true principle, which we denote by $\RT^{!\omega}$ following \cite{carlucci2014}. 
This principle is the base case of a generalization of Ramsey's theorem to colorings of exactly $\a$-large sets for ordinals $\a > \om$, due to Pudl\'ak and R\"{o}dl~\cite{Pud-Rod:82} and by Farmaki and Negrepontis~\cite{Far-Neg:08}, which we denote by $\RT^{!\a}$. 

$\RT^{!\om}$ is known to be computationally and proof-theoretically stronger than the usual Ramsey's theorem for each fixed finite dimension $\RT^n_k$ and even stronger than Ramsey's theorem for all finite dimensions $\forall n \RT^n_k$ (see \cite{clote1984recursion}). 
From the point of view of computability theory, the theorem corresponds the $\omega$-th Turing jump; in reverse mathematics terms, it is equivalent to $\ACA^+$ over $\RCA$, where $\ACA^+$ extends $\RCA$ by the axiom of closure under the $\omega$-th Turing jump (see \cite{carlucci2014}).

Our main goal is to extend \cite{carlucci2014} to all countable ordinals $\alpha < \G_0$ and to prove the following Main Theorem.

\begin{theorem}[Main theorem]\label{thm:main}
For each $\a < \G_0$, for all $k\geq 2$,
$$\RCA \vdash \RT^{! \a}_k \leftrightarrow \Pi^0_{\lead(\alpha)} \CA.$$
\end{theorem}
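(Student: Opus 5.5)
We read $\lead(\a)$ as the leading term $\om^\b$ of the Cantor normal form $\a=\om^\b\cdot c+\dots$. Then $\Pi^0_{\lead(\a)}\CA$ asserts that for every $X$ the $\om^\b$-th Turing jump $X^{(\om^\b)}$ exists, along the canonical notation for $\om^\b$ below $\G_0$. The sanity check $\a=\om$ gives closure under the $\om$-th jump, i.e.\ $\ACA^+$, which agrees with \cite{carlucci2014}. The proof splits into three parts, each carried out for a fixed $\a<\G_0$ by external (metatheoretic) induction on ordinal notations. No transfinite induction is done inside $\RCA$; we only use that the notation system up to $\a$ is primitive recursive and provably linear in $\RCA$.

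\emph{Step 1: reduction to $\a=\om^\b$.} We first show $\RCA\vdash \RT^{!\a}_k\leftrightarrow\RT^{!\om^\b}_k$. For the right-to-left direction, use the Ketonen--Solovay decomposition to write an exactly $\a$-large set $s$ as a concatenation $s_1<s_2<\dots$ of an exactly $\om^\b$-large block, repeated $c$ times, followed by blocks for the lower terms. Given a coloring $f$ of the $\a$-large sets, define a coloring $g$ of the $\om^\b$-large sets $t$ by recording $f$ of the $\a$-large sets canonically determined by $t$ and the next elements of the ambient set. Iterate $\RT^{!\om^\b}_k$ finitely many times (once per block, $c$ plus the number of lower terms) and thin out at each stage. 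The result is a set on which $f$ depends only on data already made constant, so it is homogeneous for $f$. For the left-to-right direction, extend a coloring of $\om^\b$-large sets to $\a$-large sets by coloring each set through its distinguished $\om^\b$-large block, and then thin out the homogeneous set.

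\emph{Step 2: upper bound, $\Pi^0_{\om^\b}\CA\vdash\RT^{!\om^\b}_k$.} We generalize the Clote/Carlucci--Zdanowski argument. By induction on $\g\le\b$ we show that for a coloring computable in $X$ there is an infinite set, computable in $X^{(\om^\g)}$ (suitably relativized), on which the color of an exactly $\om^\g$-large set depends only on its minimum.
\begin{itemize}
\item \emph{Successor case $\g+1$.} An $\om^{\g+1}$-large set $s$ is a union of $\min s$ consecutive $\om^\g$-large blocks. Apply the inductive construction $\om$ times along the jump hierarchy, so that $X^{(\om^\g\cdot n)}$ handles the $n$-th block. Then diagonalize using $X^{(\om^{\g+1})}$.
\item \emph{Limit case $\g$.} Use the fundamental sequence $\g[n]$: the set $s$ is $\om^{\g[\min s]}$-large. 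A uniform use of the stage-$\min s$ jumps, all computable from $X^{(\om^\g)}$, yields the prehomogeneous set.
\end{itemize}
A final application of the pigeonhole principle on minima gives homogeneity. Formalizing this needs only $\Sigma^0_1$-induction relative to $X^{(\om^\b)}$.

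\emph{Step 3: lower bound, $\RT^{!\om^\b}_2\vdash\Pi^0_{\om^\b}\CA$.} Fix $X$. We define, by effective transfinite recursion on $\g\le\b$, a computable coloring $c_\g$ of the exactly $\om^\g$-large sets such that any infinite homogeneous set $H$ computes $X^{(\om^\g)}$ uniformly. The color of $s$ records whether approximations to the jump-hierarchy values below $\min s$, computed using elements of $s$ as stage bounds, are stable. For successors this extends the $\om$-large coding of \cite{carlucci2014} block by block. At limits we exploit that $\min s$ selects the fundamental-sequence term $\g[\min s]$.

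\emph{Main obstacle.} I expect the main difficulty to be this lower-bound coding at limit stages. It must make the fundamental sequences of the Ketonen--Solovay system (which for $\a$ approaching $\G_0$ involve Veblen functions) match the standard jump hierarchy $H$-sets closely enough that a homogeneous set can verify, inside $\RCA$, that the decoded object satisfies the defining conditions of $X^{(\om^\b)}$. I would first settle $\b<\epsilon_0$, and then handle Veblen limits using the fact that $\varphi_\d(\z)[n]$ is obtained by iterating lower $\varphi$'s.
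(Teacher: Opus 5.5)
\textbf{Decomposable ordinals in the upper bound.} There is a genuine gap here. The right-to-left direction of your Step~1 does not work as described, and Step~2 tacitly needs the same missing case.

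In the system of Definition~\ref{system of fund seq} the fundamental sequence acts on the \emph{last} term of the normal form. So a $(\lead(\a)+\a')$-size set is an $\a'$-size block \emph{followed by} a $\lead(\a)$-size block; your order is reversed. The colour of $u\conc t$ therefore depends on an arbitrary low-order prefix $u$. Removing that dependence needs a separate, nested homogeneous set for $t\mapsto f(u\conc t)$ for each of infinitely many prefixes. Finitely many direct applications of $\RT^{!\om^\b}_k$ with thinning cannot supply this, and colouring $t$ through one ``canonical'' continuation only controls that one continuation.

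The same problem sits inside Step~2. Once the minimum $x$ of an $\om^{\g+1}$-size set is fixed, the residual colouring lives on $(\om^\g\cdot x)$-size sets; similarly $\om^\g[x]=\om^{\g[x]}\cdot x$ at limits with $\g<\om^\g$. Your hypothesis only makes colours of $\om^\g$-size sets depend on the minimum, so it does not apply, and ``$X^{(\om^\g\cdot n)}$ handles the $n$-th block'' is exactly the unproved step.

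The paper closes this with a simultaneous induction over \emph{all} $\a<\G_0$, for both $\RT^{!\a}$ and $\RT^{!1\uplus\a}$, showing that $\TJ(c\oplus X,\a+1)$ computes a solution (Theorem~\ref{upper general case}). For $\a=\lead(\a)+\a'$ it enumerates the $(1\uplus\a')$-size prefixes. Every prefix from the limit set $Z$ thus eventually receives its own nested homogeneous set for the $\lead(\a)[\max h_i]$-part, all uniformly below $\TJ(c\oplus X,\lead(\a))$. It then applies the hypothesis for $1\uplus\a'$ to the induced colouring of $Z$. Lemma~\ref{equivsystems} then gives $\Pi^0_{\lead(\a)}\CA\vdash\RT^{!\a}_k$ directly, with no analogue of your Step~1 right-to-left direction.

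\textbf{Well-foundedness.} Your claim that no transfinite induction is needed fails for the lower bound. In the paper, $\Pi^0_{\a}\CA$ is $\RCA+\WO(\a)+\forall X\exists Y\,(Y=\TJ(X,\a))$, so you must derive $\WO(\om^\b)$ from $\RT^{!\om^\b}_k$. Your coding also needs nestedness, Proposition~\ref{prop: goodnorm}, and the existence of $\om^\b$-size subsets of every infinite set. The paper establishes these with arithmetical transfinite induction along $\om^\b$. The last of them is not provable in $\RCA$: already for $\b=\om$ it implies the totality of an Ackermann-type function.

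The paper extracts what it needs in Lemma~\ref{wellorder a}. By the external induction hypothesis, $\RT^{!\om^\b}_k$ yields $\Pi^0_{\om^\d}\CA$ for every $\d<\b$. The proof-theoretic ordinal of that system is $\varphi_{\d+1}(0)$, via the Marcone--Montalb\'an well-ordering characterization and Arai's analysis. Since $\b<\G_0$, some $\d<\b$ has $\om^\b<\varphi_{\d+1}(0)$. Arithmetical transfinite induction along $\om^\b$ then follows, and all coding lemmas are carried out over $\RCA+\WO(\Lambda)$.

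Finally, your Step~3 has the right shape (it is the paper's first lower-bound proof) but leaves out its working parts:
\begin{enumerate}
\item the counting argument forcing every homogeneous set to have the ``stable'' colour (Lemma~\ref{lem:colored:one});
\item the scattering $S(\a,H)$, which transfers homogeneity from $c_{\a+3}$ to all $c_{\b+3}$ with $\b\le\a$;
\item the indiscernibility lemmas, which make the decoded set $T(A,\a,H)$ a $\Delta^0_1$ set satisfying the jump conditions.
\end{enumerate}
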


Here $\lead(\a)$ denotes the first term in the Cantor normal form of $\a$.
The system $\Pi^0_{\b} \CA$ is axiomatized by closure under the $\b$-th Turing jump. 
These systems form a hierarchy intermediate between $\ACA$ and $\ATR$. 
The rest of the notation will be introduced in the next sections.
The Main Theorem shows that for each indecomposable ordinal $\om^\a < \G_0$, the system $\Pi^0_{\om^\a} \CA$ is equivalent to countably many Ramsey-like statements.


Ramsey's theorem for $\a$-size sets $\RT^{!\a}$ can be seen as particular cases of Nash-Williams' generalization of Ramsey's theorem to families of finite subsets of the natural numbers satisfying some specific properties and called {\em barriers} \cite{Todorcevic+2010}. 
Nash-Williams' Ramsey's theorem for barriers is, in turn, a consequence of the clopen Ramsey's theorem (see~\cite{simpson}).
In this context, the family of exactly $\omega$-large sets is known as the Schreier barrier \cite{Todorcevic+2010}. 
More generally, $\a$-size sets are barriers under mild assumptions (see \cite{marcone2025} for details). 

Clote \cite{clote1984recursion,clote1986generalization} gave a fine-grained analysis of the effective content of Nash-Williams' Ramsey's theorem for barriers as a function of the order-type of the barrier under lexicographic ordering. 
From the viewpoint of computability theory, Clote's results are comparable with ours. 
On the other hand, our reverse mathematics results are new and are not trivial consequences of Clote's results. 
The main differences between Clote's work and the present work are the following. 
First, Clote proves the computability-theoretic lower bounds with respect to a notion of \lq\lq canonical barrier\rq\rq\ that is similar to, but does not coincide with, the notion of $\a$-size set. 
More importantly, Clote's lower bound proofs are not formalizable in the setting of subsystems of second-order arithmetic to the extent that they are based on a generalized notion of limit taken along a (canonical) barrier. 
Indeed, the lower bounds need to be proved in a very different way in order to obtain reverse mathematics results. 
Note that, in general, the following is the case. 
Let $P$ be a $\Pi^1_2$-principle such that there exists a computable instance of $P$ such that all of its solutions compute the $\alpha$-th jump $\emptyset^{(\a)}$. 
This does {\em not} imply, in general, that $\RCA + P$ proves $\forall X \exists Y (Y = X^{(\a)})$, even if the proof relativizes. 

We now describe the structure of the paper.
In Section \ref{sec:preliminaries} we introduce the basic notions needed throughout the paper.
In particular we introduce a system of fundamental sequences; we fix a definition of transfinite Turing jumps; we introduce the crucial largeness notions based on ordinals; we define the axiomatic systems $\Pi^0_{\alpha} \CA$; we formulate the Ramsey-like principles of interest.

In Section \ref{sec:lower} we deal with the first half of the Main Theorem \ref{thm:main}, i.e.\ we show that $\RT^{! \om^\a}$ implies $\Pi^0_{\om^\a} \CA$. 
We give two proofs of this result, based on quite different ideas. 
In the first proof we define computable colorings of $\a$-size sets so as to ensure that all of their homogeneous sets code the appropriate transfinite Turing jump.
In the second proof we show that that $\RT^{! \om^\a}_4$ proves a well-ordering preservation principle known to be equivalent to $\Pi^0_{\om^\a} \CA$. 
This proof combines the strategy of \cite{carlucci2025} with recent results of \cite{marcone2025}.

In Section \ref{sec:upper} we conclude the proof of the Main Theorem \ref{thm:main} by showing that for each $\a$, the $(\a+1)$-th Turing jump of an instance of $\RT^{!\a}_k$ computes a solution.

\section{Preliminaries}\label{sec:preliminaries}

We work with the well-known theories of second-order arithmetic $\RCA$, $\ACA$, $\ATR$ and with the less studied intermediate systems $\Pi^0_\a \CA$ for $\a < \G_0$ that we define below.
For background on reverse mathematics we refer to \cite{simpson, dzhafarovmummert}.
For background on fundamental sequences and functions in the Hardy hierarchy we refer to \cite{pohlers, fairtlough}. 
We also use some results from \cite{marconemontalban}, which brings together some aspects of reverse mathematics, notations for well-orderings, and functions in the Hardy hierarchy.

We start by fixing some notation.
As usual, we denote ordinals by lowercase Greek letters.
We use uppercase letters $X, Y, Z$ to denote subsets of $\N$ which may be finite or infinite.
We use lowercase letters $s, t, u$ to denote finite subsets of $\N$. 
We use $\N^+$ to denote the set of positive integers. 

If $X \subseteq \N$ we denote by $[X]^{< \omega}$ the set of finite subsets of $X$ and, for $n \in \N$, by $[X]^n$ the set of subsets of $X$ with exactly $n$ elements.

We identify a subset $X$ of $\N$ with the strictly increasing sequence (finite or infinite) which enumerates it and denote by $X(i)$ or $X_i$ the element in position $i$ in the sequence.
If $s$ is such a finite sequence then $|s|$ denotes its length (coinciding with the cardinality of the set) and we write $s = \langle s_0, \ldots, s_{|s|-1} \rangle$. 
We ambiguously use the same notation to refer to the numerical code of the sequence with respect to a standard pairing function.

Given $s,t \subseteq \N$, by $s \sqsubseteq t$ we mean that $s$, as a sequence, is an initial segment (or prefix) of $t$.
This is stronger than $s \subseteq t$, which denotes set-theoretic inclusion as usual.
The irreflexive versions of the previous relations are denoted by $\sqsubset$ and $\subset$.

We let $s^* = s \setminus \{\max s\}$ and $X^- = X \setminus \{\min X\}$.
By $s < X$ we mean that each element of $s$ is strictly smaller than each element of $X$ or equivalently (when $s$ and $X$ are nonempty) $\max s < \min X$.
We write $s \conc X$ for the concatenation of the sequences $s$ and $X$.
Notice that in our setting $s \conc X$ does make sense only if $s < X$ and, set-theoretically, coincides with $s \cup X$.

\subsection{Ordinals and fundamental sequences}\label{ssec:ordinals}

The \textit{Feferman–Schütte ordinal} $\G_0$ is the first fixed-point of the binary Veblen function, namely the least ordinal $\a$ such that $\a = \varphi_\a (0)$.
It is the proof-theoretic ordinal of the system $\ATR$ (see \cite{simpson} and references therein). 

Ordinals below $\G_0$ can be represented in a standard way using the binary Veblen function. 
We refer to \cite{pohlers} for a detailed treatment of such notations.
This gives a \textit{recursive system of ordinal notation} for ordinals below $\G_0$ and from now on we tacitly assume to work with ordinals in such an ordinal notation system. 
A specific notation system will be defined later.
We use $\le$ to refer to the ordering relation and we write $\alpha + 1$ to denote the successor of $\alpha$ in the ordinal notation system.

We express the fact that $\a$ is a (notation for a) well-ordering as follows. 

\begin{definition}\label{wellorderingdef}
Let $\a$ be a (notation for a) linear ordering $(X, \leq_X)$. 
We say that $\a$ is a \textit{well-ordering} if the following holds
$$\forall f\colon\N\to\a\, (\forall i<j( f(j)\le_X f(i)) \rightarrow \exists n\forall i\geq n (f(i)=f(n))).$$
We denote the above formula by $\WO(\a)$.
\end{definition}

If $\WO(\a)$ is provable in a theory $T$ then $\a$ is a well-ordering in (all models of) $T$ and we say that $\a$ {\em is a well-ordering in} $T$.

Recall that a {\em fundamental sequence} for an ordinal $\a$ is a non-decreasing sequence converging to $\a$.
If we have a fundamental sequence for each ordinal below or equal to $\G_0$, we speak of a {\em system of fundamental sequences on} $\G_0$.
We use the \textit{system of fundamental sequences} on $\G_0$ defined in \cite{marcone2025} and the associated notion of normal form.

For ordinals $\a$ and $\b$ we write $\a \ggeq \b$ to mean that the last term in the normal form of $\a$ is greater than or equal to the first term in the normal form of $\b$.

\begin{definition}\label{system of fund seq}
We stipulate that the fundamental sequence of $0$ is $0[n] = 0$ for all $n$.
For a positive $\a < \G_0$, we distinguish cases.
\begin{enumerate}
	\item If $\a = \a_0 + \om^{\a_1}$ with $\a_0 \ggeq \om^{\a_1}$ then $\a [n] = \a_0 + (\om^{\a_1} [n])$.
	\item If $0 < \a < \om^\a$ then $\om^\a [n] = \om^{\a [n]} \cdot n$.
	\item If $0 < \d < \varphi_\d (0)$ then $\varphi_{\d} (0) [n] = \varphi_{\d [n]}^{n + 1} (0)$.
	\item If $0 < \a < \varphi_\d (\a)$ and $0 < \d$ then $\varphi_{\d} (\a) [n] = \varphi_{\d [n]}^{n+1}(\varphi_{\d}(\a [n]) + 1)$.
    \item $\G_0 [n] = \varphi_{\varphi_{\ddots \varphi_0 (0) \iddots} (0)} (0)$, where $\varphi$ is iterated $n+1$ times.
\end{enumerate}
\end{definition}

A crucial property satisfied by this system of fundamental sequences is {\em nestedness}, which is a weakening of the Bachmann property.
A system of fundamental sequences is {\em nested} if it is never the case for $\g < \b$ and $n > 1$ that $\g > \b[n] > \g[n]$.
 
\begin{theorem}\label{nested}\cite[Theorem 2.16]{marcone2025}
The system of fundamental sequences of Definition \ref{system of fund seq} is nested.
\end{theorem}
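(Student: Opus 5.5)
We prove nestedness by induction on $\b$ (along the well-ordering of notations below $\G_0$), following the case distinction of Definition \ref{system of fund seq}. Fix $\b$ and $n>1$, and suppose towards a contradiction that some $\g<\b$ satisfies $\b[n]<\g$ and $\g[n]<\b[n]$. Since $\g[n]\le\g$ always holds, the task reduces to showing: whenever $\b[n]<\g<\b$, we have $\g[n]\ge\b[n]$. Informally, the fundamental sequence of any ordinal lying strictly between $\b[n]$ and $\b$ cannot jump below $\b[n]$. Beforehand we record two monotonicity facts, proved by a simultaneous induction: $\a\mapsto\a[n]$ is weakly monotone in the appropriate sense, and $\b[n]<\b[n+1]$ for limit $\b$.

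\emph{Case (1)}, $\b=\b_0+\om^{\b_1}$ with $\b_0\ggeq\om^{\b_1}$ and $\b_1>0$. Any $\g$ with $\b[n]<\g<\b$ has the form $\g=\b_0+\g'$ with $\om^{\b_1}[n]<\g'<\om^{\b_1}$. Because $\b_0\ggeq\om^{\b_1}>\g'$, the normal form of $\g$ is the normal form of $\b_0$ followed by that of $\g'$, and its last term is the last term of $\g'$. Hence $\g[n]=\b_0+\g'[n]$, and the claim follows from the induction hypothesis applied to $\om^{\b_1}$. If $\b$ is a successor $\b_0+1$, then $\b[n]=\b_0$ and no $\g$ lies strictly between $\b_0$ and $\b_0+1$, so there is nothing to prove.

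\emph{Case (2)}, $\b=\om^{\a}$ with $\a<\om^\a$, so $\b[n]=\om^{\a[n]}\cdot n$. Write $\g$ with $\om^{\a[n]}\cdot n<\g<\om^\a$ in normal form. Its leading exponent $\a'$ satisfies $\a[n]\le\a'<\a$. If $\a'=\a[n]$, then the coefficient of $\om^{\a[n]}$ in $\g$ is at least $n$. If that coefficient is exactly $n$, some further term follows, and removing or shrinking only the last term keeps $\g[n]\ge\om^{\a[n]}\cdot n$. If the coefficient exceeds $n$, the same argument applies. If $\a'>\a[n]$, we apply the induction hypothesis to $\a$ with $\g=\a'$ and obtain $\a'[n]\ge\a[n]$. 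We then need $\g[n]\ge\om^{\a'[n]}\cdot n$ (or larger), which follows by inspecting how the last term of $\g$ is reduced. These subcases are a careful but routine bookkeeping exercise on normal forms.

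\emph{Cases (3)--(5)} (the Veblen cases) are the main obstacle. Take $\b=\varphi_\d(\a)$ with $\b[n]=\varphi^{n+1}_{\d[n]}(\varphi_\d(\a[n])+1)$, and $\g$ strictly between $\b[n]$ and $\b$. Write $\g$'s leading term as $\varphi_{\d'}(\a')$ and compare it with $\varphi_{\d[n]}$ and $\varphi_\d$. The first subcase is that the leading term already dominates $\b[n]$. Then $\g[n]$ can only modify the trailing part when $\g$ is not a single term. When $\g$ is a single term, we apply the induction hypothesis to $\d$ or $\a$ (the case $\d'<\d$ uses $\d[n]\le\d'$, via nestedness of $\d$; the case $\d'=\d$ uses $\a'$ versus $\a$). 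Both are combined with the fact that the $n+1$ iterations of $\varphi_{\d[n]}$ built into $\b[n]$ are exactly matched by the iteration count used in $\g[n]$. This matching is why the hypothesis $n>1$ and the "$+1$" and "$n+1$" choices in Definition \ref{system of fund seq} matter. I expect this comparison of iterated Veblen towers to be the delicate step. I would first try to isolate a lemma: for $\d[n]\le\d'<\d$ and any $\xi$, we have $\varphi_{\d'}(\xi)[n]\ge\varphi_{\d[n]}^{n}(\cdot)$ of the appropriate base. That lemma would reduce cases (3) and (4) to case (2)-style arguments. Case (5) for $\G_0$ is handled analogously by induction on the height of the tower.
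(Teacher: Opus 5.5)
The paper does not reprove this result; it imports it from \cite[Theorem 2.16]{marcone2025}. Judged on its own terms, three parts of your proposal are fine in outline: your reformulation (if $\b[n]<\g<\b$ then $\g[n]\ge\b[n]$), the reduction for multi-term $\g$, and clauses (1)--(2). The Veblen clauses, however, carry essentially all of the content, and they are not proved. The lemma you would ``try to isolate'' is never stated (``of the appropriate base''), and the concrete steps you indicate do not work.

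Take $\b=\varphi_\d(\a)$ as in clause (4). Put $E=\varphi_\d(\a[n])$, $\zeta_0=E+1$ and $\zeta_{j+1}=\varphi_{\d[n]}(\zeta_j)$, so that $\b[n]=\zeta_{n+1}$. Let $\g=\varphi_{\d'}(\a')$ be a single term in $(\b[n],\b)$.

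Nestedness at $\d$ does not give $\d[n]\le\d'$ from $\d'<\d$; it only gives $\d'[n]\ge\d[n]$ when $\d[n]<\d'<\d$. And $\d'<\d[n]$ really occurs: for $\b=\varphi_2(0)$, the ordinal $\g=\om^{\b[n]+1}$ lies in $(\b[n],\b)$ with $\d'=0<1=\d[n]$. When $\d'\le\d[n]$, the constraint $\g>\b[n]$ is pushed into the argument: $\a'>\zeta_n$ if $\d'=\d[n]$, and $\a'>\b[n]$ if $\d'<\d[n]$. In the first case you need $\a'[n]\ge\zeta_n$ for an arbitrary $\a'\in(\zeta_n,\b)$. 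An induction on $\b$ for plain nestedness does not give this, because $\zeta_n$ is only an intermediate stage of the tower $\b[n]$, and $\a'$ need not lie below $\a$ or $\d$. Nor do the iteration counts ``match'': for $\g=\varphi_{\d[n]}(\a')$, $\g[n]$ iterates $\varphi_{\d[n][n]}$, not $\varphi_{\d[n]}$.

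What is missing is a strengthened, level-by-level statement, for example: for every $j\le n+1$ and every $\g$ with $\zeta_j<\g<\b$, $\g[n]\ge\zeta_j$. You would prove it by induction on $j$ with an inner induction on $\g$, so that the $n+1$ layers of $\varphi_{\d[n]}$ in $\b[n]$ are peeled off one at a time by passing to arguments. The case $\d[n]<\d'<\d$ then combines $\d'[n]\ge\d[n]$ with $\a'[n]\ge E$, and the case $\d'=\d$ uses the hypothesis at $\a$, as you say.

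This is also exactly where $n>1$ enters. At level $0$, $\g=\om^{E+1}$ has $\g[n]=E\cdot n$, which is $\ge E+1$ only when $n\ge 2$. For $n=1$ and $\d=1$ this propagates to an actual failure: $\g=\om^{\om^{\om^{E+1}}}>\b[1]=\om^{\om^{E+1}}$, but $\g[1]=E$.

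You also omit the case $\d'>\d$. There $\g$ is a $\varphi_\d$-fixed point with $E<\g<\a$, and the hypothesis at $\a$ only yields $\g[n]\ge\a[n]$, far below $\b[n]$. You have to show this case is empty, for instance by a separate inductive lemma: if $\a$ is not a $\varphi_\d$-fixed point, every $\varphi_\d$-fixed point below $\a$ is $\le\a[n]$. A lemma of this kind is also tacitly used in your case (2), to know that the leading exponent $\a'\in(\a[n],\a)$ is not an epsilon number, so that $\om^{\a'}[n]$ is given by clause (2). Clause (5) is likewise left as a one-line promise.
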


For $s\in[\N]^{<\om}$ and $\a < \G_0$ 
we denote by $\a[s]$ 
the ordinal $\a[s_0] \ldots [s_{|s|-1}]$.

\subsection{Coding Turing jumps}\label{subsec: turing jump}

We use standard notation from computability theory. 
For a (finite or infinite) sequence $s$ of natural numbers, $\{e\}^s(n)\downarrow=m$ (or $\{e\}^s(n)=m$) denotes the fact that the computation of the oracle Turing machine with code $e$ on input $n$, run with oracle $s$, terminates and outputs $m$. 
If this computation does not halt we write $\{e\}^s(n)\uparrow$, otherwise we write $\{e\}^s(n)\downarrow$. 
We write $\{e\}^s_t(n)\downarrow$ (or $\{e\}^s(n)\downarrow\leq t$) if the computation halts in less than $t$ steps. 
We write $\{e\}^s(n)\downarrow_m$ to denote that the computation has code below $m$, relative to a fixed coding scheme for computations.  

We code Turing jumps as follows, by transfinite induction.
We tacitly use the fact that the ordinal $\a$ is coded by a natural number.

\begin{definition}\label{dfn:TuringJump}
We denote by $\TJ(X,0)$ the $0$-th jump of $X$, which is the set of pairs $\set{(0,x)\colon x\in X}$.
The $(\alpha+1)$-th jump of $X$, $\TJ(X,\alpha+1)$, is the set 
$$\TJ(X,\alpha)\cup\set{(\alpha+1,e)\colon \set{e}^{\TJ(X,\alpha)}(0)\downarrow }.$$
For a limit $\lambda$, we set $\TJ(X,\lambda)$ as $\bigcup_{\alpha<\lambda}\TJ(X,\alpha)$.
\end{definition} 

Such a coding allows to compute easily $\TJ(X,\beta)$ from $\TJ(X,\alpha)$, for $\beta<\alpha$.
Moreover, it is easy to see that there is a first-order formula that decides whether a given set $Y$ is an $\alpha$-th Turing jump of $X$.
Let $Y\restriction \gamma$ be a formula which restricts $Y$ to elements $x$ of the form $(\delta,z)$, for $\delta\le\gamma$, for example $x=(\delta,z)\land \d \le \gamma \land x \in Y$.
Now, we can write the formula stating that $Y=\TJ(X,\alpha)$ as follows 
$$\forall x\,((0,x)\in Y \leftrightarrow x\in X)\land
\forall \delta\le \a\forall x\,((\delta+1,x)\in Y \leftrightarrow \set{x}\sp{Y\restriction\delta}(0)\stops),$$
where we assume that $\set{x}\sp{Y\restriction\delta}(0)\stops$ is properly expressed.
Given a recursive ordinal notation system, the above formula is $\Pi^0_2$. In what follows, we will use the fact that $Y=\TJ(X,\alpha)$ is definable in first-order arithmetic.

\subsection{Largeness notions based on ordinals}\label{subsec: largeness}

We next define the crucial notion of largeness based on ordinals. 

\begin{definition}\label{defi:largeness}
Let $s \in [\N]^{< \om}$ and $\a < \G_0$.
We say that $s$ is \textit{$\a$-large} if $\a[s] = 0$, \textit{$\a$-small} if $\a[s] > 0$ and \textit{$\a$-size} (or exactly $\a$-large) if $s$ is $\a$-large but $s^*$ is $\a$-small.
\end{definition}

It is clear that for each $\a$ every infinite set has an initial segment which is $\a$-large. For $n \in \mathbb{N}$, a set $s\subseteq\N$ is $n$-size precisely when its cardinality is $n$, so that $[X]^{!n}$ coincides with $[X]^n$.

It was proved in \cite{marcone2025} that for each $s, t \in [\N]^{< \om}$ such that $s \subseteq t$ and each $\a < \G_0$, if $s$ is $\a$-large, then $t$ is $\a$-large, while if $s \subsetneq t$ and $t$ is $\a$-size, then $s$ is $\a$-small. Nestedness is crucial to ensure these properties.

For a set $X\subseteq \N$, we write $[X]^{!\alpha}$ to denote the set of subsets of $X$ which are $\alpha$-size and by $[X]^{! \ge \a}$ the set of subsets of $X$ which are $\a$-large.
As any $\alpha$-size set is finite, $[X]^{!\alpha}$ and $[X]^{! \ge \a}$ are coded as second-order objects, as in our base theory $\RCA$ we have the totality of the exponential function.

We also introduce a sum operation $\uplus$ between largeness notions.

\begin{definition}
We say that $s \in [\N]^{< \om}$ is $(\alpha \uplus \beta)$-large if $s$ can be partitioned in two parts $s_\beta < s_\alpha$ such that $s = s_\beta \conc s_\alpha$, $s_\beta$ is $\beta$-size and $s_\alpha$ is $\alpha$-large.
We may also say that $s$ is $(\alpha \uplus \beta)$-size if in addition $s_\alpha$ is $\alpha$-size.
\end{definition}

Given ordinals $\beta < \alpha$, an $\alpha$-large set need not be $\beta$-large. 
To address this issue, Ketonen and Solovay introduced in \cite{ketonensolovay} the relation $\Rightarrow_n$ and the norm function, and showed that these have well-behaved structural properties for ordinals below $\epsilon_0$. 
As a consequence, any $\alpha$-large set whose minimum exceeds a bound depending only on $\beta$ (called the norm of $\beta$) is guaranteed to be $\beta$-large.
We adapt these ideas to our system of fundamental sequences.
		
\begin{definition}\label{normdefinition}
Given $\a, \b < \G_0$ and $n\in\N$ we write $\a \Rightarrow_n \b$ to mean that $\b = \a[n]\cdots [n]$ for some number (possibly $0$) of $n$'s.

To each ordinal $\a$ we associate the natural number 
$$|\a| = \min \{n > 1 : \G_0 \Rightarrow_n \a \},$$ 
that we call the \textit{norm} of $\a$.
\end{definition}

Notice that we can always assume without loss of generality that $|\a| < |\a + 1|$ for each $\a < \G_0$.

The following is \cite[Proposition 2.12]{marcone2025}.
The proof needs nestedness of the system of fundamental sequences.

\begin{proposition}\label{prop: goodnorm}
For all $\b, \d < \G_0$, if $\b < \d$ then $\b \le \d[|\b|]$.
Moreover, if $s \in [\N]^{<\omega}$, $\d[s] \le \b$ and $\min s \ge |\b|$ then for some $t \sqsubseteq s$ we have $\d[t] = \b$.
Consequently, if $s$ is $\d$-large and $\min s \ge |\b|$ then $s$ is $\b$-large.
\end{proposition}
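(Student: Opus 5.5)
We prove the three assertions of Proposition~\ref{prop: goodnorm} in order, each resting on the previous one, with nestedness (Theorem~\ref{nested}) as the main tool.

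\emph{First assertion.} We argue by induction on $\d$. Let $n=|\b|$, so that $\G_0 \Rightarrow_n \b$, and let $\b<\d$. We compare the descending $\Rightarrow_n$-chain from $\G_0$ down to $\b$ with the position of $\d$.

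Suppose, towards a contradiction, that $\d[n] < \b$. Consider the chain
$$\G_0=\g_0 > \g_1=\g_0[n] > \dots > \g_m=\b.$$
Since $\g_0=\G_0 \ge \d$ and $\g_m=\b<\d$, we can pick the last index $i$ with $\g_i \ge \d$. Then $\g_{i+1}=\g_i[n]<\d$.

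If $\g_i=\d$, then $\g_{i+1}=\d[n]<\b$. This contradicts the fact that the chain decreases from $\g_{i+1}$ to $\b$.

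If $\g_i>\d$, then $\g_i[n] < \d$ and $\d[n]<\b\le \g_{i+1}$, so
$$\g_i > \d > \g_i[n] \ge \b > \d[n].$$
When $\g_i[n]>\d[n]$, nestedness applied to the pair $\d<\g_i$ forbids exactly the pattern $\d>\g_i[n]>\d[n]$. This pattern occurs here, which is a contradiction.

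The only delicate points are the boundary cases $n\le1$ (excluded, since the norm is defined with $n>1$) and the equality $\g_i[n]=\d[n]$, which is impossible because $\g_i[n]\ge\b>\d[n]$. So the first assertion follows directly from nestedness, with no induction really needed.

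\emph{Second assertion.} We use induction on $|s|$. Let $s=\langle s_0,\dots\rangle$ with $s_0\ge|\b|$ and $\d[s]\le\b$.

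If $\d\le\b$, then $\d[\emptyset]\le\b$. If $\d<\b$, the hypothesis $\d[s]\le \b$ is useless, so we need $\d\ge\b$ at each stage. We therefore prove the invariant: if $\d\ge\b$ and $\d[s]\le\b$, then some prefix $t$ hits $\b$ exactly.

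If $\d=\b$, take $t=\emptyset$. Otherwise $\b<\d$. A monotonicity fact gives $\d[s_0]\ge\d[|\b|]$, since fundamental sequences are non-decreasing. Combined with the first assertion, this yields $\d[s_0]\ge\b$. We then apply the induction hypothesis to $\d[s_0]$ and the tail $s^-$, whose minimum is still at least $|\b|$. This gives a prefix $t'$ of $s^-$ with $\d[s_0][t']=\b$, and we set $t=\langle s_0\rangle\conc t'$.

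The main obstacle is that the first assertion is stated with the argument $|\b|$, while the sequence uses $s_0\ge|\b|$. We handle this with the monotonicity of $n\mapsto\d[n]$, which we take as part of the definition of a fundamental sequence as non-decreasing.

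\emph{Third assertion.} This follows by taking $\d[s]=0\le\b$. The second assertion gives $t\sqsubseteq s$ with $\d[t]=\b$, so $\b[s\setminus t]=\d[s]=0$. Hence $s\setminus t$ is $\b$-large. Since $s\setminus t \subseteq s$ and largeness is upward closed under inclusion (the fact cited from \cite{marcone2025}), $s$ is $\b$-large.
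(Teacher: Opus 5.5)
Your proof is correct and follows essentially the route the paper relies on. The paper gives no argument of its own: it defers to \cite[Proposition 2.12]{marcone2025} and remarks that only nestedness is needed. Your derivation does exactly that. You obtain the first assertion from nestedness by locating where the $\Rightarrow_{|\b|}$-chain from $\G_0$ first drops below $\d$. You get the second by iterating the first, using that fundamental sequences are non-decreasing, and the third from the upward closure of largeness cited in the paper. You were also right to make explicit the hypothesis $\b \le \d$ in the second and third assertions, which the statement leaves implicit; it reappears as property (3) of $\Lambda$ in Section~\ref{sec:lower}.
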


We comment about the proofs of Theorem \ref{nested} and Proposition \ref{prop: goodnorm} in Remark \ref{technical lemmas} at the beginning of Section \ref{sec:lower}, after introducing all the technical machinery we need.

Note that the minimum in the definition of the norm (Definition \ref{normdefinition}) may not be recursive. However we can get a recursive norm by looking for the first $n$ that we notice gets the job done (which may not be the minimum in the standard ordering). Then the properties of the norm stated in Proposition \ref{prop: goodnorm} are still satisfied by this notion of norm. Thus we make no distinction between the two in what follows.

\subsection{Systems between $\ACA$ and $\ATR$}\label{ssec:systems}

A classification of subsystems of second-order arithmetic that lie strictly between $\ACA$ and $\ATR$ is obtained in \cite{marconemontalban}.
These systems can be characterized by an axiom stating the existence of the $\alpha$-th Turing jump of any given set for $\alpha < \G_0$ and are denoted by $\Pi^0_{\alpha} \CA$.
If $\alpha < \omega$ we simply obtain $\ACA$ while if $\alpha = \omega$ we get $\ACA^+$.
With the formalization of the Turing jump of Subsection \ref{subsec: turing jump} we define $\Pi^0_{\alpha} \CA$ as the theory
$$\RCA + \, \WO(\a) + \, \forall X \exists Y (Y = \TJ(X,\a)).$$

It is easy to see that in the context of reverse mathematics, only jumps corresponding to indecomposable ordinals are meaningful.
Let $\lead(\alpha)$ be the leading term in the Cantor normal form of $\alpha$.

\begin{lemma}\label{equivsystems}
For each $\alpha < \G_0$, $\Pi^0_{\alpha} \CA$ and $\Pi^0_{\lead(\alpha)} \CA$ are equivalent over $\RCA$.
\end{lemma}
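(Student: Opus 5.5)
We prove both directions over $\RCA$. Write $\a = \om^{\b} \cdot m + \g$ in Cantor normal form, where $\lead(\a) = \om^\b$, $m \ge 1$ and $\g < \om^\b$. Note that $\lead(\a) \le \a$, and that $\WO(\a)$ and $\WO(\lead(\a))$ are equivalent over $\RCA$. Indeed, $\lead(\a)$ is an initial segment of $\a$. Conversely, $\a$ embeds into a finite sum of copies of $\lead(\a)$ and $\g$, and a descending sequence in such a finite sum has a tail inside a single summand. This last fact uses only $\Sigma^0_1$-induction on the finitely many summands. So only the jump-existence axioms need comparing.

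\emph{From $\Pi^0_\a\CA$ to $\Pi^0_{\lead(\a)}\CA$.} Given $X$, take $Y = \TJ(X,\a)$. Then $Y \restriction \lead(\a)$ exists by $\Delta^0_1$ comprehension, and it satisfies the $\Pi^0_2$ formula defining $\TJ(X,\lead(\a))$. The defining clauses for $\d \le \lead(\a)$ only refer to $Y \restriction \d$, which is unchanged by the restriction. This direction is routine.

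\emph{From $\Pi^0_{\lead(\a)}\CA$ to $\Pi^0_\a\CA$.} This is the main step, and it has two parts.

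First, I would show that jumps can be composed. If $Z = \TJ(X,\d)$ and $W = \TJ(Z,\eta)$, then $\TJ(X,\d+\eta)$ is computable from $W$, uniformly, by recursively re-indexing levels. Concretely, a pair $(\d+\zeta+1,e)$ is put in iff $(\zeta+1,e') \in W$, where $e'$ is an index obtained by the $s$-$m$-$n$ theorem. The index $e'$ simulates $e$ by translating oracle queries about $\TJ(X,\d+\zeta)$ into queries about $W \restriction \zeta$, and the lower levels are answered from $Z = \TJ(X,\d)$, which sits in level $0$ of $W$.

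The verification that the resulting set satisfies the defining formula for $\TJ(X,\d+\eta)$ needs induction along $\eta$ on an arithmetical statement. This is the main obstacle: $\RCA$ has only $\Sigma^0_1$ induction, and transfinite induction along well-orderings for arithmetical formulas is not available in general. I would avoid it as follows. Both $W$ and the candidate set exist, so the statement "level $\zeta$ is correct" is $\Delta^0_1$ in those sets, with a uniform, $\Delta^0_1$ translation between levels. Hence the set of $\zeta \le \eta$ where correctness fails is a set, and $\WO(\eta)$ gives it a least element. Here I use that $\WO$ in the descending-sequence form of Definition \ref{wellorderingdef} implies that nonempty sets have least elements, by a standard $\RCA$ argument. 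The least failure is then refuted directly, using the limit and successor clauses.

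Second, I would iterate. Applying composition $m$ times to $\lead(\a)$, and once more with $\g \le \lead(\a)$, yields $\TJ(X,\a)$. Here $\TJ(Z,\g)$ is obtained as the restriction of $\TJ(Z,\lead(\a))$, as in the first direction. Since $m$ is a fixed standard number given by $\a$, this is finitely many applications of $\Pi^0_{\lead(\a)}\CA$ and needs no internal induction.
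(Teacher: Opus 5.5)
Your proposal is correct and follows the paper's route: restriction for the easy direction, and finitely many (standard) applications of the $\lead(\alpha)$-jump for the converse. The paper iterates up to $\lead(\alpha)\cdot n \ge \alpha$ and then restricts, whereas you iterate $m$ times and compose with a restricted jump; you also make explicit the transfer of $\WO$ and the composition of jumps, which the paper leaves implicit.

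One simplification: the transfinite induction you flag as the main obstacle is unnecessary. Each successor clause for the composed set follows directly from the corresponding clause for $W$ via the simulating index $e'$, with no appeal to correctness at lower levels. In any case, ``level $\zeta$ is correct'' quantifies over all indices $e$, so it is not $\Delta^0_1$ as you claim.
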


\begin{proof}
The left-to-right direction is trivial since $\alpha \ge \lead(\alpha)$ and models of $\RCA$ are closed under Turing reducibility.

For the right-to-left direction let $n$ be a standard natural number such that $\lead(\alpha) \cdot n \ge \alpha$.
Then $\Pi^0_{\lead(\alpha)} \CA$ proves the existence of the $(\lead(\alpha) \cdot n)$-th Turing jump and hence also of the $\alpha$-th Turing jump.
\end{proof}

In view of Lemma \ref{equivsystems} we only study the systems corresponding to $\omega^\alpha$-th Turing jumps for $\alpha < \Gamma_0$.
These subsystems of second-order arithmetic lie strictly between $\ACA$ and $\ATR$ and if $\alpha > \beta$ then $\Pi^0_{\omega^\a} \CA \vdash \Pi^0_{\omega^\b} \CA$.
It is easy to see that the hierarchy is strict.


%
%

\subsection{Ramsey theory}\label{ssec:ramsey theory}

For each $n,k \in \N$ the classical infinite Ramsey's theorem $\RT^n_k$ states that for each coloring $c$ of the $n$-size subsets of an infinite set in $k$ colors, there exists an infinite homogeneous set, namely a set such that all of its $n$-size subsets are mapped to the same color by $c$. 
A purely computability-theoretic analysis of $\RT^n_k$ is given in \cite{jockusch}: for all $n,k\geq 2$, $\RT^n_k$ admits $\Pi^0_n$ solutions and omits $\Sigma^0_n$ solutions; i.e.\ the $(n+1)$th jump of an instance can compute a solution while, for each $n\geq 2$, some computable instance of $\RT^n_2$ admits no $\Sigma^0_n$ solution. 
We refer to an instance of a Ramsey-like theorem to indicate a coloring of the appropriate type and to a solution of a Ramsey-like theorem to indicate a homogeneous set. 
Moreover some computable instance of $\RT^3_2$ {\em codes the jump} in the sense that all of its homogeneous sets compute the first Turing jump.

The above results translate to the framework of reverse mathematics as follows: for each $k\in \N$, $\RT^1_k$ is provable in $\RCA$ while for $n \ge 3$, $\RT^n_k$ is equivalent to $\ACA$. 
Ramsey's theorem for pairs $\RT^2_k$ lies strictly between $\RCA$ \cite{specker1971ramsey} and $\ACA$ \cite{seetapunslaman} and is independent from $\WKL$ \cite{Liu:12}.

We next introduce the generalizations of Ramsey's theorem to colorings of $\a$-size sets that are of central interest for the present paper. They are very close to theorems in \cite{Pud-Rod:82} and \cite{Far-Neg:08}. 

\begin{definition}
Let $\a$ a (notation for a) countable ordinal and let $k\geq 1$. 
$\RT^{!\alpha}_k$ is the following statement: for each infinite $X \subseteq \N$ and each coloring $c \colon [X]^{! \alpha} \to k$ there exists an infinite subset $H\subseteq X$ such that $c$ is constant on $[H]^{!\a}$. 
The set $H$ is called homogeneous or monochromatic for $c$.
\end{definition}

In the previous definition, largeness is understood in the sense of Subsection \ref{ssec:ordinals}, relative to the system of fundamental sequences on $\Gamma_0$ introduced in Definition \ref{system of fund seq}.

As usual in reverse mathematics it is routine to show that for each $\a < \G_0$ and each $j,k \in \N$, the statements $\RT^{! \a}_j$ and $\RT^{!\a}_k$ are equivalent over $\RCA$.

\section{Lower bound}\label{sec:lower}

In this section we establish a lower bound on the logical strength of the principles $\RT^{!\omega^\a}_k$. We give two different proofs. The first one proceeds by designing colorings of the $\alpha$-size subsets of $\N$ with hard homogeneous sets. The second one hinges on the recent \cite{marcone2025} and ideas from \cite{carlucci2025} and proceeds by showing an implication from Ramsey's theorem for $\alpha$-size sets to a well-ordering preservation principle of appropriate logical strength. 


We start with some preliminary lemmas that we need for some technical steps in the rest of the section.

\begin{lemma}\label{wellorder a}
For all $0 < \alpha < \Gamma_0$, for all $k\geq 2$, $\RCA+\RT_k^{! \om^\a} \vdash \WO(\om^\a)$.
\end{lemma}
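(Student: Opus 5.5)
We argue by contradiction inside a model of $\RCA+\RT^{!\om^\a}_k$. Suppose $f\colon\N\to\om^\a$ is an infinite strictly descending sequence, i.e.\ $f(i+1)<f(i)$ for all $i$; a non-strict descending sequence that never stabilizes can be thinned $\Delta^0_1$-definably to a strictly descending one. We use $f$ to define a coloring $c\colon[\N]^{!\om^\a}\to 2$ with no infinite homogeneous set. Since $\RT^{!\om^\a}_2$ and $\RT^{!\om^\a}_k$ are equivalent over $\RCA$, it suffices to refute $k=2$.

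The coloring compares the descending sequence with the fundamental-sequence descent. For an $\om^\a$-size set $s$, write $s=\langle s_0,\dots,s_{m}\rangle$ and track the ordinals $\om^\a[s_0\ldots s_{j}]$. Color $s$ red if $f(s_0)$ (or, more robustly, $f$ evaluated at a point determined by $s$, such as $f(\max s)$) is $\le$ the ordinal $\om^\a[s_0]$, and blue otherwise. The comparison is computable from $f$ because ordinals are given by a recursive notation system, so $c$ exists in $\RCA$.

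Let $H$ be an infinite homogeneous set. We derive a contradiction using Proposition~\ref{prop: goodnorm}. The key property is that, for each ordinal $\b<\om^\a$ and each $n\ge|\b|$, we have $\b\le\om^\a[n]$. Since $f(h)<\om^\a$ for every $h$, we can choose $h_0\in H$ with $h_0\ge|f(h)|$ for a suitable earlier $h$; this shows some $\om^\a$-size subset of $H$ is red. So $H$ must be red-homogeneous. To refute red homogeneity, we use the descent of $f$: take $s\subseteq H$ of size $\om^\a$ and compare with the $\om^\a$-size subset starting at a later element. The ordinals $f(h)$ strictly decrease along $H$, while the ordinals $\om^\a[h]$ increase with $h$ (monotonicity of fundamental sequences in the argument, which should follow from nestedness, Theorem~\ref{nested}). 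This gap alone does not yield a contradiction, so a cleverer coloring is needed.

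A cleaner route, and the one we would actually pursue, encodes the descent into the largeness itself. Define $c(s)=0$ if $f(\min s)$ is, in a precise sense, reached along the fundamental descent of $s$ (for example, $\om^\a[t]\le f(\min s)$ for some proper $t\sqsubseteq s$ with $t\ne\emptyset$), and $c(s)=1$ otherwise. Then we argue as follows.
\begin{enumerate}
\item Using Proposition~\ref{prop: goodnorm} (the ``moreover'' clause, which gives $t\sqsubseteq s$ with $\om^\a[t]=f(\cdot)$ once $\min s\ge|f(\cdot)|$), we show that homogeneity in color $1$ fails: choose a later element above the norm of the relevant $f$-value.
\item For color $0$, we use the infinite descent of $f$ along $H$, together with the fact that each $\om^\a$-size set passes through only finitely many ordinals, to construct an $\om^\a$-size $s\subseteq H$ where the descent never hits $f(\min s)$. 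This forces color $1$, a contradiction.
\end{enumerate}

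The main obstacle is step 2. We must choose the coloring so that exactly one color is refuted by norms and the other by the descending sequence. We must also carry out all arguments, especially the properties from Proposition~\ref{prop: goodnorm}, within $\RCA$, using only $\Sigma^0_1$ induction. The first attempt would adapt the Carlucci–Zdanowski argument for $\om$, where $\WO(\om)$ is trivial and the analogous step uses $f(s_0)$ versus $|s|$, replacing cardinality by position in the fundamental-sequence descent.
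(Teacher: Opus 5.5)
There is a genuine gap, and a cleverer coloring cannot close it.

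\textbf{Your coloring is constant.} If $\max s\ge 1$, the last positive stage of the descent is $\om^\a[s^*]=1$, since for $n\ge1$ the only $\g>0$ with $\g[n]=0$ is $\g=1$. A strictly descending $f$ never takes the value $0$. So $t=s^*$ always witnesses color $0$, your $c$ is constant on $\om^\a$-size sets with $\min s\ge1$, and step 2 cannot be carried out.

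\textbf{No coloring built from $f$ can work.} Given $f$ and an infinite $X$, let $H=\{h_0<h_1<\cdots\}\subseteq X$ be the $(f\oplus X)$-computable thinning with $h_{i+1}>|f(h_i)|$. Take $s=\{s_1<\cdots<s_m\}\subseteq H$ with $s_1>h_0$, and put $s_0=h_0$. Use the norm property from Proposition~\ref{prop: goodnorm} that you invoke: $\b<\d$ and $n\ge|\b|$ imply $\b\le\d[n]$. Then $\Delta^0_1$-induction on $j$ gives $f(s_{j-1})\le\om^\a[s_1\cdots s_j]$. The base uses $f(s_0)<\om^\a$ and $s_1>|f(s_0)|$; the step uses $f(s_j)<f(s_{j-1})$ and $s_{j+1}>|f(s_j)|$. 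If $s$ were $\om^\a$-large we would get $f(s_{m-1})=0$, contradicting $f(s_m)<f(s_{m-1})$. Hence $[H\setminus\{h_0\}]^{!\om^\a}=\emptyset$, and $H\setminus\{h_0\}$ is vacuously homogeneous for every coloring. Your steps of the form ``take $s\subseteq H$ of size $\om^\a$'' silently assume that $H$ has $\om^\a$-large subsets. That assumption is itself an instance of the well-foundedness you are trying to prove. For $\om$ it is free, and that is exactly where the analogy with Carlucci--Zdanowski breaks.

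\textbf{The paper's route.} The paper never argues from a descending sequence. By external induction on $\a$ it derives $\RT^{!\om^\b}_k$ for $\b<\a$ and applies the already established lower bound $\Pi^0_{\om^\b}\CA$. It then imports, via Marcone--Montalb\'an and Arai, that this system proves $\WO(\g)$ for every $\g<\varphi_{\b+1}(0)$, and picks $\b<\a$ with $\om^\a<\varphi_{\b+1}(0)$, which exists since $\a<\G_0$.

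\textbf{The obstruction also reaches the paper's argument.} Its reduction to $\RT^{!\om^\b}_k$ uses that every $\om^\b$-size $t\subseteq H$ extends to an $\om^\a$-size subset of $H$, which again presupposes that $H$ has $\om^\a$-large subsets. In fact, for $\a=\om$ the statement as formulated is not provable over $\RCA$ by any method:
\begin{enumerate}
\item $\RCA\nvdash\WO(\om^\om)$.
\item Take a norm exceeding all exponents and coefficients of the Cantor normal form. Then the norm property below $\om^\om$ is an elementary fact provable in $\RCA$.
\item So in any model of $\RCA+\neg\WO(\om^\om)$, the thinning above makes $\RT^{!\om^\om}_k$ vacuously true.
\item In the same model $\RT^{!\om}_k$ fails, since it implies $\ACA$ and $\ACA\vdash\WO(\om^\om)$.
\end{enumerate}
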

\begin{proof}
First we claim that for each $\b < \a$, for each $k\geq 2$, $\RCA+\RT_k^{! \om^\a} \vdash \RT_k^{! \om^\b}$.
Let $c \colon [X]^{! \om^\b} \to k$ be a coloring and let $n$ be the norm of $\om^\b$ (recall Definition \ref{normdefinition}).
By Proposition \ref{prop: goodnorm} every $\om^\a$-large set $s$ with $\min s > n$ is also $\om^\b$-large.
Let $d \colon [X \setminus \{0, \ldots, n\}]^{! \om^\a} \to k$ be the coloring defined as $d(s) = c(t)$ where $t$ is the unique $\om^\b$-size initial segment of $s$.
Let $H\subseteq\N$ be an infinite homogeneous set for $d$ of color $j$ for some $j < k$.
We show that $H$ is homogeneous for $c$: if $t \in [H]^{! \om^\b}$ then by definition $c(t) = d(s) = j$ where $s$ is any extension of $t$ to an $\om^\a$-size set in $H$.

We know by inductive hypothesis that for each $\b < \a$, $\RCA+\RT_k^{!\om^\b} \vdash \Pi^0_{\om^\b} \CA$.
By \cite[Theorem 6.16]{marconemontalban}, the system $\Pi^0_{\om^\b} \CA$ is equivalent to a well ordering principle.
Applying \cite[Theorem 1.5]{Arai2020} we get that the proof-theoretic ordinal of this axiom system is exactly $\varphi_{\b+1} (0)$.
We claim that there exists $\b < \a$ such that $\a < \varphi_{\b+1}(0)$.
If not then $\a \geq \varphi_{\b+1}(0)$ for each $\b < \a$ and so $\a \ge \lim_{\b \to \a} \varphi_{\b+1}(0) = \varphi_\a(0)$.
However this contradicts $\a < \G_0$, which is the least $\g$ such that $\g = \varphi_\g(0)$.

Let $\b < \a$ be large enough so that $\om^\a < \varphi_{\b+1}(0)$.
Then we get that $\RT_k^{!\om^\a}$ proves the well-foundedness of every ordinal below $\varphi_{\b+1}(0)$ and in particular it proves that $\om^\a$ is a well-order.
\end{proof}

\begin{corollary}\label{induction}
For all $\a<\G_0$, for all $k\geq 2$, $\RCA+\RT^{! \om^\a}_k$ proves arithmetical transfinite induction over $\om^\a$.
\end{corollary}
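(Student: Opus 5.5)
The plan is to derive arithmetical transfinite induction along $\om^\a$ from the well-foundedness established in Lemma \ref{wellorder a}, together with the fact that $\RT^{!\om^\a}_k$ already gives closure under arithmetical comprehension.

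\textbf{Step 1: $\RT^{!\om^\a}_k$ implies $\ACA$.} For $\a>0$, every $\om^\a$-large set $s$ with $\min s\ge|3|$ is $3$-large, by Proposition \ref{prop: goodnorm}. Exactly as in the first paragraph of the proof of Lemma \ref{wellorder a}, take a coloring $c$ of $[X]^3$ and color each $\om^\a$-size set above the norm by its $3$-element initial segment. A homogeneous set for this coloring is homogeneous for $c$. Hence $\RT^{!\om^\a}_k\vdash\RT^3_k$, and so it proves $\ACA$. The case $\a=0$ is trivial, since $\om^0=1$ and induction along $1$ is immediate.

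\textbf{Step 2: the standard reduction from well-foundedness to induction.} Let $\varphi(\g)$ be arithmetical, with set parameters, and suppose the progressiveness hypothesis holds: $\forall\g<\om^\a\,(\forall\d<\g\,\varphi(\d)\to\varphi(\g))$. Assume toward a contradiction that $\neg\varphi(\g_0)$ for some $\g_0<\om^\a$.
\begin{itemize}
\item By arithmetical comprehension, available from Step 1, form the set $Y=\{\g<\om^\a:\neg\varphi(\g)\}$. It is nonempty.
\item $\WO(\om^\a)$ from Lemma \ref{wellorder a} is stated in terms of descending sequences, not least elements. So build a descending sequence in $Y$ recursively, computably in $Y$ and the (computable) ordering. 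Put $f(0)=\g_0$. If $f(i)\in Y$, progressiveness gives some $\d<f(i)$ with $\d\in Y$; let $f(i+1)$ be the numerically least such $\d$. This is a primitive recursion relative to $Y$, so $f$ exists in $\RCA$.
\item The resulting $f$ is strictly decreasing forever, which contradicts $\WO(\om^\a)$.
\end{itemize}

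Since the argument runs uniformly for every arithmetical $\varphi$, we obtain the full scheme. The step needing the most care is Step 1: the paper has not yet recorded that $\RT^{!\om^\a}_k$ gives $\ACA$. The norm-based initial-segment coloring from Lemma \ref{wellorder a} settles this directly. Alternatively, one can appeal to the inductive hypothesis $\RT^{!\om^\b}_k\vdash\Pi^0_{\om^\b}\CA$ for some $\b<\a$, which is used in that same proof.
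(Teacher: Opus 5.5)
Your proof is correct and follows the paper's route: the paper combines $\WO(\om^\a)$ from Lemma \ref{wellorder a} with $\ACA$, and cites Exercise 5.13.19 of Dzhafarov--Mummert for the fact that $\ACA$ proves arithmetical transfinite induction along any well-order, which is exactly what your Step 2 proves by hand. The one difference is that the paper obtains $\ACA$ from the external inductive hypothesis, whereas your reduction to $\RT^3_k$ via the initial-segment coloring obtains it directly, which is slightly more self-contained. (A small precision: the search for the numerically least $\d$ is unbounded, so this is not a primitive recursion; totality of $f$ instead follows from $\Sigma^0_1$-induction relative to $Y$, which $\RCA$ provides.)
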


\begin{proof}
By \cite[Exercise 5.13.19]{dzhafarovmummert} $\ACA$ proves arithmetical transfinite induction over any well-order.
By Lemma \ref{wellorder a} $\RT^{! \om^\a}_k$ proves that $\om^\a$ is a well-order and by inductive hypothesis it proves $\ACA$ too. 
\end{proof}

\begin{remark}\label{technical lemmas}
As promised at the end of Subsection \ref{ssec:ordinals}, we also comment about the proofs of Theorem \ref{nested} and Proposition \ref{prop: goodnorm}.
The following results can be proved in $\RT^{! \om^\a}_k$ if we restrict the system of fundamental sequences to $\om^\a$, which suffices for us.
\begin{enumerate}
    \item Theorem \ref{nested} states that the system of fundamental sequences of Definition \ref{system of fund seq} is nested.
    Since we only need fundamental sequences up to $\om^\a$, we only need to prove nestedeness on the system induced on $\om^\a$.
    The proof is \cite[Theorem 2.16]{marcone2025}, and uses only arithmetical induction over $\om^\a$, which is available in $\RT^{! \om^\a}_k$ by Corollary \ref{induction}.
    \item Proposition \ref{prop: goodnorm} is proved in \cite[Proposition 2.12]{marcone2025} which only uses nestedness of the system.
    By Theorem \ref{nested} and by what we just noticed, the argument can be carried out in $\RT^{! \om^\a}_k$.
\end{enumerate}
\end{remark}

Before establishing the lower bound, we show that we can restrict ourselves to the study of indecomposable well-orders.

\begin{lemma}[$\RCA$]
$\RT^{! \alpha}_k \vdash \RT^{! \lead(\alpha)}_k$.
\end{lemma}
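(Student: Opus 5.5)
We reduce a coloring of $\lead(\alpha)$-size sets to a coloring of $\alpha$-size sets, as in the first paragraph of the proof of Lemma \ref{wellorder a}. Write $\alpha = \omega^{\alpha_1} + \cdots + \omega^{\alpha_m}$ in normal form, so $\lead(\alpha) = \omega^{\alpha_1}$ and $\alpha = \a_0 + \omega^{\alpha_m}$ with $\a_0 \ggeq \omega^{\alpha_m}$. By clause (1) of Definition \ref{system of fund seq}, $\alpha[n] = \a_0 + (\omega^{\alpha_m}[n])$. Since $\omega^{\alpha_m}$ is indecomposable, iterating this shows that as long as the tail ordinal is positive, the head $\alpha_0$ is untouched. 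Consequently every $\alpha$-size set $s$ splits uniquely as $s = s' \conc s''$, where $s'$ is the shortest initial segment with $\alpha[s'] = \a_0$ (so $s'$ is "$\omega^{\alpha_m}$-size") and $s''$ is $\a_0$-size. Iterating over the $m$ terms, $s$ is a concatenation of blocks that are $\omega^{\alpha_m}$-size, then $\omega^{\alpha_{m-1}}$-size, and so on, ending with a final block that is $\omega^{\alpha_1}$-size. So an $\alpha$-size set is an $(\omega^{\alpha_1}\uplus \cdots \uplus \omega^{\alpha_m})$-size set in the paper's notation, with the leading term last. The first thing I would verify, in $\RCA$ since all finite computations are involved, is this decomposition, in particular that the $\a_0$ head really is preserved, using $\a_0 \ggeq \omega^{\alpha_m}$.

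Given $c\colon [X]^{!\lead(\alpha)} \to k$, I define $d\colon [X]^{!\alpha}\to k$ by $d(s) = c(u)$, where $u$ is the last block of the decomposition, which is $\lead(\alpha)$-size. This $d$ is computable from $c$. Apply $\RT^{!\alpha}_k$ to get an infinite $H \subseteq X$ homogeneous for $d$ with color $j$. Given $u \in [H]^{!\lead(\alpha)}$, we cannot always extend it downward inside $H$, since the elements of $H$ below $\min u$ may not form the needed initial blocks. To fix this, I would first thin $H$: computably choose $H' \subseteq H$ such that between consecutive chosen elements there are enough elements of $H$. Concretely, take $H'$ to be every other element of $H$. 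Each $u\in[H']^{!\lead(\alpha)}$ is then preceded in $H$ by at least $\min u$-many unused elements, although this count is not yet connected to the blocks needed.

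The cleaner route, and the main obstacle, is to handle the initial blocks explicitly. Given $u \subseteq H'$, set $p = \min u$. We need a set $v \subseteq H$ with $v < u$ that is $\a_0'$-size, where $\a_0'$ is the sum of the non-leading terms. In addition, gluing $v$ in front of $u$ must leave the ordinal $\alpha[v] = \lead(\alpha)$ unchanged, and then $u$ is processed as $\lead(\alpha)$-size. The issue is that the minimum of $v$ affects the lengths of the blocks, so an arbitrary $v$ below $u$ might not fit. I would instead define $d$ differently, so that it does not require prefixes. Let $d(s) = c(t)$, where $t$ is the $\lead(\alpha)$-size initial segment of $s$. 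This exists because $\alpha$-large implies $\lead(\alpha)$-large for sets with large minimum: $\lead(\alpha) \le \alpha$, so Proposition \ref{prop: goodnorm} applies to $s$ with $\min s \ge |\lead(\alpha)|$, after restricting $X$ above this norm. Then every $t\in[H]^{!\lead(\alpha)}$ extends inside the infinite set $H$ to an $\alpha$-large set, and hence to an $\alpha$-size one, since $\alpha$-largeness of initial segments is upward monotone. This gives $c(t) = j$. This is exactly the argument of Lemma \ref{wellorder a}, and the only facts needed are Proposition \ref{prop: goodnorm} and the monotonicity of largeness. So I would adopt this simpler version and treat the block decomposition only as a fallback.
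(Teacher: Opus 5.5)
Your final argument is correct and is essentially the paper's: color each $\alpha$-size set by its unique $\lead(\alpha)$-size initial segment, then transfer homogeneity by end-extending inside $H$. The only real difference is how you show that $\alpha$-size sets are $\lead(\alpha)$-large. The paper gets this from the block decomposition (the final block is $\lead(\alpha)$-size) together with monotonicity of largeness under inclusion, so it needs no restriction of $X$ above $|\lead(\alpha)|$; you instead invoke Proposition~\ref{prop: goodnorm}, as in Lemma~\ref{wellorder a}.

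Incidentally, your abandoned last-block coloring also works, using the step the paper takes of discarding the $\alpha'$-size initial segment $h$ of $H$ (where $\alpha=\lead(\alpha)+\alpha'$). Since $\alpha[h]=\lead(\alpha)$, the set $h\conc u$ is $\alpha$-size with last block $u$ for every $u\in[H\setminus h]^{!\lead(\alpha)}$, so your worry that a prefix below $u$ might not fit does not arise.
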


\begin{proof}
If $\alpha$ is indecomposable then the statement is trivial, so suppose $\alpha > \lead(\alpha)$ and let $\a' < \a$ be such that $\alpha = \lead(\alpha) + \a'$.
Let $c \colon [X]^{! \lead(\alpha)} \to k$.
Notice that by Definition \ref{system of fund seq} each $\alpha$-size set $t$ has a $\lead(\alpha)$-size subset, and so $t$ is $\lead(\alpha)$-large.
Hence $t$ must have a unique $\lead({\alpha})$-size initial segment $s$.  
Consider the coloring $d \colon [X]^{! \alpha} \to k$ defined by $d(t) = c(s)$ where $s$ is the unique $\lead(\alpha)$-size initial segment of $t$.
Then $d$ is a $(c \oplus X)$-computable instance of $\RT^{! \alpha}_k$.
Let $H\subseteq\N$ be an infinite homogeneous set for $d$ and let $h$ be its $\alpha'$-size initial segment.
Then it is straightforward to verify that $H' = H \setminus h$ is an infinite homogeneous set for $c$.
\end{proof}

\subsection{Coding Turing jumps into homogeneous sets}

%
%


In this subsection we work with a generic $\Lambda \le \G_0$ and consider the system of fundamental sequences of Definition \ref{system of fund seq} when restricted to $\Lambda$.
Then we prove the required lower bound in Corollary \ref{lowboundtj}.

We assume we fixed a recursive ordinal notation for $\Lambda$.  
We henceforth refer to ordinals with respect to this notation. 
We use $\leqo,\leo$, etc.\ to refer to the ordering with respect to $\Lambda$.
We write $\alpha\pluso 1$ to denote the successor of $\alpha$ in $\Lambda$, if it exists. 
Similarly, we write $\alpha\pluso n$, for $n\in\N$. 
We write $n\sp{\Lambda}$ to denote the $n$-th element of $\Lambda$, for $n\in \N$.

We want to stress that we see elements of $\Lambda$ in two ways: one related to their position in the ordering $\leqo$ and the other one as natural numbers. 
That is $\a \leo \beta$ means that $\a$ is less than $\beta$ in $\Lambda$ while $\a < \beta$ means that the integer coding $\a$ is less than the integer coding $\beta$. 
Occasionally, when we want to stress this second reading we write $\gd\alpha <\gd\beta$. 
The brackets are to remind that we think about $\alpha$ and $\beta$ as natural numbers. 
However, this is just mnemonic as, in a model of arithmetic, every first order element is a number.
Sometimes we distinguish the ordering we use in a given operation. 
So we write for instance $\min\sb{\leq}$ or $\min\sb{\leqo}$.


We assume that the set of successors in $\Lambda$ is recursive and that a successor of any $\alpha\in\Lambda$ has code greater than $\alpha$, that is $\gd\alpha<\gd{\alpha\pluso1}$.
This ensures that computing a predecessor of an ordinal $\alpha\pluso 1$ is recursive. 
To find a predecessor of $\alpha\pluso 1$ it suffices to find the greatest ordinal below $\alpha\pluso 1$ among numbers $\set{0,\dots,\alpha\pluso 1}$.

From now on we assume that the notion of fundamental sequences is fixed and defined as in Definition \ref{system of fund seq}. 
Let us list all the assumed properties of the notation~$\Lambda$.
\begin{enumerate}
\item $\Lambda$ has a recursive set of successors ordinals,
\item $\gd\alpha<\gd{\alpha\pluso1}$,
\item if $\b \leo \d$, $s \in [\N]^{<\omega}$, $\d[s] \leqo \b$ and $\min s \ge |\b|$ then for some $t \sqsubseteq s$ we have $\d[t] = \b$.
\end{enumerate}

The next step is to define colorings with homogeneous sets of high complexity in the hyperarithmetical hierarchy.
First we define a computable sequence of machines $M_\alpha(y,s)$ for $\a \in \Lambda$, and $s$ a $\alpha$-size sequence. 
We next define a computable sequence of computable colorings $\isfunc{c_\alpha}{[\N]^{!\alpha}}{2}$.
The whole construction is implicitly relativized to an oracle $A$. 
As the oracle $A$ is fixed, we usually omit it in our notation and we write, for instance, $M(x)$ instead of $M\sp A(x)$, for a given machine $M$.

We recall that $\{e\}^X(x)\stops_n$ means that the computation of machine $e$ on input $x$ with oracle $X$ terminates and is coded below $n$. 
This is taken to imply that the use of the computation is also strictly bounded by $n$.

The main property of the construction is that machines $M\sb\alpha(y,s)$ compute in the sense of  Clote's generalized limit \cite{clote1986generalization} the $\alpha$-th Turing jump of $A$. 
However, Clote's generalized limit is not first-order expressible for $\alpha\geqo \omega$. 
The colorings $c\sb\alpha$, for $\alpha\in\Lambda$, are defined in such a way that it is possible to use their \homo\ sets instead of generalized limits. 
Moreover, all this will be provable in a suitable theory of arithmetic.



\begin{remark}\label{pairingfunctionproperty}
We fix a computable pairing function $\langle \cdot, \cdot\rangle : \N\times \N\to \N$. 
Notice that we can assume without loss of generality that $|\gamma| \le \la \gd\g,z \ra$ for all $\gamma \in \Lambda$ and $z\in\N$. 
We write $\langle \g, z\rangle$ for $\langle \gd\g, z\rangle$.
\end{remark}

The first machine $M\sb1(y,x\sb0)$ is defined as follows.
$$
M\sb1(y,x\sb0)=
\begin{cases}
\mbox{Accept} & \mbox{ if } y=\langle 0,z\rangle \mbox{ and } y<x\sb0 \mbox{ and } z\in A,\\
\mbox{Reject} &\mbox{otherwise.}
\end{cases}
$$
The machine $M\sb2(y,\langle x\sb0,x\sb1\rangle)$ is defined as follows, where $x\sb0 < x\sb1$.
$$
M\sb2(y,\langle x\sb0,x\sb1\rangle)=
\begin{cases}
\mbox{Accept} &\mbox{if }y=\langle 0,z \rangle \mbox{ and } 
                                M\sb1(y,x\sb1)\mbox{ accepts,}\\
\mbox{Accept} &\mbox{if } y=\la1,z\ra\mbox{ and }y<x\sb0 \mbox{ and }\set{z}\sp Y(0)\stops\sb{x\sb1} \\
               &\mbox{where } Y=\{y<x\sb 1\colon M\sb0(z,x\sb1)\mbox{ accepts}\},\\
\mbox{Reject} &\mbox{otherwise.}
\end{cases}
$$
The intuition behind the definition of $M\sb{\alpha\pluso1}$ for the successor case is that $M\sb{\alpha\pluso1} (\langle\alpha\pluso1, z\rangle,x\sb0, s)$ accepts, where $\langle x_0\rangle \conc s$ is $(\alpha\pluso1)$-size, if machine $z$ stops below $\min s$ with input $0$ and an oracle $Y_\alpha^s$ computed by $M\sb\alpha$. 
This oracle is intended to approximate the $\alpha$-th jump of $A$. 
Thus, $M\sb{\alpha\pluso1}$ is defined for an $(\alpha\pluso1)$-size sequence $\langle x\sb0\rangle \conc s$  as follows.
$$
M\sb{\alpha\pluso1}(y,\langle x\sb0\rangle \conc s)=
\begin{cases}
\mbox{Accept} &\mbox{if }y=\langle\gamma,z\rangle\mbox{ and } \gamma\leqo \alpha \mbox{ and }
              M\sb\alpha(y,s)\mbox{ accepts,}\\
\mbox{Accept} &\mbox{if }y=\langle\alpha\pluso1,z\rangle\mbox{ and } y<x\sb0 \mbox{ and }
                                    \set{z}\sp {Y^s_\alpha} (0)\stops\sb{\min s},\\
               &\mbox{where } Y^s_\alpha =\{ y<\max s\colon M\sb\alpha(y,s)\mbox{ accepts}\},\\
\mbox{Reject} &\mbox{otherwise.}
\end{cases}
$$
For limit ordinals $\lambda \in \Lambda$ we define
$$
M_\lambda(y,\langle x\sb0\rangle \conc s)=M_{\lambda[x\sb0]}(y,s).
$$

Note that for all $\alpha\in\Lambda$, machine $M_\alpha$ receives an input of the form $\langle y, s\rangle$ where  $s$ is an $\alpha$-size sequence and $y< \min s$.
We implicitly stipulate that machine $M\sb\alpha$ rejects on inputs of different type. 

We next state some properties of machines $M\sb\alpha$. We assume that $\Lambda$ is a recursive notation for a linear ordering and that $\alpha\in\Lambda$. 
If we require $\Lambda$ to be a well ordering we state it explicitly.

First, let us remark that all machines $M\sb\alpha$ are total by construction.

\begin{lemma}[$\RCA+\WO(\Lambda)$]
For each $\alpha\in\Lambda$, $M\sb\alpha$ is total.
\end{lemma}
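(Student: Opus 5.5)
We prove totality by transfinite induction on $\alpha\in\Lambda$, which is available from $\WO(\Lambda)$. The statement ``$M_\alpha$ halts on every input'' is $\Pi^0_2$, hence arithmetical. In $\RCA$ we do not have arithmetical transfinite induction for free, so the plan has two parts. First we observe that the run of $M_\alpha$ on $(y,s)$ is a finite recursion: it calls $M_{\beta}$ only on proper final segments of $s$. Second we argue by ordinary induction on $|s|$ along with a well-foundedness argument. This avoids needing arithmetical induction over $\Lambda$.

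First, describe the recursion uniformly. Given input $(y,s)$ with $s=\langle x_0\rangle\conc s'$, the machine determines from the code of $\alpha$ whether $\alpha$ is $0$, a successor, or a limit. This is decidable because the set of successors is recursive, and the predecessor is computable by assumption (2).
\begin{itemize}
\item In the successor case $\alpha=\beta\pluso1$, the machine calls $M_\beta(y',s')$ for finitely many $y'<\max s'$ to build $Y^{s'}_\beta$. It then runs $\{z\}^{Y^{s'}_\beta}(0)$ for boundedly many steps, which is bounded by $\min s'$.
\item In the limit case it passes to $M_{\alpha[x_0]}(y,s')$.
\end{itemize}
In every case the recursive calls are on the strictly shorter sequence $s'$. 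Hence the whole computation tree has depth at most $|s|$, finite branching, and bounded local work. Inputs of the wrong shape are rejected immediately, since being $\alpha$-size is decidable by computing $\alpha[s]$ and $\alpha[s^*]$.

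Second, formalize this as a single primitive-recursive-style definition. We define by $\Sigma^0_0$/primitive recursion on $n=|s|$ the finite table $T_n$ of outcomes of all calls $M_\gamma(y,t)$ with $|t|\le n$ and $t,y,\gamma$ below a bound determined by the top-level input. Then ordinary $\Sigma^0_1$-induction in $\RCA$ on $n$ shows that this table exists, and hence that the computation halts. The bound on the codes of the ordinals appearing is the delicate part. The ordinals called are $\alpha[t]$ or predecessors thereof, computed along $s$, so they are obtained by finitely many recursive operations from $\alpha$ and $s$. It therefore suffices to build the table along the actual call tree rather than over all $\gamma$.

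The main obstacle is to check that each call is on a genuinely $\gamma$-size final segment and that the recursion is well-defined. If $\langle x_0\rangle\conc s'$ is $(\beta\pluso1)$-size, then $s'$ must be $\beta$-size, since $(\beta+1)[x_0]=\beta$ in the system of Definition \ref{system of fund seq}. In the limit case $s'$ is $\lambda[x_0]$-size. Both follow directly from the definition of $\alpha$-size via $\alpha[s]$. Here $\WO(\Lambda)$ is used only to guarantee that $\alpha[s]$ reaching $0$ is the intended termination behaviour. The actual termination argument rests on the decreasing length of $s$, so I expect $\WO(\Lambda)$ to be needed only mildly.
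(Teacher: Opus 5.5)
Your proof is correct and matches the paper, which gives no argument beyond remarking that the $M_\alpha$ are total by construction. Your recursion on the length of the tail $s'$, with the call path fixed by the ordinals $\alpha[t]$ for $t \sqsubseteq s$, bounded local work, and the table built along that path by $\Sigma^0_1$-induction, is exactly the unpacking of that remark. The one inconsistency is that you open by announcing transfinite induction on $\alpha$, which your actual argument rightly never uses; your argument in fact needs no appeal to $\WO(\Lambda)$ at all, not even the ``mild'' use you suggest.
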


\begin{lemma}[$\RCA+\WO(\Lambda)$]\label{lem:from:alpha:to:beta}
Let $\beta, \alpha\in\Lambda$ with $\beta\leqo \alpha$.
Let $t$ be a $\beta$-size sequence and let $s$ be such that $s < t$ and $\alpha[s]=\beta$.
Then 
\begin{multline*}
\set{\langle\gamma,z\rangle\colon M\sb\beta(\langle\gamma,z\rangle,t)\mbox{ accepts and }\gamma\leqo \beta } =\\
\set{\langle\gamma,z\rangle\colon M\sb\alpha(\langle\gamma,z\rangle, s \conc t)\mbox{ accepts and }\gamma\leqo \beta }.
\end{multline*}
\end{lemma}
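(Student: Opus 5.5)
We argue by induction on the length of $s$, i.e.\ on the number of fundamental-sequence steps needed to pass from $\alpha$ to $\beta$. Since $s$ is finite and $\alpha[s]=\beta$, we can write $s=\langle x_0\rangle\conc s'$ with $\alpha[x_0][s']=\beta$. The induction is a $\Sigma^0_1$ (indeed $\Delta^0_1$, since all $M_\gamma$ are total) induction on $|s|$, formalizable in $\RCA$. We prove the slightly more general statement: for all $\delta$, all $s$ with $\delta[s]=\beta$ and $s<t$, and all $\langle\gamma,z\rangle$ with $\gamma\leqo\beta$, we have that $M_\delta(\langle\gamma,z\rangle,s\conc t)$ accepts iff $M_\beta(\langle\gamma,z\rangle,t)$ accepts.

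\emph{Base case.} If $|s|=0$ then $\alpha=\beta$ and the two sets coincide trivially.

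\emph{Inductive step.} Let $s=\langle x_0\rangle\conc s'$ with $\alpha[x_0][s']=\beta$ and $s'<t$. First observe that $s\conc t$ is $\alpha$-size: $\alpha[s\conc t]=\beta[t]=0$, while $\alpha[(s\conc t)^*]=\beta[t^*]>0$ (if $t$ is empty, then $\beta=0$, which is harmless). So $M_\alpha$ is indeed applied to a legitimate input. We split according to the form of $\alpha$.
\begin{enumerate}
\item \emph{$\alpha$ a limit.} By definition $M_\alpha(y,\langle x_0\rangle\conc s'\conc t)=M_{\alpha[x_0]}(y,s'\conc t)$, and the induction hypothesis applied to $\alpha[x_0]$ and $s'$ gives the claim directly.
\item \emph{$\alpha=\alpha'\pluso1$.} Here $\alpha[x_0]=\alpha'$. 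The fundamental sequence satisfies $\alpha'[s']=\beta\leqo\alpha'$, so every $\gamma\leqo\beta$ satisfies $\gamma\leqo\alpha'$. For such $\gamma$, the first clause of $M_{\alpha'\pluso1}$ says that $M_\alpha(\langle\gamma,z\rangle,\langle x_0\rangle\conc s'\conc t)$ accepts iff $M_{\alpha'}(\langle\gamma,z\rangle,s'\conc t)$ accepts. The second clause concerns only $\gamma=\alpha'\pluso1$, which exceeds $\beta$ and so is excluded. Now apply the induction hypothesis to $\alpha'$ and $s'$.
\item \emph{$\alpha=0$.} This case cannot occur with $|s|>0$ unless $\beta=0$. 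With the convention $0[n]=0$ we have $\beta=0$, and the sets in question contain only $\gamma=0$; membership is then decided by whichever clause applies. We handle this boundary case separately, noting that the empty sequence is $0$-size, so $s$ must be empty.
\end{enumerate}

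\emph{Main obstacle.} The delicate point is that the inputs are only meaningful when the sequence is the right size, and $M_\alpha$ by stipulation rejects on ill-typed inputs. So each inductive step must verify that $s'\conc t$ is $\alpha[x_0]$-size, which follows as above from $\alpha[x_0][s'\conc t]=\beta[t]$. A second point to check is the side condition $y<\min(s\conc t)$ built into the input format, since the lemma quantifies over all $\langle\gamma,z\rangle$. Because $\min(s'\conc t)\ge\min(s\conc t)$, any $y$ admissible for the longer sequence is admissible for the shorter one. For the converse direction, i.e.\ when $M_\beta(\langle\gamma,z\rangle,t)$ accepts but $y\ge\min s$, we need that both sides reject or accept consistently. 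I expect to resolve this by reading the machines' definitions so that acceptance of $\langle\gamma,z\rangle$ at level $\beta$ is ultimately decided by a base clause containing a bound of the form $y<x_0$. If this turns out not to settle the matter, I would restrict the lemma to $y<\min s$, which is how it is used later.

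Finally, well-foundedness ($\WO(\Lambda)$) is needed only to guarantee totality of the machines, so that the equivalences above are $\Delta^0_1$ and the induction on $|s|$ is available in $\RCA$.
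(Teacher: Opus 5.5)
Your argument is correct, under the same reading of the machine definitions that the paper's proof uses. It is the same basic idea as the paper's: unwind the recursive clauses along the fundamental-sequence path. You organize it more economically, though.

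The paper first asserts that if $\langle\gamma,z\rangle$ is accepted by either machine, then $\gamma=\beta[t_0]\cdots[t_{i-1}]$ for some $i$. It then reduces \emph{both} $M_\beta(\langle\gamma,z\rangle,t)$ and $M_\alpha(\langle\gamma,z\rangle,s\conc t)$ to $M_\gamma(\langle\gamma,z\rangle,\langle t_i,\dots,t_k\rangle)$. You instead strip $s$ one element at a time and stop at $\beta$. Since $\gamma\leqo\beta\leqo\alpha[x_0]$ at every step, only the limit clause and the first successor clause are ever used. So you never unwind inside $t$, and you need no information about where $\gamma$ sits on the path through $t$.

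Two minor points:
\begin{itemize}
\item As phrased, your inductive statement has unbounded universal quantifiers over $\delta$ and $z$, so it is $\Pi^0_1$, not $\Sigma^0_1$. This induction is still available in $\RCA$. Alternatively, fix $\alpha,s,y$ and induct on the position in $s$.
\item The case $\alpha=0$ arises only when $\beta=0$. That is a degenerate case ($t$ empty) best excluded outright, as the paper tacitly does. Your claim that $s$ must then be empty is unjustified: $\alpha[s]=0$ does not force $s$ to be minimal.
\end{itemize}

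Your worry about the side condition $y<\min(s\conc t)$ is legitimate, and the paper's proof does not address it. However, the resolution you expect would not work. Unwinding $M_\beta(\langle\gamma,z\rangle,t)$ yields only a bound $y<t_i$ with $t_i$ an element of $t$, hence above $\max s$.

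The consistent reading, and the one the paper's proof and its later uses rely on, is that the machines are evaluated purely by their clauses, with no separate rejection of $y\ge\min$ of the input sequence. Under that reading your induction proves the lemma as stated, since each step is an identity of clauses.

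If the ``reject ill-typed inputs'' stipulation is instead enforced, the equality can fail for $\min s\le y<\min t$. Your fallback restriction to $y<\min s$ is then the right repair. But that is not exactly how the lemma is used later: \eqref{eq:Yb:Ya} is stated for all elements up to $\max t$, and the proof of Lemma~\ref{lem:lem:eq:homo:wit} invokes the lemma below $a_{k_\gamma}$ rather than below $a_0$.
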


\begin{proof}
Let $\beta,\alpha\in\Lambda$, $s$ and $t=\langle t\sb0,\dots,t\sb k\rangle$ as in the lemma and let $\gamma\leqo\beta$.
If $\langle\gamma,z\rangle$ is accepted by $M\sb\beta(\langle\gamma,z\rangle,t)$ or by$M\sb\alpha(\langle\gamma,z\rangle,s \conc t)$ then there exists $i\leq k$ such that $\gamma=\beta[t\sb 0]\cdots [t\sb {i-1}]= \alpha[s][t\sb 0]\cdots [t\sb{i-1}]$.
So, we have the following equivalences, 
\begin{align*}
M\sb\beta(\langle\gamma,z\rangle,t)\mbox{ accepts} & \ifff M\sb\gamma(\langle\gamma,z\rangle,\langle t\sb i,\dots, t\sb k\rangle)\mbox{ accepts}\\
&\ifff M\sb\alpha(\langle\gamma,z\rangle,s \conc t)\mbox{ accepts}.
\end{align*}
This proves the lemma.
\end{proof}

We now define the colorings $c_\alpha$, for $\alpha \geq \omega$. For $\alpha = \omega$ the construction is close to the one in~\cite{carlucci2014}.

\begin{definition}\label{def:colorings}
For $\alpha \pluso 3\in\Lambda$, we define $\isfunc{c\sb{\alpha \pluso 3}}{[\N]\sp{!(\alpha\pluso3)}}2$ as follows.
$$
c\sb{\alpha \pluso 3}(\langle a_0,a_1,a\sb2 \rangle \conc s)=
\begin{cases}
1 & \mbox{ if } \forall e,x<a_0\ \  \{e\}^{ Y_\alpha^s}(x)\stops_{a_1} \Leftrightarrow \{e\}^{ Y_\alpha^s}(x)\stops_{a\sb 2},\\ 
& \mbox{ where } Y_\alpha^s=\{y\leq \max s\colon M_{\alpha}(y,s)\mbox{ accepts}\},\\
0 &  \mbox{otherwise.}
\end{cases}
$$
For limit ordinals and other ordinals $\gamma$ not covered by the above definition we define $c\sb\gamma$ as the constant function equal to $1$.
\end{definition}

The following counting argument is crucial for the rest of the proof.

\begin{lemma}[$\RCA+\WO(\Lambda)$]\label{lem:colored:one}
Let $\beta \pluso 3\in\Lambda$ and let $H\subseteq \N$ be an infinite homogeneous set for $c\sb{\beta \pluso  3}$. 
Then, $H$ has color $1$.
\end{lemma}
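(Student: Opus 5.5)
We argue by contradiction. Suppose $H$ is infinite and homogeneous for $c_{\beta\pluso3}$ with color $0$. The idea is to build an $(\beta\pluso 3)$-size subset of $H$ whose first two "stage" elements are forced, by a pigeonhole argument, to agree on all halting behaviour below a fixed bound. Such a set must get color $1$, which is the contradiction.

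First we unfold the definition of size. For $u=\langle a_0,a_1,a_2\rangle\conc s$ we have $(\beta\pluso3)[a_0][a_1][a_2]=\beta$, since successors are reduced by one at each step. So $u$ is $(\beta\pluso3)$-size exactly when $s$ is $\beta$-size. Hence every choice $a_0<a_1<a_2<s$ with $s\in[H]^{!\beta}$ gives an element of $[H]^{!(\beta\pluso3)}$. By homogeneity, every such tuple has color $0$.

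Next comes the counting step. Fix $a_0=\min H$ and let $N=a_0\cdot a_0$ be the number of pairs $(e,x)$ with $e,x<a_0$. Choose $N+2$ elements $b_0<b_1<\dots<b_{N+1}$ of $H$ above $a_0$. Then take a $\beta$-size set $s\subseteq H$ with $s>b_{N+1}$; one exists because $H$ is infinite and every infinite set has a $\beta$-large initial segment, whose shortest such prefix is $\beta$-size. The oracle $Y=Y^s_\beta$ depends only on $s$, not on the $b_i$.

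For each $i$ let $S_i=\{(e,x): e,x<a_0,\ \{e\}^{Y}(x)\stops_{b_i}\}$. The sets $S_i$ increase with $i$: a computation coded below $b_i$ is also coded below $b_j$ for $j>i$. Each $S_i$ is a subset of a set of size $N$, so the chain $S_0\subseteq\dots\subseteq S_{N+1}$ of length $N+2$ has two consecutive equal terms, say $S_i=S_{i+1}$. This is finite pigeonhole on a bounded, $\Delta^0_0$-in-parameters family, so it is available in $\RCA$. Then $\langle a_0,b_i,b_{i+1}\rangle\conc s\in[H]^{!(\beta\pluso3)}$ and satisfies the condition defining color $1$, contradicting homogeneity in color $0$.

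We also need the colouring to be well defined. By the totality lemma for the machines $M_\beta$, which uses $\WO(\Lambda)$, the finite set $Y$ exists. The relation $\{e\}^Y(x)\stops_b$ is decidable because it is bounded. The only delicate point is the bookkeeping that $u$ is $(\beta\pluso3)$-size exactly when $s$ is $\beta$-size, which should follow directly from the successor clause of the fundamental sequences, $(\gamma+1)[n]=\gamma$. Nothing else is hard.
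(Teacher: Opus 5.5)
Your proof is correct and is essentially the paper's argument. You fix $a_0$, take $a_0^2+2$ further elements of $H$ and a $\beta$-size tail $s$ above them so that the oracle $Y^s_\beta$ is fixed, and then apply pigeonhole to the $a_0^2$ pairs $(e,x)$ with $e,x<a_0$ to find consecutive $b_i<b_{i+1}$ with $c_{\beta\pluso 3}(\langle a_0,b_i,b_{i+1}\rangle\conc s)=1$. Your monotone-chain formulation, in which the sets $S_i$ must stabilize at some consecutive step, is a cleaner rendering of the paper's observation that the witnesses $(e_i,x_i)$ to color $0$ would have to be pairwise distinct.
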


\begin{proof}
Let $\beta$ be given and let $H\subseteq\N$ be an infinite homogeneous set for $c\sb{\beta\pluso3}$.
Let us choose $a\sb0 < a\sb1 < \dots < a\sb{(a\sb0)\sp2+2}\in H$ and let $s$ be a $\beta$-size subset of $H$ such that $a\sb{(a\sb0)^2+2}<\min s$. 

Now, let us observe that the only way for $c\sb\beta(a\sb0, a\sb i, a\sb {i+1}, s)$ to be equal to $0$, for $1\leq i\leq (a\sb0)\sp2+1$, is the existence of $e\sb i,x\sb i<a\sb0$ such that $\set{e\sb i}\sp {Y_\beta^s}(0)\stops \sb{a\sb {i+1}}$ and not $\set{e\sb i}\sp {Y_\beta^s}(x\sb i)\stops\sb{a\sb i}$, where $Y_\beta^s=\set{y<\max s \colon M\sb{\beta}(y,s) \mbox{ accepts}}$.

Let us fix such $(e_i,x_i)$ for each $a_i$, where $1\leq i\leq (a_0)^2+1$.
Since $Y_\beta^s$ is fixed, $(e\sb i,x\sb i)$ is different from $(e\sb j,x\sb j)$, for each $1\leq i<j\leq a\sb0+1$. 
It follows that $c\sb\beta(a\sb0,a\sb1,a\sb 2,s)=1$ since we do not have enough elements below $a\sb 0$ to differentiate all pairs $(a\sb i, a\sb{i+1})$. 
This implies that the color of $H$ is $1$.
\end{proof}

Let us recall that all our constructions are relativized to a fixed oracle $A$.

\begin{lemma}[$\RCA+\WO(\Lambda)$]\label{lem:recursive:stop}
Let $\beta \pluso 3\in \Lambda$ and let $H=\set{h\sb0 < h\sb1 < \dots}$ be an infinite homogeneous set for $c\sb{\beta \pluso 3}$. 
Then, for any $i\in\N$  and for any $e,x<h\sb i$ we have that if $\{e\}^A(x)\stops$ then $\{e\}^A(x) \stops \sb{h\sb{i+1}}$.
\end{lemma}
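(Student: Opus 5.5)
We derive the statement from Lemma \ref{lem:colored:one}, which says that $H$ has color $1$, together with Lemma \ref{lem:from:alpha:to:beta}, which lets us identify the bottom level of the approximating oracle with $A$ itself. Fix $i$ and $e,x<h_i$ with $\{e\}^A(x)\stops$, say with a halting computation of code $m$. Arguing by contradiction, suppose $\{e\}^A(x)$ does not halt below $h_{i+1}$. We choose indices $j>i+1$ in $H$ with $h_j > m$ and $h_j$ beyond any bound needed for $A$-queries of that computation. Then we choose a $\beta$-size set $s\subseteq H$ with $\min s > h_j$, using that $H$ is infinite and every infinite set has a $\beta$-large initial segment. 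The triple is $\langle h_i, h_{i+1}, h_j\rangle$, so $\langle h_i,h_{i+1},h_j\rangle\conc s$ is a $(\beta\pluso3)$-size subset of $H$. This uses that the fundamental sequence of a successor is its predecessor, so three extra leading elements reduce $\beta\pluso 3$ to $\beta$.

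By homogeneity and Lemma \ref{lem:colored:one}, $c_{\beta\pluso3}$ takes the value $1$ on this set. With $a_0=h_i$, $a_1=h_{i+1}$ and $a_2=h_j$, Definition \ref{def:colorings} then gives
$$\{e\}^{Y^s_\beta}(x)\stops_{h_{i+1}} \ifff \{e\}^{Y^s_\beta}(x)\stops_{h_j}.$$
It remains to transfer this statement from oracle $Y^s_\beta$ to oracle $A$. Here $A$ is coded at level $0$ as pairs $\langle 0,z\rangle$, and $\{e\}^A$ should be read as the machine that queries $z\in A$ through $\langle 0,z\rangle$. We must check that for $y=\langle 0,z\rangle<\min s$, membership $y\in Y^s_\beta$ holds iff $z\in A$.

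This check is the main technical step. By Lemma \ref{lem:from:alpha:to:beta}, applied down the chain of fundamental sequences, $M_\beta(\langle 0,z\rangle,s)$ accepts iff some $M_1(\langle 0,z\rangle,\langle s_k\rangle)$ accepts. This happens iff $\langle 0,z\rangle<s_k$ and $z\in A$, and since $s_k\ge\min s>\langle 0,z\rangle$ the side condition holds automatically. Alternatively, the same fact follows by a direct transfinite induction on $\alpha\leqo\beta$ using the definition of $M_{\alpha\pluso1}$ and the limit clause; this is $\Pi^0_1$-like in $\alpha$ and available in our setting via $\WO(\Lambda)$. The essential point is that the level-$0$ part is never truncated except by $y<x_0$, and all relevant $x_0$ exceed the query codes because queries are bounded by $h_j<\min s$.

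Consequently every halting computation of $\{e\}$ with code below $h_j$ uses only oracle positions below $h_j<\min s$. Any such computation is therefore the same whether it is run with oracle $Y^s_\beta$ or with $A$, and this holds for both bounds $h_{i+1}$ and $h_j$. Since the computation of code $m<h_j$ halts, the right-hand side $\{e\}^{Y^s_\beta}(x)\stops_{h_j}$ of the displayed equivalence holds. Hence the left-hand side holds as well, and transferring back gives $\{e\}^A(x)\stops_{h_{i+1}}$, contradicting our assumption. The subtle point is ensuring the oracle agreement below $h_j$ rather than only below $\max s$.
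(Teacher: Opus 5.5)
Your argument is the paper's: you pick a third element $h_j$ of $H$ above the code of the halting computation and a $\beta$-size tail $s$ above it. Then $\langle h_i,h_{i+1},h_j\rangle\conc s\subseteq H$ is $(\beta\pluso3)$-size with color $1$ by Lemma \ref{lem:colored:one}, and the computation transfers between $Y^s_\beta$ and $A$. Your explicit check via Lemma \ref{lem:from:alpha:to:beta} that $\langle 0,z\rangle\in Y^s_\beta\ifff z\in A$ below $\min s$ makes precise the paper's bare remark that $A\restricted k\subseteq Y^s_\beta$ for large $s$. Your reading of $\{e\}^A$ as consulting only level-$0$ positions is exactly what both arguments tacitly need, since entries $\langle\gamma,z\rangle$ with $\gamma>0$ can lie in $Y^s_\beta$ below $\min s$; this is harmless because the lemma is only applied to oracle-free computations such as $f_{\beta\rightarrow\alpha}$.
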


\begin{proof}
Let $\beta \pluso 3\in \Lambda$ and $e,x<h\sb i$ be as in the statement of the lemma and such that $\set{e}^A(x)\stops$.

Then there exists a $(\beta \pluso 1)$-size sequence $\la a\sb 2, a\sb 3, \dots, a\sb n \ra$ in $H$ such that $a\sb 2 < a\sb 3 < \dots < a\sb n$ such that $h\sb {i+1} < a\sb 2$ and $\set{e}^A(x)\stops\sb{a\sb 2}$. 
In fact, if $\set{e}^A(x)\stops$ then $\set{e}^{A\restricted k}(x)\stops$ for some $k\in\N$. 
Then note that for sufficiently large $s\in [H]^{!\beta}$, $A\restricted k \subseteq Y_\alpha^s$. 

But the sequence $\la h\sb i,h\sb {i+1},a\sb 2,\dots,a\sb n \ra$ in $H$ is $(\beta \pluso 3)$-size and its color is $1$ by Lemma \ref{lem:colored:one}. 
It follows that $\set{e}^A(x)\stops\sb{h\sb {i+1}}$.
\end{proof}

We introduce a computable operation $S(\alpha, X)$ where $X\subseteq\N$ is infinite and $\alpha\in\Lambda$.
The operation returns a new set which is more sparse than $X$. 

\begin{definition}[$\alpha$-scattering of $X$]\label{dfn:set:scattering}
Let $X=\set{x\sb0 < x\sb1 < x\sb2 < \cdots}\subseteq \N$ be an infinite set. 
We define the $\alpha$-scattering of $X$, $$S(\alpha,X)=\{y_0 < y_1 < y_2 < \cdots\},$$ 
as the result of the following procedure.

Take the first element of $X$ and output it as $y\sb0$. 
Assume that we have defined up to $y\sb i$.
Take the first three elements $z\sb 0, z\sb 1, z\sb 2$ above $y\sb i$ and then take the $\a$-size sequence of consecutive elements of $X$ above $z\sb 2$. 
Output the next element of $X$ as  $y\sb{i+1}$.
We also define $S \sp 1 (\alpha,X)= S(\alpha, X)$ and $S \sp {n+1} (\alpha,X)$ as $S(\alpha, S \sp n(\alpha, X))$ for $n>0$.
\end{definition}

It is obvious that if $\Lambda$ is a computable linear ordering such that $\WO(\Lambda)$ then the procedure used to define $S(\alpha, X)$ is computable and we may decide whether $y\in S(\alpha,X)$ given $X$ as an oracle.
It should also be clear that if $\WO(\Lambda)$ and $X\subseteq\N$ is infinite then for all $\alpha\in\Lambda$, $S(\alpha,X)$ is infinite.

The operation of taking $S(\alpha,X)$ is intended to increase the amount of homogeneity of $X$. 
Namely, Lemma \ref{lem:coloring:monotonicity} below shows that if we start with an infinite $c\sb{\alpha \pluso 3}$-homogeneous set $X$ then $S(\alpha,X)$ is almost $c\sb{\beta \pluso 3}$-homogeneous for all $\beta\leqo \alpha$ in the following sense: for each $\beta \leqo \alpha$ there exists an $n=n(\beta)$ such that $S(\alpha, X)\cap [n, \infty)$ is $c_{\beta\pluso3}$-homogeneous. 
Furthermore $n(\beta)$ is uniformly computable from $\beta$. 

Recall that for $X\subseteq\N$ and $n\in\N$ we denote by $X^{>n}$ the set $X \cap [n+1, \infty)$.
The main property of the set $S(\a, X)$ from Definition \ref{dfn:set:scattering} is the following. 
If $x, y \in X$ and $s \in [X]^{<\omega}$ we say that $s$ is in the interval $(x,y)$ if $x < \min s$ and $\max s < y$. 

\begin{lemma}[$\RCA+\WO(\Lambda)$]\label{lem:scattering:properties}
Let $X\subseteq\N$ be infinite.
Let $\alpha\in\Lambda$ and let $S(\alpha,X) = \set{a\sb0 < a\sb1 < a\sb 2 < \cdots}$.
\begin{enumerate}
\item For each $\beta\leqo \alpha$, for each $i\in\N$ the following holds: if $|\beta| \leq a\sb i$, then there exists a set $s$ in $(a\sb i,a\sb{i+1})$ of consecutive elements of $X$ such that $\alpha[s] =\beta$.
\item For each $\gamma\leqo\beta\leqo \alpha$, for each $i\in\N$ the following holds: if $|\beta|,|\gamma|\leq a\sb i$, then there exists a set $s$ in $(a\sb i, a\sb{i+1})$ of consecutive elements of $X$ such that $\beta[s]=\gamma$.
\end{enumerate}
\end{lemma}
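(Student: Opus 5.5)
Both parts come from the gap the scattering procedure leaves between consecutive elements $a_i < a_{i+1}$ of $S(\alpha,X)$, combined with the norm property (3) of $\Lambda$ (Proposition \ref{prop: goodnorm}). Fix $i$. By Definition \ref{dfn:set:scattering}, between $a_i$ and $a_{i+1}$ the construction picks three elements $z_0<z_1<z_2$ of $X$ above $a_i$. It then takes a sequence $u$ of consecutive elements of $X$ above $z_2$ that is $\alpha$-size, and sets $a_{i+1}$ to be the next element of $X$. So $u$ lies in the interval $(a_i,a_{i+1})$, consists of consecutive elements of $X$, and satisfies $\alpha[u]=0$. Moreover $\min u > a_i \ge |\beta|$.

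For part (1), suppose $\beta\leqo\alpha$ and $|\beta|\le a_i$. If $\beta=\alpha$, take $s=\emptyset$, since $\alpha[\emptyset]=\alpha$. (Or take the empty initial segment of $u$, which also lies trivially in the interval.) Otherwise $\beta\leo\alpha$. We have $\alpha[u]=0\leqo\beta$ and $\min u\ge|\beta|$, so property (3) (the second clause of Proposition \ref{prop: goodnorm}) gives some $t\sqsubseteq u$ with $\alpha[t]=\beta$. An initial segment of a sequence of consecutive elements of $X$ is again such a sequence, and it lies in $(a_i,a_{i+1})$. So $s=t$ works. Degenerate cases such as $\beta=0$ are covered by $t=u$.

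For part (2), suppose $\gamma\leqo\beta\leqo\alpha$ with $|\beta|,|\gamma|\le a_i$. I split $u$. First apply part (1) to get $t\sqsubseteq u$ with $\alpha[t]=\beta$, and write $u=t\conc v$. Then $\beta[v]=\alpha[u]=0\leqo\gamma$. Since $\min v\ge\min u>a_i\ge|\gamma|$, property (3) applied to $\beta$ and $v$ gives $w\sqsubseteq v$ with $\beta[w]=\gamma$. The case $\gamma=\beta$ is handled by taking $w=\emptyset$. Again $w$ is a block of consecutive elements of $X$ inside $(a_i,a_{i+1})$, so $s=w$ works. If one prefers not to need $\min v\ge|\gamma|$ from the ordering of $u$, note that all elements of $u$ exceed $a_i$ anyway.

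The only point needing care is formalizability in $\RCA+\WO(\Lambda)$. The statement is $\Sigma^0_0$ relative to the computable enumeration of $S(\alpha,X)$: the $\alpha$-size block $u$ exists by finite search, which terminates because every infinite set has an $\alpha$-large initial segment, and this uses $\WO(\Lambda)$ so that the descending sequence $\alpha[\cdot]$ reaches $0$. After that, the required $t$ and $w$ are found by bounded search over initial segments. I expect the main obstacle to be bookkeeping rather than mathematics. One must check that property (3) is applied with the strict hypothesis $\beta\leo\delta$, and treat equality separately via the empty sequence. One must also confirm that the three elements $z_0,z_1,z_2$ do not interfere with the blocks, since they simply sit below $u$.
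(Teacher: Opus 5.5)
Your proof is correct and follows the paper's argument. Part (1) is exactly the paper's combination of the $\alpha$-size block that $S(\alpha,X)$ places in $(a_i,a_{i+1})$ with Proposition \ref{prop: goodnorm}, and part (2) is again two applications of that proposition to this block. The only cosmetic difference is the order of the two steps: the paper first descends from $\alpha$ to $\gamma$, then locates $\beta$ inside that prefix and keeps the tail, whereas you descend from $\alpha$ to $\beta$ first and then continue from $\beta$ to $\gamma$ in the remainder $v$. Your explicit handling of the equality cases via the empty sequence is harmless extra care that the paper leaves implicit.
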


\begin{proof}
We prove both points using the properties of fundamental sequences. 
The first point follows easily from Proposition \ref{prop: goodnorm} and the fact that $X$ contains an $\alpha$-size set in $(a\sb i, a\sb{i+1})$ by definition of $S(\alpha, X)$.
As for the second point we take the sequence $t=\langle t\sb0,\dots,t\sb {k-1}\rangle$ witnessing the first point for $\gamma$ and $\alpha$ and, by Proposition \ref{prop: goodnorm} we find its subsequence $\langle t\sb 0,\dots, t\sb {j-1}\rangle$ such that $\alpha[t\sb 0]\cdots [t\sb {j-1}]=\beta$. 
Then, the sequence $\langle t\sb j,\dots, t\sb {k-1}\rangle$ witnesses the second point.
\end{proof}

We introduce some auxiliary functions that are used in the next proof. 
Let us fix a total computable function $f \colon \N \times \N \to \N$ such that for all $e\in\N$, $\beta \in \Lambda$, $f(e,\beta)$ is the code of a Turing machine constructed from $e$ and $\beta$ that acts as $e$ except treating oracle calls as follows: for any oracle call of the form $\la \gamma,z \ra$ the machine $f(e,\beta)$ checks whether $\gamma\leqo \beta$. 
If $\gamma\leqo\beta$, then it simply passes the question to the oracle, if not it assumes that the oracle answer is \lq\lq not\rq\rq. 
Thus, $f(e,\beta)$ with oracle $X$ acts like $e$ with the oracle $\set{\langle \gamma,z\rangle\in X\colon \gamma\leqo \beta}$.

The correspondence between the two machines $e$ and $f_\beta(e)$ can be given by computable translations {\em of their computations}, which we now describe. 

The function $f\sb{\beta\rightarrow \alpha}$ takes a (code of a) computation $C$ of a machine $e$ with oracle $X$ and computes $C'$, a (code of a) computation of $f_\beta(e)$ with oracle $X$.
Concretely, it replaces each oracle query $\langle \gamma,z\rangle$ with a subroutine which checks whether $\gamma\leqo\beta$. 
If $\gamma\leqo\beta$, then $f_\beta(e)$ passes the question to $X$, otherwise, $f\sb \beta (e)$ assumes the answer is \lq\lq not\rq\rq.
It may happen that $\langle \gamma, z \rangle \in X$ and it is not the case that $\gamma\leqo\beta$. 
In this case the computation coded by $f\sb{\beta\rightarrow \alpha}(C)$ stops with no output as there would be inconsistency in behaviors of $e$ and $f\sb \beta(e)$.  

The function $f\sb{\alpha\rightarrow \beta}$ computes from a (code of a) computation $C'$ of $f\sb \beta(e)$ with oracle $X$ a corresponding a (code of a) computation of $e$ with oracle $\{ \langle \gamma,z\rangle \in X \colon \gamma\leqo\beta\}$. 
It replaces the subroutine checking whether $\gamma\leqo\beta$ by a direct oracle call. 

We need the following properties of $S(\alpha,X)$.
Note that we do not quantify over $X$ in the inductive thesis. 
Thus, we treat $X$ as a second order parameter. 
To simplify the notation in the rest of the section, let $n_{\a,\b}$ be the maximum between $|\beta|$, $|\a|$, the code of $f_\b$, the code of $f\sb{\alpha\rightarrow \beta}$ and the code of $f\sb{\b\rightarrow \a}$.


\begin{lemma}[$\RCA+\WO(\Lambda)$]\label{lem:coloring:monotonicity} 
Let $\beta\leqo\alpha\in\Lambda$, let $H\subseteq\N$ be an infinite, $c\sb{\alpha \pluso3}$-homogeneous set. 
Then, $S(\alpha,H)\sp{>n_{\a,\b}}$ is $c\sb{\beta\pluso3}$-\homo.
\end{lemma}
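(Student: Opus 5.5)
Let $S(\alpha,H)^{>n_{\a,\b}}=\{a_0<a_1<\cdots\}$ and let $\la a_{i_0},a_{i_1},a_{i_2}\ra\conc t$ be any $(\beta\pluso3)$-size subset of it, with $t$ a $\beta$-size sequence. We will show that its $c_{\beta\pluso3}$-color is $1$. This establishes homogeneity, since every such set then gets the same color. By Definition \ref{def:colorings} we must show: for all $e,x<a_{i_0}$,
\[ \{e\}^{Y^t_\beta}(x)\stops_{a_{i_1}} \Leftrightarrow \{e\}^{Y^t_\beta}(x)\stops_{a_{i_2}}. \]
The right-to-left direction is the nontrivial one. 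Assume $\{e\}^{Y^t_\beta}(x)$ halts with code $<a_{i_2}$. The goal is to move this computation to one relative to an oracle $Y^{u}_\alpha$ for an $\alpha$-size $u\subseteq H$, and then apply Lemma \ref{lem:colored:one} to $H$ and $c_{\alpha\pluso3}$.

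Step 1 (embedding $t$ into an $\alpha$-size set). Since $a_{i_2}\in S(\alpha,H)$ and $|\beta|\le n_{\a,\b}<a_{i_2}$, Lemma \ref{lem:scattering:properties}(1) gives a block $s$ of consecutive elements of $H$ lying strictly between $a_{i_2}$ and the next element of $S(\alpha,H)$ with $\alpha[s]=\beta$. Because $t$ lies in $S(\alpha,H)$ above $a_{i_2}$, we have $s<t$. Hence $u=s\conc t$ is an $\alpha$-size subset of $H$. Lemma \ref{lem:from:alpha:to:beta} then says that $M_\alpha(\cdot,u)$ and $M_\beta(\cdot,t)$ accept the same pairs $\la\gamma,z\ra$ with $\gamma\leqo\beta$. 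Also, $M_\beta(\cdot,t)$ accepts only such pairs, and both oracles are cut at $\max t$. Therefore $Y^t_\beta=\{y\in Y^u_\alpha : y=\la\gamma,z\ra,\ \gamma\leqo\beta\}$.

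Step 2 (translating the computation). The machine $e$ with oracle $Y^t_\beta$ behaves like $e'=f_\beta(e)$ with oracle $Y^u_\alpha$. The computation-translation maps $f_{\beta\to\alpha}$ and $f_{\alpha\to\beta}$ send halting computations of one machine to halting computations of the other. The code of $e'$ and the codes of the translated computations are bounded by a fixed primitive recursive function of the original code and of $n_{\a,\b}$. Here the hypothesis $a_{i_0}>n_{\a,\b}$ and the spacing of $S(\alpha,H)$ enter: I would pick the triple used for $c_{\alpha\pluso3}$ so that it absorbs this overhead. I would choose $\la a_{i_0}, h, h'\ra\conc u$ with $h,h'\in H$, where $h$ is one of the three elements inserted by the scattering just above $a_{i_1}$ and $h'$ is one of those inserted just above $a_{i_2}$. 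Then $e',x<a_{i_0}$, provided the coding of $f_\beta$ is monotone and $a_{i_0}>n_{\a,\b}$ (one may need to argue that $f_\beta(e)<a_{i_0}$ by adjusting the pairing conventions, as in Remark \ref{pairingfunctionproperty}). Moreover, translated codes below $a_{i_2}$ land below $h'$. Lemma \ref{lem:colored:one} makes $c_{\alpha\pluso3}(\la a_{i_0},h,h'\ra\conc u)=1$. So $\{e'\}^{Y^u_\alpha}(x)\stops_{h}$, and translating back with $f_{\alpha\to\beta}$ gives $\{e\}^{Y^t_\beta}(x)\stops$ with a code bounded in terms of $h$.

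Main obstacle. The hardest step is the bookkeeping of code sizes in Step 2. We need the back-translated computation to have code below $a_{i_1}$, not merely below $h$, and we need the forward translation to stay below $h'$. I would handle this with the room provided by the scattering. Between consecutive elements of $S(\alpha,H)$ there are three elements $z_0,z_1,z_2$ of $H$ and a whole $\alpha$-size block. So I would instead take $h$ to be the element immediately below $a_{i_1}$ among the inserted ones. Then I would use that the translations increase codes by at most an amount controlled by $n_{\a,\b}<a_{i_0}\le$ these gaps, which is the reason the threshold $n_{\a,\b}$ appears in the statement. Everything is first-order with parameters $H$ and $A$, and needs only $\WO(\Lambda)$ for the totality of the $M_\alpha$ and for Lemmas \ref{lem:from:alpha:to:beta} and \ref{lem:scattering:properties}, so the argument goes through in $\RCA+\WO(\Lambda)$.
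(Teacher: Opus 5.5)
Your Step 1 matches the paper. Step 2, however, has a genuine gap: the numerical bookkeeping, which is the real content of the lemma, is never justified, and the mechanism you propose for it does not work.

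First, you take $a_{i_0}$ as the first coordinate of the $c_{\alpha\pluso3}$-triple, which requires $f_\beta(e)<a_{i_0}$. This does not follow from $e<a_{i_0}$. Monotonicity of the coding does not help: $f_\beta(e)$ is in general larger than $e$, so for $e$ close to $a_{i_0}$ it can exceed $a_{i_0}$, and no pairing convention repairs this. Second, the claim that $f_{\beta\to\alpha}$ and $f_{\alpha\to\beta}$ raise codes ``by at most an amount controlled by $n_{\a,\b}$'' that fits in the gaps of the scattering is unfounded. The translated code is some computable function of the original code, not an additive perturbation. The scattering only inserts extra elements of $H$; by itself it gives no lower bound on how far apart those elements are.

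The missing ingredient is Lemma \ref{lem:recursive:stop}. Because $H$ is an infinite $c_{\alpha\pluso3}$-homogeneous set, every halting computation whose program and input lie below an element of $H$ has code below the next element of $H$. The maps $f_\beta$, $f_{\beta\to\alpha}$, $f_{\alpha\to\beta}$ are computable without consulting the oracles $Y$, and their codes are below $n_{\a,\b}$. So each application of one of them costs one step in $H$, and the three elements $z_0,z_1,z_2$ inserted by the scattering supply those steps.

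This is how the paper argues. Let $a_{i_0}<a^1_0<a^2_0<a^3_0<a_{i_1}$, where $a^1_0,a^2_0,a^3_0$ are the elements of $H$ inserted after $a_{i_0}$, and pick $a_2\in H$ with $a_{i_2}<a_2<\min s$. Then:
\begin{itemize}
\item $f_\beta(e)<a^1_0$ by Lemma \ref{lem:recursive:stop};
\item $C'=f_{\beta\to\alpha}(C)<a_2$, since $C<a_{i_2}$;
\item $C'<a^3_0$, by Lemma \ref{lem:colored:one} applied to $\la a^2_0,a^3_0,a_2\ra\conc s\conc t$, using $f_\beta(e),x<a^2_0$;
\item $C=f_{\alpha\to\beta}(C')<a_{i_1}$.
\end{itemize}
Your variant, with second coordinate $h$ immediately below $a_{i_1}$ in $H$, would also work once Lemma \ref{lem:recursive:stop} is used. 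In either version, though, the first coordinate of the triple must be moved strictly above $a_{i_0}$.
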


\begin{proof}

Let $\beta\leqo\alpha\in\Lambda$ and $H$ satisfy the assumptions of the lemma.
Let $\la b\sb0,b\sb1,b\sb 2\ra \conc t$ be a $(\beta \pluso 3)$-size sequence in $S(\alpha,H)\sp{>n_{\a,\b}}$. 
Let $e,x<b\sb 0$ be such that 
$$
\set{e}\sp{Y\sb \beta \sp t}(x)\stops\sb{b\sb 2},
\mbox{ where } 
Y\sb \beta \sp t=\set{ y\leq \max(t) \colon M\sb \beta(y,t)\mbox{ accepts}}.
$$
We need to show that $\set{e}\sp{Y\sb \beta \sp t}(x)\stops\sb{b\sb 1}$.

Let $a\sp{1}\sb0,a\sp{2}\sb0, a\sp{3}\sb0\in X$ be such that 
$$
b\sb0<a\sp{1}\sb0<a\sp{2}\sb0 < b\sb 1.
$$
The code of $f\sb{\beta}$ is below $a\sp{2}\sb0$.
Now, let $a\sb 2\in H$ and let $s$ be a sequence in $H$ such that 
$$
b\sb 2<a\sb 2<\min s <\max s <\min t 
$$ and 
$$
\beta=\alpha[s].
$$
The existence of $a_2$ and $s$ is guaranteed by Lemma \ref{lem:scattering:properties}.
Finally, let
$$
Y\sb \alpha \sp {s\conc t} =\set{ y \leq \max t \colon M\sb \alpha(y,s \conc t)\mbox{ accepts}}.
$$
By Lemma~\ref{lem:from:alpha:to:beta} we have the following equality
\begin{equation}\label{eq:Yb:Ya}
Y\sb \beta \sp t = \set{\langle \gamma,z\rangle\in  Y\sb \alpha \sp {s \conc t}\colon \gamma\leqo \beta}.
\end{equation}
Thus, $e$ with oracle $Y\sb\beta \sp t$ acts as $f\sb\beta(e)$ with oracle $Y\sb\alpha \sp {s \conc t}$, i.e., 
$$\forall x \in\N \, (\{e\}^{Y\sb\beta\sp t}(x) \simeq \{f_\beta(e)\}^{Y\sb\alpha\sp {s \conc t}}(x)).$$
The correspondence between the two machines coded by $e$ and $f_\beta(e)$ can be given by recursive translations, $f\sb{\beta \rightarrow \alpha}$ and $f\sb{\alpha \rightarrow \beta}$, of their computations.

Recall that the computable function $f\sb{\beta\rightarrow \alpha}$ takes a (code of a) computation of a machine $e$ with an oracle $X$ and outputs a (code of a) computation of $f\sb{\beta}(e)$ with oracle $X$. 
The main property of $f\sb{\beta\rightarrow \alpha}$ is the following.
\begin{equation}\label{eq:f:beta:alpha}
\begin{split}
\lefteqn{\mbox{If $C$ is a computation of $e$ with oracle $Y\sb \beta \sp t$}}&\\
&\mbox{\ \ \ \ \ \ then $f\sb{\beta\rightarrow \alpha}(C)$ is a computation of $f\sb \beta(e)$ with oracle $Y\sb \alpha \sp {s \conc t}$}.
\end{split}
\end{equation}
Property~\eqref{eq:f:beta:alpha} follows easily from equality~\eqref{eq:Yb:Ya} which ensures that there is no inconsistency between  these two computations.  

It is crucial that the transformation $f\sb{\beta\rightarrow \alpha}$ does not use the oracle since this allows us to apply Lemma~\ref{lem:recursive:stop}. 
More precisely, by our choice of $n_{\a,\b}$ we know that the code of $f\sb{\beta \rightarrow \alpha}$ is $<a\sp{1}\sb 0$.

Now, let $C$ be the computation of $e$ on $x$ with oracle $Y\sb \beta \sp t$. 
Recall that we are reasoning under the assumption $\{e\}^{Y_\beta^t}(x) \stops_{b_2}$, so that $C<b\sb 2$.
Then, $C'=f\sb{\beta\rightarrow \alpha}(C)$ is the computation of $f\sb \beta(e)$ on $x$ with oracle $Y\sb \alpha \sp {s \conc t}$. 
By Lemma~\ref{lem:recursive:stop} and the above observations on $f\sb{\beta \rightarrow \alpha}$, we have $C'<a\sb 2$. 
Since the sequence $\la a^2_0, a^3_0, a_2\ra \conc s \conc t$ is $(\alpha \pluso 3)$-size and in $H$, it has to be the case that 
$$ c_{\alpha \pluso 3}(\la a^2_0, a^3_0, a_2\ra \conc s \conc t) = 1,$$
by Lemma \ref{lem:colored:one}. 
Since $f_\beta(e), x < a^2_0$ this implies, in particular, that
$$\{f_\beta(e)\}^{Y_\a^{s\conc t}}(x)\stops_{a_2} \Rightarrow \{f_\beta(e)\}^{Y_\a^{s\conc t}}(x)\stops_{a^3_0}.$$
Therefore $C'<a\sp{3}\sb 0$. 

It should be obvious that the other operation, $f\sb{\alpha \rightarrow \beta}$, which computes from a computation $C'$ of $f\sb \beta(e)$ with oracle $X$ a corresponding computation of $e$ with oracle $\set{(\gamma,z)\in X\colon \gamma\leqo \beta}$ satisfies the following property:
\begin{equation}\label{eq:f:alpha:beta}
\begin{split}
\lefteqn{\mbox{If $C'$ is a computation of $f\sb \beta(e)$ with
oracle $Y\sb \alpha \sp {s \conc t}$}}&\\ 
&\mbox{\ \ \ \ \ \ then $f\sb{\alpha\rightarrow \beta}(C')$ is a computation of $e$ with oracle $Y\sb \beta \sp t$.}
\end{split}
\end{equation}
Again by assumption we have that a code of $f_{\beta \to \alpha}$ is below $n_{\a,\b}$. 
Then, since $C'<a\sp{3}\sb0 < b_1$, Lemma \ref{lem:recursive:stop} ensures that 
$C=f\sb{\alpha\rightarrow\beta}(C')<b\sb 1$ which shows that $\set{e}\sp{Y\sb \beta \sp t}(x)\stops\sb{b\sb 1}$ and ends the proof.
\end{proof}

Now we want to prove that given an infinite $c\sb{\alpha\pluso3}$-\homo\ set $H\subseteq\N$, an ordinal $\beta\leqo \alpha$ and two $\beta$-size sequences $s,t$ from $S(\alpha,H)\sp{>n_{\a,\b}}$ it does not matter which sequence we input to the machine $M\sb\beta$ (Lemma \ref{lem:eq:homo:wit}).
We need some preliminary lemmas.

\begin{lemma}[$\RCA+\WO(\Lambda)$]\label{lem:bounding:computations}
Let $\alpha\in\Lambda$ and let $H\subseteq\N$ be an infinite $c\sb{\alpha \pluso3}$-\homo\ set, let $\beta\leqo \alpha$, let $s$ be a $\beta$-size sequence in $S\sp2(\alpha,H)\sp{>n_{\a,\b}}$ and let $s\sb0, s\sb1$ be in $S\sp2(\alpha,H)\sp{>n_{\a,\b}}$ such that $s\sb0 < s\sb1 < \min s$. 
Then, 
$$\forall e,x<s\sb0 \left(\set{e}\sp {Y \sb \beta \sp s}(x)\stops\sb {\min s}\Longrightarrow \set{e}\sp {Y \sb \beta \sp s}(x)\stops\sb {s\sb1}\right),$$
where $Y_\beta^s =\set{y<\max s\colon M\sb\beta(y,s)\mbox{ accepts}}$. 
\end{lemma}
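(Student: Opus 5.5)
We argue by a reduction to Lemma~\ref{lem:coloring:monotonicity}, using the extra room that the double scattering $S^2(\alpha,H)$ gives.

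\emph{Step 1: set up a $(\beta\pluso3)$-size sequence.} Write $S^2(\alpha,H)=S(\alpha,S(\alpha,H))$. Between two consecutive elements of $S^2(\alpha,H)$ there are at least three elements of $S(\alpha,H)$, followed by an $\alpha$-size block of consecutive elements of $S(\alpha,H)$. Take $s_1$ together with its successor $s_1'$ in $S^2(\alpha,H)$. In the interval $(s_1,s_1')$ choose an element $u\in S(\alpha,H)$. Take $u$ to be the last element before the $\alpha$-size block if we want it large, or any element of the three. Then $\la s_0,s_1,u\ra\conc s$ would be a $(\beta\pluso3)$-size sequence in $S(\alpha,H)^{>n_{\a,\b}}$, because $s$ is $\beta$-size, $s_0<s_1<u<\min s$, and all these elements lie in $S(\alpha,H)$ (since $S^2(\alpha,H)\subseteq S(\alpha,H)$). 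The difficulty is that $u$ is smaller than $\min s$, while we need to control halting below $\min s$, not below $u$.

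\emph{Step 2: apply Lemma~\ref{lem:coloring:monotonicity}.} That lemma says $S(\alpha,H)^{>n_{\a,\b}}$ is $c_{\beta\pluso3}$-homogeneous. By Lemma~\ref{lem:colored:one} applied to this homogeneous set, its color is $1$. Hence for $e,x<s_0$, convergence of $\set{e}^{Y^s_\beta}(x)$ below $s_1$ is equivalent to convergence below $u$, for any $u\in S(\alpha,H)$ with $s_1<u<\min s$. This does not yet reach $\min s$. To close the gap we use the sequence $\la s_0,s_1,m\ra\conc s$ with $m=\min s$ itself. This fails, since $m$ belongs to $s$.

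Instead, we shift the witness. Let $s'=s\setminus\{\min s\}$, or more generally consider the $\beta$-size sequence formed from elements of $S(\alpha,H)$ interleaved in the block above $s_1$. The cleaner route is to use Lemma~\ref{lem:recursive:stop}, relativized to the oracle $Y^s_\beta$, which is fixed and finite. Its proof only used that $c_{\beta\pluso3}$ has color $1$ on sequences $\la h_i,h_{i+1},a_2\ra\conc s$ with $a_2$ arbitrarily large below $\min s$ and with the \emph{same} tail $s$.

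\emph{Step 3: handle the main obstacle.} The obstacle is that the color of $\la s_0,s_1,a\ra\conc s$ compares $\set{e}^{Y^s_\beta}(x)\stops_{s_1}$ with $\stops_a$ for $a<\min s$ only. So we must show that a computation with use and code below $\min s$ already has code below the largest available $a<\min s$. We plan to use that $\min s$ is preceded in $S(\alpha,H)$ by an element $a$ with no elements of $S(\alpha,H)$ between $a$ and $\min s$. Computations coded strictly between $a$ and $\min s$ must then be excluded by the construction of $H$. For this, apply $c_{\alpha\pluso3}$-homogeneity of $H$ itself: between $a$ and $\min s$ lie elements of $H$ (the $\alpha$-size block used in scattering). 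Transferring via $f_\beta$, $f_{\beta\to\alpha}$ and $f_{\alpha\to\beta}$ exactly as in the proof of Lemma~\ref{lem:coloring:monotonicity}, a computation of $e$ with oracle $Y^s_\beta$ halting below $\min s$ translates to a computation of $f_\beta(e)$ with oracle $Y^{r\conc s}_\alpha$ for a suitable $r$ with $\alpha[r]=\beta$ chosen in the gap. That computation is then pushed by the color-$1$ property of $c_{\alpha\pluso3}$ on $H$ below an element of $H$ just above $s_1$, and translated back below $s_1$. The step most likely to need care is checking that $r$ can be placed in the gap after $s_1$ and before $\min s$, which is where the second scattering is used; here Lemma~\ref{lem:scattering:properties} supplies $r$ between consecutive elements of $S(\alpha,H)$.
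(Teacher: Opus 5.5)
There is a genuine gap: the obstacle you isolate in Step 1 is never overcome. Any $(\beta\pluso3)$-size sequence $\langle a,a',a''\rangle\conc s$ with tail $s$ has $a''<\min s$. Homogeneity therefore only moves convergence from below $a''$ to below $a'$, never from below $\min s$.

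Step 3 reproduces the same problem one level up. In an $(\alpha\pluso3)$-size sequence $\langle a,a',a''\rangle\conc r\conc s$ in $H$ with $r$ placed between $s_1$ and $\min s$, you have $a''<\min r<\min s$. But all you know about $C'=f_{\beta\to\alpha}(C)$ comes from $C<\min s$. Via Lemma \ref{lem:recursive:stop} this gives a bound above $\min s$, not below $a''$, so the color-$1$ property of $c_{\alpha\pluso3}$ cannot be applied. Lemma \ref{lem:coloring:monotonicity} worked only because its computation was known to halt below $b_2$, which left room for $a_2$ and the inserted block before the tail. Here there is no room between the bound $\min s$ and the tail $s$.

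The sentence ``computations coded strictly between $a$ and $\min s$ must then be excluded by the construction of $H$'' just restates the lemma. Also, Lemma \ref{lem:recursive:stop} concerns the fixed oracle $A$, and its proof varies the tail. It does not ``relativize'' to the fixed oracle $Y^s_\beta$ with the same tail $s$.

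The missing idea is to keep $\min s$ as the \emph{third} coordinate and replace the tail $s$. Use another $\beta$-size sequence $t$ with $\min t>\min s$ whose oracle agrees with $Y^s_\beta$ below $\min s$. Write $s=\langle s_2,\dots,s_k\rangle$.

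\emph{Successor case $\beta=\delta\pluso1$.} Pick $s^1_2\in S(\alpha,H)$ with $s_2<s^1_2<s_3$; this is where $s\subseteq S^2(\alpha,H)$ is used. Let $t=\langle s^1_2\rangle\conc s^-$, which is $\beta$-size. By the definition of $M_{\delta\pluso1}$, the clauses for $\langle\gamma,z\rangle$ with $\gamma\leqo\delta$ depend only on $s^-$. The clause for $\langle\delta\pluso1,z\rangle$ depends only on $s^-$ and on the condition $y<x_0$. So $Y^t_\beta$ and $Y^s_\beta$ agree below $s_2$.

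\emph{Limit case.} Take $t=\langle s^1_2\rangle\conc\tilde s\conc s^-$ with $\beta[s^1_2][\tilde s]=\beta[s_2]$, using Lemma \ref{lem:scattering:properties}. Agreement below $s_2$ follows from Lemma \ref{lem:from:alpha:to:beta} together with the norm bound of Remark \ref{pairingfunctionproperty}.

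In either case $\langle s_0,s_1,s_2\rangle\conc t$ is a $(\beta\pluso3)$-size sequence in $S(\alpha,H)^{>n_{\alpha,\beta}}$. By Lemmas \ref{lem:coloring:monotonicity} and \ref{lem:colored:one} it has color $1$, which directly gives convergence below $s_1$; no detour through $c_{\alpha\pluso3}$ is needed. Your aside about $s\setminus\{\min s\}$ points in this direction, but $s^-$ is not $\beta$-size. You also supply neither the replacement element $s^1_2$ (with $\tilde s$) nor the agreement of the two oracles below $\min s$.
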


\begin{proof}
Let $s = \langle s_2,\dots, s_k\rangle$ for some 
$s\sb2 < s\sb3 < \dots < s\sb k$ in $S\sp2(\alpha,H)\sp{>n_{\a,\b}}$ such that $s\sb1 < s\sb2$.
Let $e,x<s\sb0$ be such that 
$$
\set{e}\sp {Y\sb\beta \sp s}(x)\stops\sb {s\sb 2}.
$$
We claim that $\set{e}\sp {Y\sb\beta \sp s}(x)\stops\sb {s_1}$.
We consider two cases according to whether $\beta$ is a successor or a limit ordinal. 

\smallskip

\textbf{Case 1:} $\beta=\delta+1$. 
Choose $s\sp 1\sb 2\in S(\alpha,H)$ such that $s\sb 2< s\sp 1\sb 2<s\sb 3$. 
This is possible by the properties of $S(\alpha, H)$. 
The sequence $\langle s\sp 1 \sb 2\rangle \conc s^- = \langle s\sp 1 \sb 2, s\sb 3,\dots, s\sb k\rangle$ is $(\delta+1)$-size since $\beta=\delta+1$ and $s$ is $\beta$-size by hypothesis. By definition we have
$$ \langle\gamma, z\rangle \in Y^s_{\delta+1} \Longleftrightarrow \langle\gamma,z\rangle < \max s \mbox{ and } M_{\delta+1}(\langle \gamma, z\rangle, s) \mbox{ accepts}.$$

Moreover, for each $\gamma$ and $z$ such that $\gamma\leqo\beta$ and $\la\gamma,z\ra<s\sb 2 = \min s$, by the definitions of $Y^s_{\delta+1}$ and of $M\sb{\delta+1}$, we have that
\begin{align*}
\langle \gamma,z\rangle\in Y\sb{\delta+1} \sp s &\ifff M\sb{\delta+1}(\langle \gamma,z\rangle, s)\mbox{ accepts}\\
&\ifff  M\sb{\delta+1}(\langle \gamma,z\rangle, \langle s\sp 1\sb 2 \rangle \conc s^-)\mbox{ accepts.}
\end{align*}
To show this we observe that for ordinals $\gamma\leo \beta$ and for $\langle \gamma,z \rangle<s\sb 2$ we have 
$$
\langle \gamma,z\rangle\in Y\sb{\delta+1} \sp s \ifff M\sb{\delta}(\langle \gamma,z\rangle, s^-)\mbox{ accepts}
$$
by definition of $M_{\delta+1}$. Moreover, $\langle \delta+1,z\rangle\in Y\sb{\delta+1} \sp s$ if and only if $\set{z}\sp{Y\sb{\delta} \sp{s^-} }(0)\stops\sb{s\sb3}$, where, as usual, we set
$$
{Y\sb{\delta}\sp{s^-}}=\set{\langle \gamma,e\rangle < \max(s^-)\colon M\sb{\delta}(\langle \gamma,e\rangle,s^-)\mbox{ accepts.}}
$$ 
Then note that the right hand side of the above equality does not depend on $s\sb 2$ or $s\sp 1\sb2$.
It follows that if the pair $\langle e,x\rangle<s\sb0$ is such that 
$$
\set{e}\sp {Y\sb{\delta}\sp{s^-}}(x)\stops\sb {s\sb 2}\mbox{ and not }
\set{e}\sp {Y\sb{\delta}\sp{s^-}}(x)\stops\sb {s\sb 1}.
$$
then $\langle e,x\rangle$ witnesses that the $(\beta \pluso 3)$-size sequence $\langle s\sb0,s\sb 1,s\sb 2, s\sp 1\sb 2,s\sb 3,\dots,s\sb k\rangle$ in $S(\alpha,H)$ is colored $0$ by $c\sb{\beta \pluso 3}$. 
This is a contradiction since $n_{\a,\b}<s\sb0$, $S(\alpha,H)^{>n_{\a,\b}}$ is an infinite homogeneous set for $c\sb{\beta\pluso3}$ by Lemma \ref{lem:coloring:monotonicity} and any such set above $n_{\a,\b}$ has to be colored $1$ by Lemma \ref{lem:colored:one}.

\smallskip

\textbf{Case 2:} $\beta$ is a limit ordinal. 
Recall that the set $Y^s_\beta$ involved in the statement in this case is 
$$Y^s_\beta  = \{ y < \max s \colon M \sb{\beta[s_2]}(y, s^-) \mbox { accepts}\}$$
by definition of $M_\beta$. 

We choose $s\sp 1\sb 2\in S(\alpha,H)$ and a (possibly empty) sequence $\tilde{s}$ in $S(\alpha,H)$ such that $s\sb2<s\sp 1\sb 2 <\min \tilde{s}$ and $\max \tilde{s} <s\sb 3 =\min(s^-)$ and 
$$
\beta[s\sp 1\sb 2][\tilde{s}]=\beta[s\sb 2].
$$
The last property ensures that the sequence $\langle s^1_2\rangle\conc\tilde{s}\conc s^-$ is $\beta$-size. 
Now, let 
$$
\widetilde{Y} = {Y\sb\beta \sp {\langle s^1_2\rangle\conc\tilde{s}\conc s^-}}=\set{y<\max s\colon M\sb\beta(y,\langle s^1_2\rangle\conc\tilde{s}\conc s^-)\mbox{ accepts}}.
$$
We need to show that below $s\sb 2$ the sets $\widetilde{Y}$ and $Y\sb\beta\sp s$ coincide.
If we succeed then we get that $\set{e}\sp {\widetilde{Y}}(x)\stops\sb{s\sb 2}$ from the hypothesis that $\{e\}^{Y^s_\beta}(x) \stops_{\min s}$. 
Recall that by Lemma \ref{lem:coloring:monotonicity} $S(\alpha, H)\sp{>n_{\a, \b}}$ is $c_{\beta\pluso3}$-homogeneous and by Lemma \ref{lem:colored:one} it is colored $1$. 
It follows then that $\set{e}\sp {\widetilde{Y}}(x)\stops\sb{s\sb 1}$ since otherwise the color of the $(\beta\pluso3)$-size sequence $\langle s\sb0,s\sb1, s\sb 2,s\sp 1\sb 2\rangle\conc \tilde{s}\conc s\sp -$ would be $0$ under $c\sb{\beta \pluso 3}$. 

To complete the proof let us take a pair $\langle \gamma,z\rangle <s\sb 2$ with $\gamma\leqo\beta$.
Since $|\gamma|< s\sb 2$ by Remark \ref{pairingfunctionproperty} about the choice of the pairing function, it follows that if $\g \leo \b$ then $\gamma\leqo \beta[s\sb 2]$.
Since $\beta[s\sp 1\sb2][\tilde{s}]= \beta[s \sb 2]$, and $\langle s^1_2\rangle \conc \tilde{s}\conc s^{-}$ is $\beta$-size, then there exists $3\leq j\leq k$ such that 
$$
\gamma=\beta[s\sp 1\sb 2][\tilde{s}][s\sb 3]\cdots [s\sb j] = \beta[s_2][s_3] \cdots[s_j].
$$
Then
\begin{equation*}
\begin{split}
\langle \gamma,z\rangle\in \widetilde{Y} 
& \ifff M\sb{\beta}(\langle \gamma,z\rangle, \langle s\sp 1\sb 2\rangle\conc\tilde{s}\conc s\sp{-})\mbox{ accepts}\\
& \ifff M\sb{\gamma}(\langle \gamma,z\rangle, \langle s\sb {j+1},\dots,s\sb k\rangle)\mbox{ accepts}\\
& \ifff  M\sb{\beta}(\langle \gamma,z\rangle,\langle s\sb 2\dots,s\sb k\rangle)\mbox{ accepts}\\
&\ifff \langle \gamma,z\rangle\in {Y\sp s\sb\beta}.
\end{split}
\end{equation*}
The first and last of the above equivalences are by definition, the second and the third are by Lemma \ref{lem:from:alpha:to:beta}.
This ends the proof.
\end{proof}

The next lemma guarantees a crucial indiscernibility property of $c_{\alpha\pluso3}$-homogeneous sets.

\begin{lemma}[$\RCA+\WO(\Lambda)$]\label{lem:lem:eq:homo:wit}
Let $H\subseteq\N$ be an infinite $c\sb{\alpha \pluso3}$-\homo\ set. 
Then, for each $\beta\leqo \alpha$ and for each choice of $\beta$-size sequences $s,t$ in $S\sp2(\alpha,H)\sp{>n_{\a,\b}}$ such that $s < t$, the following equivalence holds
$$
M\sb\beta(\langle \gamma,e\rangle,s)\mbox{ accepts} \ifff
M\sb\beta(\langle \gamma,e\rangle,t)\mbox{ accepts},
$$
for all $\langle \gamma,e\rangle<\min s$.
\end{lemma}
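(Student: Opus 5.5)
We prove the statement by transfinite induction on $\beta\leqo\alpha$. Since the statement is arithmetical in the parameter $H$, it is legitimate in $\RCA+\WO(\Lambda)$ once arithmetical transfinite induction is available, as in Corollary \ref{induction}. The inductive claim is: for all $\beta$-size $s<t$ in $S^2(\alpha,H)^{>n_{\a,\b}}$ and all $\langle\gamma,e\rangle<\min s$, $M_\beta(\langle\gamma,e\rangle,s)$ accepts iff $M_\beta(\langle\gamma,e\rangle,t)$ accepts. We split into the cases where $\beta$ is a limit, a successor, or the base case.

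\textbf{Base case.} For $\beta=1$ (or $0$) the machine only looks at $y=\langle 0,z\rangle$, $y<x_0$ and $z\in A$. Since $y<\min s\le\min t$, both sides reduce to $z\in A$.

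\textbf{Limit case.} Write $s=\langle s_0\rangle\conc s'$ and $t=\langle t_0\rangle\conc t'$. Then $M_\beta(y,s)=M_{\beta[s_0]}(y,s')$ and $M_\beta(y,t)=M_{\beta[t_0]}(y,t')$. Let $y=\langle\gamma,e\rangle<\min s$. By the norm property in Remark \ref{pairingfunctionproperty} and Proposition \ref{prop: goodnorm}, $\gamma\leqo\beta[s_0]\leqo\beta[t_0]$. Rather than comparing different ordinals, we reduce to $\gamma$, or to any common $\delta\leqo\beta[s_0]$. Following $s'$ down to the stage where the fundamental sequence reaches $\gamma$, Lemma \ref{lem:from:alpha:to:beta} turns acceptance of $y$ by $M_\beta(\cdot,s)$ into acceptance by $M_\gamma(\cdot,s'')$ for a $\gamma$-size tail $s''$ of $s$. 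Similarly we obtain a $\gamma$-size tail $t''$ of $t$; it exists by Proposition \ref{prop: goodnorm}, because $t$ is $\beta$-size and $\min t\ge|\gamma|$. This reduces the limit case to the induction hypothesis for $\gamma\leo\beta$. The tails still lie in $S^2(\alpha,H)^{>n_{\a,\gamma}}$, provided $n_{\a,\gamma}$ is below $\min s$. That holds since $|\gamma|\le y<\min s$, but we must also control the codes of $f_\gamma$ and the translations; I expect either monotonicity of $n_{\a,\cdot}$ or a strengthening of the statement that uses $|\gamma|<\min s$ directly.

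\textbf{Successor case, $\beta=\delta+1$.} Write $s=\langle s_0\rangle\conc s^-$ and $t=\langle t_0\rangle\conc t^-$. For $y=\langle\gamma,z\rangle$ with $\gamma\leqo\delta$, acceptance is decided by $M_\delta(y,s^-)$ versus $M_\delta(y,t^-)$. The induction hypothesis for $\delta$ settles this, because $y<\min s<\min s^-$ and $s^-<t^-$ are $\delta$-size. The substantive subcase is $y=\langle\delta+1,z\rangle$ with $y<s_0\le t_0$. Here we must show
\[
\{z\}^{Y^{s^-}_\delta}(0)\stops_{\min s^-}\iff\{z\}^{Y^{t^-}_\delta}(0)\stops_{\min t^-}.
\]
First apply Lemma \ref{lem:bounding:computations} to both $s^-$ and $t^-$, taking $s_0,s_1$ as two elements of $S^2(\alpha,H)$ lying just above $z$ and below $\min s$. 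This shows that each halting, if it occurs, occurs within the bound $s_1<\min s$. Any computation coded below $s_1$ has use below $s_1<\min s^-$. By the induction hypothesis for $\delta$, $Y^{s^-}_\delta$ and $Y^{t^-}_\delta$ agree below $\min s^-$, so the same computation is valid with either oracle. This gives both directions.

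The main obstacle is the bookkeeping in the successor subcase. We need points of $S^2(\alpha,H)$ in the right positions. We also need the witnessing pair $\langle e,x\rangle=\langle z,0\rangle$ to be below the chosen $s_0$, and the thresholds $n_{\a,\b}$ versus $n_{\a,\delta}$ to be compatible. I will handle this using the spacing built into $S^2$, which leaves an entire $S(\alpha,H)$-block between consecutive elements, and by taking $s_0=\min s$ itself if needed. Checking that Lemma \ref{lem:bounding:computations} applies with exactly these parameters is the delicate point.
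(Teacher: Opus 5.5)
You have the same overall structure as the paper: a bottom-up induction over the levels below $\beta$. Limit levels pass through via Lemma \ref{lem:from:alpha:to:beta}, the lower part of a successor level $\delta+1$ follows from the statement at $\delta$, and the top part uses Lemma \ref{lem:bounding:computations}. The paper runs this induction along the finitely many ordinals $\beta[a_0]\cdots[a_{k'-1}]$ visited by $s$, as a maximal-bad-index argument on the finite $\Delta^0_1$ set $D$.

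\textbf{The successor step fails as you set it up.} In the subcase $y=\langle\delta+1,z\rangle$ you want two points of $S^2(\alpha,H)$ above $z$ and below $\min s$, and you feed them to Lemma \ref{lem:bounding:computations} for both $s^-$ and $t^-$.
\begin{itemize}
\item Such points need not exist. The interval $(z,\min s)$ may contain no usable element of $S^2(\alpha,H)$, for instance when $\min s$ is the least element of $S^2(\alpha,H)^{>n_{\a,\b}}$.
\item Your fallback, taking the first point to be $\min s$, does not help. For $s^-$ the lemma would then need an element of $S^2(\alpha,H)$ strictly between $\min s$ and $\min s^-$, and there may be none.
\end{itemize}
The missing idea is that bounding is needed only on the $t$-side, and that the hypothesis $s<t$ is exactly what supplies the points.
\begin{itemize}
\item From $s$ to $t$: if $\{z\}^{Y^{s^-}_\delta}(0)\stops_{\min s^-}$, that computation has use below $\min s^-$. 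There $Y^{s^-}_\delta$ and $Y^{t^-}_\delta$ agree by the statement at $\delta$. So the same computation witnesses $\{z\}^{Y^{t^-}_\delta}(0)\stops_{\min t^-}$.
\item From $t$ to $s$: apply Lemma \ref{lem:bounding:computations} to the $\delta$-size sequence $t^-$ with the two points $\min s<\min s^-$. These lie in $S^2(\alpha,H)$, we have $z,0<\min s$, and $\min s^-\le\max s<\min t^-$ because $s<t$. Hence the $t$-computation already halts below $\min s^-$, and it transfers back to $Y^{s^-}_\delta$.
\end{itemize}
This is precisely the paper's use of $a_{k_0}<a_{k_0+1}<b_{n_0+1}$.

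\textbf{The induction principle is mis-justified.} Corollary \ref{induction} is proved in $\RCA+\RT^{!\om^\a}_k$, not in $\RCA+\WO(\Lambda)$, where arithmetical transfinite induction is not available. This is repairable in either of two ways.
\begin{itemize}
\item Your inductive statement is $\Pi^0_1$ in $H\oplus A$, because the machines $M_\beta$ are total. $\RCA+\WO(\Lambda)$ does prove $\Pi^0_1$ transfinite induction along $\Lambda$: from a counterexample, least-witness search yields a $\Delta^0_1$-definable descending sequence, total by $\Sigma^0_1$ induction.
\item Alternatively, follow the paper: for fixed $s,t$ only finitely many levels matter, so take a maximal bad index.
\end{itemize}

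\textbf{On the thresholds $n_{\a,\gamma}$ versus $n_{\a,\b}$.} Your worry cannot be settled by monotonicity, since the norm is not monotone in $\leqo$. However, the paper faces the same bookkeeping when it applies Lemma \ref{lem:bounding:computations} at level $\gamma_1$, so this is not a defect specific to your route.
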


\begin{proof}
Let us assume that we have $\beta\leqo\alpha$
and $k, n \in \N^+$ such that $s=\langle a\sb0,\dots,a\sb {k-1}\rangle$ and  $t=\langle b\sb0,\dots, b\sb {n-1}\rangle$ from $S\sp2(\alpha,H)\sp{>n_{\a,\b}}$ satisfying the hypotheses of the Lemma and such that they falsify the thesis.
First, let us observe that for all $\gamma\leqo \beta$ such that $|\gamma | < \min s$, there are $k'\leq k$ and $n'\leq n$ such that 
$$
\gamma=\beta[a\sb0]\cdots [a\sb{k'-1}]=\beta[b\sb0]\cdots[b\sb{n'-1}].
$$
Indeed, if $|\gamma| < \min s < \min t$ (by Remark \ref{pairingfunctionproperty} about the choice of the pairing function) and $\gamma\leqo \beta$ then, by Proposition \ref{prop: goodnorm}, $\gamma$ has to occur as $\beta[a\sb0]\cdots [a\sb{k'-1}]$ and as $\beta[b\sb0]\cdots [b\sb{n'-1}]$ for some $k'\leq k$ and $n'\leq n$. 

So, let $\gamma\leqo\beta$ and $k'\leq k$ and $n' \leq n$ be such that 
$$
\gamma=\beta[a\sb0]\cdots [a\sb{k'-1}]=\beta[b\sb0]\cdots [b\sb{n'-1}].
$$
The numbers $k'$ and $n'$ are unique with respect to $\gamma$ and to the sequences $s$ and $t$ so we will denote them by $k\sb\gamma$ and $n\sb\gamma$ respectively. 
We also set $k\sb\beta=n\sb\beta=0$ as $\beta[\emptyset]=\beta$. 


Let us observe that for all $\gamma\leqo\beta$ such that $k\sb\gamma$ and $n\sb\gamma$ are defined and for all $\la\delta,e\ra<a\sb{k\sb\gamma}$, again by Lemma \ref{lem:from:alpha:to:beta} we have the following equivalences.
$$
M\sb\beta(\la\delta,e\ra,s)\mbox{ accepts} \ifff          M\sb\gamma(\la\delta,e\ra,\langle a\sb{k\sb\gamma},\dots, a\sb k \rangle)\mbox{ accepts}
$$
and
$$
M\sb\beta(\la\delta,e\ra,t)\mbox{ accepts}\ifff  M\sb\gamma(\la\delta,e\ra,\langle b\sb{n\sb\gamma},\dots, b\sb n\rangle)\mbox{ accepts}.
$$
For all $\delta \leqo\gamma$ such that $|\delta|< a\sb{k\sb\gamma}$ (by Remark \ref{pairingfunctionproperty} about the choice of the pairing function) the numbers $k\sb\delta$ and $n\sb\delta$ are also well defined. 
Indeed, if $\gamma=\beta[a\sb0]\cdots [a\sb{k'-1}]$ and $|\delta|\leq a\sb{k'}$ is such that $\delta\leqo\gamma$ then there exists $k''$ such that $k'\leq k''\leq k$ and $\delta=\beta[a\sb0]\cdots [a\sb{k''-1}]$. 
Indeed, since $|\delta|\leq a\sb{k'}$  we may again use Proposition \ref{prop: goodnorm}.

By the above observations we thus rule out the case in which $M\sb\beta(\la \delta,e \ra,t)$ accepts some $\la\delta,e\ra$ below $a\sb{k\sb\gamma}$ while $\delta$ does not occur as $\beta [a\sb0]\cdots [a\sb{k'-1}]$ for some $k'\leq k$.

For each $\gamma$ such that $\gamma \leqo\beta$ and such that $k\sb\gamma=k\sb\gamma(s)$ and $n\sb\gamma=n\sb\gamma(t)$ are defined let us define sets
\begin{eqnarray*}
Y\sp {\gamma}\sb{s}&=&
\set{\la\delta,e\ra<a\sb{k\sb\gamma}\colon M\sb\gamma(\la\delta,e\ra,\langle a\sb{k\sb\gamma},\dots,a\sb k\rangle)\mbox{ accepts}},\\
Y\sp{\gamma}\sb{t}&=&      \set{\la\delta,e\ra<b\sb{n\sb\gamma }\colon M\sb\gamma(\la\delta,e\ra,\langle b\sb{n\sb\gamma},\dots,b\sb n\rangle)\mbox{ accepts}}.
\end{eqnarray*}
The above sets are uniformly $\Delta^0_1$-definable with a parameter bounding computations of machines $M\sb\gamma (y,\langle b\sb{n\sb\gamma},\dots,b\sb n\rangle)$ on finite sets of inputs.

We claim that for each $\gamma \leqo \beta$ such that $k_\gamma \leq k$ the sets $Y\sp {\gamma}\sb{s}$ and $Y\sp{\gamma}\sb{t}$ coincide on all $\langle \delta, e\rangle < a_{k_\gamma}$.
Towards a contradiction, we define the set of indices $D$ on which the corresponding $Y\sp {\gamma}\sb{s}$ and $Y\sp{\gamma}\sb{t}$ differ:
\begin{multline*}
D=\{k'\leq k\colon \exists \gamma\leq a\sb{k'} 
(k'=k\sb\gamma  \land  \\
\exists \delta<a\sb{k\sb\gamma}\exists e<a\sb{k\sb\gamma}(\delta\leqo\gamma\land
\la\delta,e\ra<a\sb {k\sb\gamma} \land  \\
\neg(\la\delta,e\ra\in Y\sp{\gamma}\sb{s} \ifff \la\delta,e\ra\in Y\sp{\gamma}\sb{t}))\}.
\end{multline*}
The set $D$ is $\Delta^0_1$-definable with a sufficiently large parameter. Suppose by way of contradiction that $D$ is nonempty.

Let $k\sb0=k\sb{\gamma\sb0}$ be a maximal element of $D$ and let $n\sb0=n\sb{\gamma\sb0}$.
By the construction of the machine $M\sb{1}$,  $1\leo \gamma\sb0$ and $\gamma\sb0$ has to be a successor ordinal.
Otherwise, if $\gamma\sb0$ is limit then we would also have $k\sb0+1\in D$ contradicting the maximality of $k\sb0$. 
Thus, let $\gamma\sb0=\gamma\sb 1\pluso 1$ and let $\la\delta,e\ra<a\sb{k\sb0}$ be witnesses for $k\sb0\in D$, that is $\la\delta,e\ra<a\sb{k\sb0}$ and 
$$
\neg(\la\delta,e\ra\in Y\sp{\gamma\sb0}\sb{s} \ifff \la\delta,e\ra\in Y\sp{\gamma\sb0}\sb{t}),
$$
where $Y\sp{\gamma\sb 0}\sb{s}$ and $Y\sp{\gamma\sb 1}\sb{t}$ are defined as above. 

We claim that $\delta=\gamma\sb0$. 
Otherwise, we would have $k\sb{\delta}\in D$ for $k\sb\delta>k\sb{\gamma\sb0}$, as $k\sb\delta$ is well defined. Note that $\delta = \gamma_0$ implies $\langle \gamma_0, e\rangle < a_{k_0}$.

We distinguish two cases and derive a contradiction in both of them.

\smallskip

\textbf{Case 1:} $\langle\gamma\sb0,e\rangle\in Y\sp{\gamma\sb0}\sb{t}$. 
Then
$$
\set{e}\sp{Y\sp{\gamma\sb 1}\sb{t}}(0)\stops\sb{b\sb{n\sb0+1}}.
$$
We need to prove that 
$$
\set{e}\sp{Y\sp{\gamma\sb 1}\sb{s}}(0)\stops\sb{a\sb{k\sb0+1}}.
$$
Note that $\gamma\sb 1 < \gamma\sb 1\pluso 1$ by properties of the system of fundamental sequences and $\gamma\sb 1\pluso 1 = \gamma_0 <a\sb{k\sb0}$ as observed above. Moreover, $\langle b\sb{n\sb0+1},\dots, b\sb n\rangle$ is a $\gamma_1$-size sequence, and $a\sb{k\sb0} < a\sb{k\sb0+1} < b\sb{n\sb0+1}$ since $s<t$ by hypotheses of the Lemma. 
Therefore we can apply Lemma \ref{lem:bounding:computations} and conclude that
$$
\set{e}\sp {Y\sp{\gamma\sb 1}\sb{t}}(0)\stops\sb{a\sb{k\sb0+1}}.
$$
We argue that each query of type $\langle\delta,z\rangle$ asked during the computation of $\set{e}\sp {Y\sp{\gamma\sb 1}\sb{t}}(0)\stops\sb{a\sb{k\sb0+1}}$ is answered in the same way by the oracles $Y\sp{\gamma\sb 1}\sb{t}$ and $Y\sp{\gamma\sb 1}\sb{s}$. 
Indeed, let us consider $\langle\delta, z\rangle<a\sb{k\sb0+1}$. 
We may assume that $\delta\leqo\gamma\sb 1$ since otherwise $\langle\delta,z\rangle$ belongs neither to $Y\sp{\gamma\sb 1}\sb{t}$ nor to $Y\sp{\gamma\sb 1}\sb{s}$. 
Then $\delta<a\sb{k\sb0+1}$ and there are $k\sb\delta$ and $n\sb\delta$ such that 
$$
\delta=\gamma\sb 1[a\sb{k\sb0+1}]\cdots [a\sb{k\sb\delta-1}]=
\gamma\sb 1[b\sb{n\sb0+1}]\cdots [b\sb{n\sb\delta-1}].
$$
By the maximality of $k\sb0$ we get that $\langle\delta,z\rangle\in Y\sp{\gamma\sb 1}\sb{t}$ if and only if $\langle\delta, z\rangle\in Y\sp{\gamma\sb 1}\sb{s}$.
So, we proved that $\la \gamma\sb0,e\ra\in Y\sp{\gamma\sb0}\sb{s}$, contradicting the choice of $\langle\gamma\sb0,e\rangle$.
\smallskip

\textbf{Case 2:} $\langle\gamma\sb0,e\rangle\in Y\sp{\gamma\sb0}\sb{s}$, so
$\set{e}\sp {Y\sp{\gamma\sb 1}\sb{s}}(0)\stops\sb{a\sb{k\sb0+1}}$.
Then, we argue as in the previous case that 
$\set{e}\sp{ Y\sp{\gamma\sb 1}\sb{t}}(0)\stops\sb{a\sb{k\sb0+1}}$ and, therefore, $\la\gamma\sb0,e\ra \in Y\sp{\gamma\sb0}\sb{t}$, contradicting again the choice of~$\langle\gamma\sb 0,e\rangle$.
\end{proof}

Now, we can prove the full version of the lemma we need.

\begin{lemma}[$\RCA+\WO(\Lambda)$]\label{lem:eq:homo:wit}
Let $H\subseteq\N$ be an infinite $c\sb{\alpha \pluso3}$-\homo\ set. 
Then, for each $\beta \leqo \alpha$ and each $\beta$-size sequences $s, t$ in $S\sp2(\alpha,H)\sp{>n_{\a,\b}}$, it holds that
$$
{M\sb\beta(\langle\gamma,e\rangle,s)\mbox{ accepts}} \ifff
{M\sb\beta(\langle\gamma,e\rangle,t)\mbox{ accepts}},
$$
for all $\langle\gamma,e\rangle<\min(s,t)$.
\end{lemma}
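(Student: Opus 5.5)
The plan is to derive this from Lemma \ref{lem:lem:eq:homo:wit}, which already gives the equivalence when $s<t$. The new difficulty is that $s$ and $t$ may overlap or interleave. Fix $\beta\leqo\alpha$, $\beta$-size sequences $s,t$ in $S^2(\alpha,H)^{>n_{\a,\b}}$, and $\langle\gamma,e\rangle<\min(s,t)$. The idea is to pass through a third sequence $u$ lying entirely above both $s$ and $t$.

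Since $S^2(\alpha,H)^{>n_{\a,\b}}$ is infinite and every infinite set has an $\beta$-large initial segment, I would take any $m\in S^2(\alpha,H)$ with $m>\max(s\cup t)$. Then I would let $u$ be the unique $\beta$-size initial segment of $S^2(\alpha,H)^{>m}$. This is a finite search, so $u$ exists in $\RCA$ once we know that initial segment is $\beta$-large.

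Now $s<u$ and $t<u$, and both $s$ and $u$ (and both $t$ and $u$) are $\beta$-size sequences in $S^2(\alpha,H)^{>n_{\a,\b}}$. Lemma \ref{lem:lem:eq:homo:wit} applied to the pair $(s,u)$ gives
\[
M_\beta(\langle\gamma,e\rangle,s)\mbox{ accepts}\ifff M_\beta(\langle\gamma,e\rangle,u)\mbox{ accepts}
\]
for all $\langle\gamma,e\rangle<\min s$. Applied to $(t,u)$, it gives the analogous equivalence for all $\langle\gamma,e\rangle<\min t$. Since $\langle\gamma,e\rangle<\min(s,t)$ satisfies both bounds, chaining the two equivalences through $u$ yields the claim.

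The only step needing care is checking that the hypotheses of Lemma \ref{lem:lem:eq:homo:wit} are met exactly. Its only requirements are that $s<u$ and that both sequences are $\beta$-size in $S^2(\alpha,H)^{>n_{\a,\b}}$, and both hold by the choice of $u$. No further bound on $\min u$ is needed, because the range of $\langle\gamma,e\rangle$ in that lemma is governed by the smaller sequence. So I expect no real obstacle: the lemma is transitivity through a common upper witness.
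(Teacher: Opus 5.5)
Your proposal is correct and is essentially the paper's proof: both take a third $\beta$-size sequence lying above $\max(s\cup t)$ and chain the two instances of Lemma~\ref{lem:lem:eq:homo:wit} to get the equivalence below $\min(s,t)$. You choose the witness $u$ inside $S^2(\alpha,H)$, whereas the paper writes $S(\alpha,H)$ there; your choice is the one that literally matches the hypotheses of that lemma.
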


\begin{proof}
The lemma follows easily from its simpler version, Lemma~\ref{lem:lem:eq:homo:wit}.
Indeed, if we take two $\beta$-size sequences $s,t$ in $S\sp2(\alpha,H) \sp{>n_{\a,\b}}$ then it is enough to take a third $\beta$-size sequence $r$ in $S(\alpha,H)\sp{>n_{\a,\b}}$ such that $\max(s,t)<\min r$. 
Now, both pairs $s, r$ and $t, r$ 
satisfy the assumptions of Lemma \ref{lem:lem:eq:homo:wit}. 
Then, for all $\la\gamma,e\ra<\min(s,t)$
we have
\begin{align*}
M\sb\beta(\langle\gamma,e\rangle,s)\mbox{ accepts}& \ifff M\sb\beta(\langle\gamma,e\rangle,r)\mbox{ accepts}\\
&\ifff M\sb\beta(\langle\gamma,e\rangle,t)\mbox{ accepts}.
\end{align*}
This gives us the thesis.
\end{proof}

\begin{lemma}[$\RCA+\WO(\Lambda)$]\label{lem:homo:wit:delta1}
Let $\alpha\in\Lambda$ and let $H \subseteq \N$ be a $c\sb{\alpha\pluso3}$-\homo\ set. 
Then, for each $\beta\leqo\alpha$ we have the following equality
\begin{multline*}
\set{y\colon \exists t \subseteq S\sp2(\alpha,H)\sp{>\max \set{y,n_{\a,\b}}}\ M\sb\beta(y,t)\mbox{ accepts}}= \\
\set{y\colon \forall t \subseteq S\sp2(\alpha,H)\sp{>\max \set{y,n_{\a,\b}}}\ M\sb\beta(y,t)\mbox{ accepts}}.
\end{multline*}
\end{lemma}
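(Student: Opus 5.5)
The plan is to deduce the equality from the indiscernibility property in Lemma \ref{lem:eq:homo:wit}. Read the quantifiers over $t$ as ranging over $\beta$-size sequences $t \subseteq S^2(\alpha,H)^{>\max\{y,n_{\a,\b}\}}$, since $M_\beta(y,t)$ rejects on inputs of any other type. The inclusion from right to left is then immediate once such a $t$ exists. To see that one exists, note that $S^2(\alpha,H)$ is infinite, because $\WO(\Lambda)$ gives infiniteness of $S(\alpha,X)$ for every infinite $X$, applied twice. Its part above $\max\{y,n_{\a,\b}\}$ is therefore infinite and has a $\beta$-size initial segment, as every infinite set has a $\beta$-large initial segment and hence a $\beta$-size one. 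Consequently the universal statement implies the existential one.

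For the inclusion from left to right, fix $y$ and a witness $t_0$ for the existential side, so $t_0$ is $\beta$-size in $S^2(\alpha,H)^{>\max\{y,n_{\a,\b}\}}$ and $M_\beta(y,t_0)$ accepts. Let $t$ be any other $\beta$-size sequence in the same set. Both $t_0$ and $t$ lie in $S^2(\alpha,H)^{>n_{\a,\b}}$, and $y<\min(t_0,t)$ because all their elements exceed $y$.

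Machine $M_\beta$ only accepts inputs of the form $y=\langle\gamma,e\rangle$ (on other inputs both sides are false, so the equivalence holds trivially). For such $y$, Lemma \ref{lem:eq:homo:wit} gives that $M_\beta(y,t_0)$ accepts if and only if $M_\beta(y,t)$ accepts. Since the former holds, $M_\beta(y,t)$ accepts. As $t$ was arbitrary, $y$ belongs to the right-hand set.

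The argument needs no induction beyond what is used in Lemma \ref{lem:eq:homo:wit}, so it goes through in $\RCA+\WO(\Lambda)$. The two sets are defined by $\Sigma^0_1$ and $\Pi^0_1$ formulas in $H$; their equality is simply a pointwise equivalence of formulas, so no set existence beyond $\RCA$ is invoked. The main point to check is that the hypotheses of Lemma \ref{lem:eq:homo:wit} are exactly met, namely the bound $y<\min(s,t)$ and membership in $S^2(\alpha,H)^{>n_{\a,\b}}$. This is guaranteed by taking the cutoff $\max\{y,n_{\a,\b}\}$.
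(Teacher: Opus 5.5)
Your proof is correct and matches the paper, which simply says the equality is a direct consequence of Lemma~\ref{lem:eq:homo:wit}; you apply that lemma to the witness $t_0$ and an arbitrary $\beta$-size $t$, which is exactly the intended argument. You also fill in a step the paper leaves implicit: for the $\forall\Rightarrow\exists$ direction, $S^2(\alpha,H)^{>\max\{y,n_{\alpha,\beta}\}}$ must contain a $\beta$-size sequence, and this uses that $H$ is infinite.
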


\begin{proof}
A direct consequence of Lemma \ref{lem:eq:homo:wit}.
\end{proof}

Now, we may define a set which will turn out to be the $\alpha$-th Turing jump.
Let us point out that we need to iterate the operation $S(\alpha,X)$ three times to obtain the desired set. 

\begin{definition}
Let $H\subseteq\N$ be an infinite $c\sb{\alpha \pluso3}$-\homo\ set where the coloring $c\sb{\alpha\pluso3}$ is relative to an oracle $A$ and let $\beta\leqo\alpha$.
We define a set $T(A,\alpha,H)$ as follows
$$
y\in T(A,\alpha,H) \ifff
\exists t \subseteq S\sp3(\alpha,H)\sp{>\max \set{y,n_{\a,\a}}}\ M\sb\alpha(y,t)\mbox{ accepts}.
$$ 
\end{definition}

We need the following technical lemma.
It allows us to move between $M\sb\alpha$ and $M\sb\beta$, for $\beta \leqo \alpha$ on inputs $\la \gamma,z \ra$ such that $\gamma\leqo\beta$.

\begin{lemma}[$\RCA+\WO(\a)$]\label{lem:T:beta:gamma}
Let $H \subseteq \N$ be an infinite $c\sb{\alpha\pluso3}$-\homo\ set, where $c\sb{\alpha\pluso3}$ has access to an oracle $A$ and let $\gamma \leqo \beta \leqo \alpha$.
Let $y=\la \gamma,z \ra$, then
\begin{multline*}
\exists t \subseteq S\sp3(\alpha,H)\sp{>\set{y,n_{\a,\b}}}M\sb\beta(y,t)\mbox{ accepts }
\ifff \\
\exists t' \subseteq S\sp3(\alpha,H)\sp{>\set{y,n_{\a,\a}}}M\sb\alpha(y,t')\mbox{ accepts }.
\end{multline*}
\end{lemma}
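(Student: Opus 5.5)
The plan is to pass from an $\alpha$-size sequence to a $\beta$-size tail via Lemma \ref{lem:from:alpha:to:beta}, and to use the indiscernibility of Lemma \ref{lem:eq:homo:wit} so that the choice of witnesses does not matter. Set $X=S^3(\alpha,H)$ and note $X\subseteq S^2(\alpha,H)\subseteq S(\alpha,H)$. Recall also that $S(\alpha,\cdot)$ inserts between consecutive elements an $\alpha$-size block of consecutive elements of its argument. By Lemma \ref{lem:scattering:properties}, applied to $S^2(\alpha,H)$, we therefore have the following: between any two consecutive elements $a_i<a_{i+1}$ of $X$ with $a_i\ge n_{\a,\b}\ge|\beta|$ there is a finite $s\subseteq S^2(\alpha,H)$, lying in $(a_i,a_{i+1})$, with $\alpha[s]=\beta$.

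For the direction ($\Leftarrow$), suppose $t'\subseteq X^{>\max\{y,n_{\a,\a}\}}$ is $\alpha$-size and $M_\alpha(y,t')$ accepts, with $y=\la\gamma,z\ra$ and $\gamma\leqo\beta$. Since $\min t'>y\ge|\gamma|$ by Remark \ref{pairingfunctionproperty}, we may need $|\beta|\le\min t'$ as well; this holds because $n_{\a,\a}$ can be assumed to dominate $n_{\a,\b}$, or alternatively we first discard finitely many elements. Proposition \ref{prop: goodnorm} then yields an initial segment $s\sqsubseteq t'$ with $\alpha[s]=\beta$; write $t'=s\conc t$. Then $t$ is $\beta$-size, and Lemma \ref{lem:from:alpha:to:beta} gives that $M_\beta(y,t)$ accepts. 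We have $t\subseteq X$ and $\min t>y$, but we also need $\min t>n_{\a,\b}$. This follows once $n_{\a,\b}\le n_{\a,\a}$, or, failing that, by replacing $t'$ with a later witness, which Lemma \ref{lem:eq:homo:wit} allows.

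For the direction ($\Rightarrow$), suppose $t\subseteq X^{>\max\{y,n_{\a,\b}\}}$ is $\beta$-size and $M_\beta(y,t)$ accepts. The first step is to move to a $\beta$-size $t_1\subseteq X$ such that there are at least two elements of $X$ above $\max\{y,n_{\a,\a}\}$ below $\min t_1$. Acceptance transfers from $t$ to $t_1$ by Lemma \ref{lem:eq:homo:wit}, applied to $S^2(\alpha,H)$ with $y<\min(t,t_1)$. Next, the scattering property gives $s\subseteq S^2(\alpha,H)$ with $\alpha[s]=\beta$ and $\max\{y,n_{\a,\a}\}<\min s<\max s<\min t_1$. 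Then $s\conc t_1$ is $\alpha$-size, and by Lemma \ref{lem:from:alpha:to:beta} the machine $M_\alpha(y,s\conc t_1)$ accepts. The catch is that $s$ lies in $S^2(\alpha,H)$ rather than in $X=S^3(\alpha,H)$. To repair this, apply Lemma \ref{lem:eq:homo:wit} once more, this time to $M_\alpha$ on $S^2(\alpha,H)^{>n_{\a,\a}}$. This lets us replace $s\conc t_1$ by any $\alpha$-size $t'\subseteq X^{>\max\{y,n_{\a,\a}\}}$, since both sequences lie in $S^2(\alpha,H)$, both have minimum above $y$, and indiscernibility holds for all $y<\min$. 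The resulting $t'$ is the required witness.

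The main obstacle I expect is bookkeeping rather than substance. One part is ensuring the norm bounds: $|\beta|,|\gamma|$ must lie below the relevant minima so that Proposition \ref{prop: goodnorm} and Lemma \ref{lem:scattering:properties} apply. The other is checking that each application of Lemma \ref{lem:eq:homo:wit} is to sequences lying in $S^2(\alpha,H)$ above the correct threshold $n_{\a,\cdot}$. This is exactly why the definition uses $S^3$: one extra scattering level supplies room for the connecting segment $s$ while staying within $S^2(\alpha,H)$, where indiscernibility is available.
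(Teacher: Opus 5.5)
Your proposal is correct and follows essentially the same route as the paper. For ($\Leftarrow$) you split off the prefix $s$ with $\alpha[s]=\beta$ via Proposition \ref{prop: goodnorm} and apply Lemma \ref{lem:from:alpha:to:beta}. For ($\Rightarrow$) you move the witness far out by indiscernibility, insert a connecting segment from $S^2(\alpha,H)$ via Lemma \ref{lem:scattering:properties}, and use indiscernibility once more to return to $S^3(\alpha,H)$, which is what the paper does with Lemma \ref{lem:homo:wit:delta1}.

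One bookkeeping point should be tightened. The two consecutive elements of $S^3(\alpha,H)$ you place below $\min t_1$ must also lie above $|\beta|$ (e.g.\ above $n_{\a,\b}$), not just above $\max\{y,n_{\a,\a}\}$, because $|\beta|$ need not be bounded by $y$ or $n_{\a,\a}$. Choosing $t_1$ further out handles this.
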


\begin{proof}
For the right to left direction let $t'= \langle t'\sb0,\dots,t'\sb n\rangle\subseteq  S\sp3(\alpha,H)\sp{>\set{y,n_{\a,\a}}}$ be such that  $M\sb\alpha(y,t')$ accepts. 
By Lemma~\ref{lem:eq:homo:wit}, we may choose $t'\sb0$ such that $|\beta| \leq y < t'\sb0$. 
Then, there exists an initial segment $\langle t'\sb0,\dots, t'\sb m\rangle$ of $t'$ such that $\alpha[t'\sb0]\cdots [t'\sb m]=\beta$.
Thus, the final segment $\langle t'\sb{m+1},\dots, t'\sb n\rangle$ of $t'$ is $\beta$-size. 
Moreover, by Lemma \ref{lem:from:alpha:to:beta}, $M\sb\beta(y,\langle t'\sb{m+1},\dots,t'\sb n\rangle)$ accepts.

For the other direction, let $t=\langle t\sb0,\dots,t\sb k\rangle\subseteq S\sp3 (\alpha, H)\sp{>\set{y,n_{\a,\b}}}$ be such that $M\sb\beta(y,t)$ accepts. 
By Lemma \ref{lem:homo:wit:delta1} take a sequence $\bar{t}\in S\sp 3(\alpha, H)\sp{> \set{y, n_{\a,\b}}}$ such that $|\alpha| < \min \bar{t}$ and $M\sb\beta(y,\bar{t})$ accepts.
Then, by Lemma \ref{lem:scattering:properties}, we can choose $\bar{s}\subseteq S\sp 2(\alpha, H)\sp{> \set{y, n_{\a,\a}}}$ such that $\alpha[\bar{s}]=\beta$ and $\bar{s} < \bar{t}$.
Then, Lemma \ref{lem:from:alpha:to:beta}, gives 
$$
M\sb\alpha(y,\bar{s} \conc \bar{t})\mbox{ accepts.}
$$
By Lemma~\ref{lem:homo:wit:delta1} we can choose any $\alpha$-size sequence from $S\sp2(\alpha,H)\sp{>\set{y,n_{\a,\a}}}$ to witness the above acceptance. 
In particular, we can choose from the subset $S\sp3(\alpha,H)\sp{>\set{y,n_{\a,\a}}}$ of $S\sp2(\alpha,H)\sp{>\set{y,n_{\a,\a}}}$.
\end{proof}

By Lemma~\ref{lem:homo:wit:delta1}, $T(A,\alpha,H)$ is $\Delta^0\sb1$-definable and therefore it is a set under $\RCA+\WO(\Lambda)$ (notice that $S\sp3(\alpha, H)\sp{>\set{y, n_{\a,\a}}}\subseteq S\sp2(\alpha, H)\sp{>\set{y, n_{\a,\b}}}$).
Thus, it does not matter what sequence $t$ we use in checking whether $y\in T(A,\alpha,H)$. 
Moreover, we have the following useful property. 
It states that while checking the membership in $T(A,\alpha,H)$ for a bounded number of elements (e.g.\ for a bounded oracle set) we may fix just one sequence $t$.

\begin{lemma}[$\RCA+\WO(\a)$]\label{lem:T:bounding}
Let $H\subseteq\N$ be an infinite $c\sb{\alpha \pluso3}$-\homo\ set, where $c\sb{\alpha\pluso3}$ has access to an oracle $A$ and let $\beta\leqo\alpha$. 
The following hold.
\begin{enumerate}
\item\label{3.20.1} Let $t$ be an $\a$-size sequence in $S\sp3(\alpha, H)\sp{> n_{\a,\a}}$.
For all $y<\min t$
$$
y\in T(A,\alpha,H) \ifff M\sb\alpha(y,t) \mbox{ accepts}.
$$
\item\label{3.20.2} Let $t$ be a $\beta$-size sequence in $S\sp3(\alpha, H)\sp{>n_{\a,\b}}$.
For all $\la \gamma,e \ra<\min t$ such that $\gamma\leqo\beta$ the following equivalence holds.
$$
\la \gamma,e \ra \in T(A,\alpha,H) \ifff M\sb\beta(\langle\gamma,e\rangle,t) \mbox{ accepts}.
$$
\end{enumerate}
\end{lemma}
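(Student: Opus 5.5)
Both items come from the indiscernibility results (Lemma \ref{lem:eq:homo:wit} and Lemma \ref{lem:homo:wit:delta1}) together with the bridge between $M_\alpha$ and $M_\beta$ in Lemma \ref{lem:T:beta:gamma}. The idea is always the same: the definition of $T(A,\alpha,H)$ uses a witness $t'$ lying above $\max\{y,n_{\a,\a}\}$, while the given $t$ only lies above $y$ and the threshold. Lemma \ref{lem:eq:homo:wit} lets us swap any two sequences of the right size in $S^2(\alpha,H)^{>n}$ for inputs below the minimum of both. Since $S^3(\alpha,H)\subseteq S^2(\alpha,H)$, the given $t$ is a legitimate competitor.

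For item (\ref{3.20.1}), fix $t$ $\alpha$-size in $S^3(\alpha,H)^{>n_{\a,\a}}$ and $y<\min t$. Then $t\subseteq S^3(\alpha,H)^{>\max\{y,n_{\a,\a}\}}$, so if $M_\alpha(y,t)$ accepts, $t$ itself witnesses $y\in T(A,\alpha,H)$. Conversely, suppose $y\in T(A,\alpha,H)$, witnessed by some $t'$ above $\max\{y,n_{\a,\a}\}$. Both $t$ and $t'$ are $\alpha$-size sequences in $S^2(\alpha,H)^{>n_{\a,\a}}$, and $y<\min(t,t')$. Lemma \ref{lem:eq:homo:wit} with $\beta=\alpha$ then gives that $M_\alpha(y,t)$ accepts iff $M_\alpha(y,t')$ accepts, so $M_\alpha(y,t)$ accepts.

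For item (\ref{3.20.2}), let $y=\la\gamma,e\ra<\min t$ with $\gamma\leqo\beta$ and $t$ $\beta$-size in $S^3(\alpha,H)^{>n_{\a,\b}}$. By definition, $y\in T(A,\alpha,H)$ means there is some $t'\subseteq S^3(\alpha,H)^{>\max\{y,n_{\a,\a}\}}$ with $M_\alpha(y,t')$ accepting. Lemma \ref{lem:T:beta:gamma} turns this into the existence of a $\beta$-size $t''\subseteq S^3(\alpha,H)^{>\max\{y,n_{\a,\b}\}}$ with $M_\beta(y,t'')$ accepting. Now $t$ also lies above $\max\{y,n_{\a,\b}\}$, and both $t,t''$ are $\beta$-size sequences in $S^2(\alpha,H)^{>n_{\a,\b}}$ with $y$ below both minima. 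Applying Lemma \ref{lem:eq:homo:wit} again, $M_\beta(y,t)$ accepts iff $M_\beta(y,t'')$ accepts. Combined with the previous step, this gives the equivalence; each implication is reversible, with $t$ itself serving as $t''$ in the backward direction.

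The step requiring most care is the bookkeeping of thresholds. Lemma \ref{lem:T:beta:gamma} is stated with the bounds $\set{y,n_{\a,\b}}$ and $\set{y,n_{\a,\a}}$, to be read as maxima. We also need $n_{\a,\b}$ to be large enough for Lemma \ref{lem:eq:homo:wit} at level $\beta$. This holds because that lemma is stated exactly with $n_{\a,\b}$, so no extra enlargement is needed. One should also check that the $t''$ produced by Lemma \ref{lem:T:beta:gamma} lies above $n_{\a,\b}$ rather than only above $n_{\a,\a}$. This is immediate from its statement, since the left-hand side uses $n_{\a,\b}$.
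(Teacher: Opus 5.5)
Your proof is correct and follows the paper's argument. The paper derives item (1) from the $\Sigma^0_1$/$\Pi^0_1$ equivalence of Lemma \ref{lem:homo:wit:delta1}, which is just the packaged form of the swapping via Lemma \ref{lem:eq:homo:wit} that you use directly; for item (2) it likewise combines that indiscernibility with Lemma \ref{lem:T:beta:gamma} to pass between $M_\beta$-witnesses and $M_\alpha$-witnesses, and your explicit threshold bookkeeping (bounds read as maxima, $S^3(\alpha,H)\subseteq S^2(\alpha,H)$) is what the paper leaves implicit.
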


\begin{proof}
The first part follows from the equivalence between $\Sigma\sp0\sb1$ and $\Pi\sp0\sb1$  definitions of $T(A,\alpha,H)$ following from Lemma \ref{lem:homo:wit:delta1}. 
To prove the second part let us fix a $\beta$-size sequence $t$ in $S\sp3(\alpha, H)^{> n_{\a,\b}}$. 
Then, by Lemma~\ref{lem:homo:wit:delta1}, for each $\langle\gamma,e\rangle<\min t$ with $\gamma\leqo\beta$, 
\begin{align*}
M\sb{\beta}(\langle\gamma,e\rangle,t)\mbox{ accepts} & \ifff \ \exists t' \subseteq S\sp 3(\alpha, H)\sp{> \set{\la \gamma, e\ra, n_{\a,\b}}}\, M\sb{\beta}(\la \gamma, e \ra,t')\mbox{ accepts.}
\end{align*}
Now, we use the equivalence from Lemma \ref{lem:T:beta:gamma} to get, for all $\la \gamma,e \ra < \min t$ with $\gamma \leqo \beta$, that
$$
M\sb\beta(\langle\gamma,e\rangle,t)\mbox{ accepts} \ifff \exists t \subseteq S\sp 3(\alpha,H)\sp{>\set{\langle\gamma,e\rangle,\alpha}}\, M\sb\alpha (\la \gamma, e\ra, t) \mbox{ accepts}.
$$
The last equivalence gives that, for each $\la \gamma, e \ra < \min t$ with $\gamma \leqo \beta$,
$$
M\sb\beta(\la \gamma,e \ra,t)\mbox{ accepts.}
\ifff \la\gamma,e\ra \in T(A,\alpha,H)  
$$
and this ends the proof.           
\end{proof}

Finally, we obtain the following corollary.

\begin{corollary}[$\RCA+\WO(\a)$]\label{cor:T:to:beta}
Let $H\subseteq\N$ be an infinite homogeneous set for $c\sb{\alpha \pluso3}$, where $c\sb{\alpha\pluso3}$ may use an oracle $A$, and $\beta\leqo\alpha$. 
For each $\gamma\leqo \beta$ and $e$
$$
\la\gamma,e\ra\in T(A,\alpha,H) \ifff \exists t\subseteq S\sp3(\alpha, H)\sp{> \set{\la \gamma,e \ra ,n_{\a,\b}}}\ M\sb\beta (\la \gamma,e \ra,t)\mbox{ accepts.}
$$
\end{corollary}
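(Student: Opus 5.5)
The corollary should follow by chaining the definition of $T(A,\alpha,H)$ with Lemma~\ref{lem:T:beta:gamma}. Fix $\gamma\leqo\beta\leqo\alpha$ and $e$, and put $y=\la\gamma,e\ra$. By definition, $y\in T(A,\alpha,H)$ holds exactly when there is $t'\subseteq S^3(\alpha,H)^{>\max\set{y,n_{\a,\a}}}$ such that $M_\alpha(y,t')$ accepts. This is precisely the right-hand side of the equivalence in Lemma~\ref{lem:T:beta:gamma}, whose hypotheses (that $H$ is infinite and $c_{\alpha\pluso3}$-homogeneous relative to $A$, and that $\gamma\leqo\beta\leqo\alpha$) are exactly those of the corollary. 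That lemma then converts it into the existence of $t\subseteq S^3(\alpha,H)^{>\max\set{y,n_{\a,\b}}}$ with $M_\beta(y,t)$ accepting, which is the claimed statement.

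Before invoking the lemma, I would check that the index sets agree. The lemma writes the superscripts as $>\set{y,n_{\a,\b}}$ and $>\set{y,n_{\a,\a}}$, which should be read as $>\max\set{\cdot,\cdot}$, exactly as in the definition of $T$ and in the statement of the corollary. No reindexing is then needed, because the $\alpha$-side condition coincides verbatim with the defining formula of $T(A,\alpha,H)$.

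As a sanity check, and as an alternative route, the right-to-left direction can also be seen through Lemma~\ref{lem:T:bounding}(\ref{3.20.2}). Given $t$ witnessing the right-hand side, we have $y<\min t$ and $t$ is a $\beta$-size subset of $S^3(\alpha,H)^{>n_{\a,\b}}$, so $y\in T(A,\alpha,H)\ifff M_\beta(y,t)$ accepts. The left-to-right direction likewise follows from part (\ref{3.20.2}), using any $\beta$-size $t$ lying above $\max\set{y,n_{\a,\b}}$; such a $t$ exists because $S^3(\alpha,H)$ is infinite.

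I do not expect a genuine obstacle. The only delicate point is the bookkeeping of the thresholds $n_{\a,\b}$ versus $n_{\a,\a}$ and of the strict lower bound $y<\min t$, and both are already handled inside Lemma~\ref{lem:T:beta:gamma} and Lemma~\ref{lem:T:bounding}.
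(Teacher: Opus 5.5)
Your proposal is correct and is essentially the paper's argument. The paper gives no separate proof: the corollary is the immediate combination of the definition of $T(A,\alpha,H)$ with Lemma~\ref{lem:T:beta:gamma} (equivalently, part~\ref{3.20.2} of Lemma~\ref{lem:T:bounding}), and you chain them in exactly that way. Your reading of the superscripts $>\{y,n_{\alpha,\beta}\}$ as $>\max\{y,n_{\alpha,\beta}\}$ is the intended one, and your alternative route through part~\ref{3.20.2} of Lemma~\ref{lem:T:bounding} is also sound.
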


Now, we can prove that the defined above set $T(A,\alpha,H)$ satisfies the definition of the $\alpha$-th Turing jump.


\begin{theorem}[$\RCA+\WO(\a)$]\label{thm:jump:from:homo}
Let $H\subseteq\N$ be an infinite \homo\ set for $c\sb{\alpha\pluso3}$. 
The set $T(A,\alpha,H)$ satisfies the formula defining the $\alpha$-th Turing jump of~$A$.
\end{theorem}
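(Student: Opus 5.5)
We have to check the two clauses of the $\Pi^0_2$ formula defining $Y=\TJ(A,\alpha)$ from Subsection~\ref{subsec: turing jump}, with $Y=T(A,\alpha,H)$. The first concerns pairs $\la 0,x\ra$; the second concerns pairs $\la\delta\pluso1,z\ra$ with $\delta\pluso1\leqo\alpha$. In both cases the plan is the same. Corollary~\ref{cor:T:to:beta} moves membership in $T(A,\alpha,H)$ down to a single machine $M_\beta$ for a suitable $\beta\leqo\alpha$, evaluated on one conveniently chosen $\beta$-size sequence $t\subseteq S^3(\alpha,H)$. Lemma~\ref{lem:T:bounding} then identifies the finite oracle $Y^{t^-}_\delta$ used inside $M_{\delta\pluso1}$ with an initial segment of $T(A,\alpha,H)\restriction\delta$. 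No transfinite induction is needed: each clause is verified pointwise, which is why only $\RCA+\WO(\alpha)$ is used.

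\emph{Base clause.} Take $y=\la 0,x\ra$ and $\beta=1$ (or any small successor). By Corollary~\ref{cor:T:to:beta}, $y\in T(A,\alpha,H)$ iff $M_1(y,\la t_0\ra)$ accepts for some $t_0$ in $S^3(\alpha,H)$ above $y$. By definition of $M_1$ this holds iff $x\in A$. Hence $\la0,x\ra\in T(A,\alpha,H)\iff x\in A$.

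\emph{Successor clause.} Fix $\delta\pluso1\leqo\alpha$ and $z$, and put $y=\la\delta\pluso1,z\ra$. Apply Corollary~\ref{cor:T:to:beta} with $\beta=\delta\pluso1$, choosing $t=\la t_0\ra\conc t^-$ to be a $(\delta\pluso1)$-size sequence in $S^3(\alpha,H)^{>\max\{y,n_{\alpha,\delta\pluso1}\}}$. By the successor case of $M_{\delta\pluso1}$ (note $y<t_0$), $y\in T(A,\alpha,H)$ iff
\[\{z\}^{Y^{t^-}_\delta}(0)\downarrow_{\min t^-}.\]
By Lemma~\ref{lem:T:bounding}(2), applied to the $\delta$-size sequence $t^-$, the set $Y^{t^-}_\delta$ agrees with $T(A,\alpha,H)\restriction\delta$ on all arguments below $\min t^-$. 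Since a computation bounded by $\min t^-$ has use below $\min t^-$, the displayed condition is equivalent to $\{z\}^{T(A,\alpha,H)\restriction\delta}(0)\downarrow_{\min t^-}$.

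Two directions remain. From left to right the implication is immediate. From right to left, suppose $\{z\}^{T\restriction\delta}(0)\downarrow$ with some code $C$. By Lemma~\ref{lem:homo:wit:delta1} the choice of $t$ does not matter, so we may choose $t^-$ with $\min t^->C$. The oracle agreement still holds, hence $\{z\}^{Y^{t^-}_\delta}(0)\downarrow_{\min t^-}$, and so $y\in T(A,\alpha,H)$. The membership is therefore independent of the chosen $t$, and we obtain
\[\la\delta\pluso1,z\ra\in T(A,\alpha,H)\iff\{z\}^{T(A,\alpha,H)\restriction\delta}(0)\downarrow.\]

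There are two bookkeeping points. First, elements $\la\lambda,z\ra$ with $\lambda$ limit, and pairs not of the form $\la\gamma,z\ra$ with $\gamma\leqo\alpha$, are never accepted by any $M$. This matches Definition~\ref{dfn:TuringJump}, where limit stages add no new pairs. Second, the restriction $\delta\pluso1\leqo\alpha$ guarantees that Corollary~\ref{cor:T:to:beta} applies.

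The main obstacle is the oracle identification in the successor clause: we must check that Lemma~\ref{lem:T:bounding}(2) applies to $t^-$, which lies in $S^3(\alpha,H)$ above $n_{\alpha,\delta}$. This requires $n_{\alpha,\delta}\le n_{\alpha,\delta\pluso1}$, or else choosing $t$ above both thresholds, which is always possible since the sets are infinite. It also requires that $Y^{t^-}_\delta$ contains only pairs with first coordinate $\leqo\delta$, which follows from the definition of the machines $M$. I would handle this by choosing $t$ above $\max\{y,n_{\alpha,\delta},n_{\alpha,\delta\pluso1},C\}$ from the outset.
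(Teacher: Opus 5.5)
Your proposal is correct and follows essentially the same route as the paper's proof. You reduce membership of $\langle\delta\pluso1,z\rangle$ in $T(A,\alpha,H)$ to acceptance by $M_{\delta\pluso1}$ on a $(\delta\pluso1)$-size sequence from $S^3(\alpha,H)$, unfold the successor clause, use part (2) of Lemma~\ref{lem:T:bounding} to identify $Y^{t^-}_\delta$ below $\min t^-$ with $T(A,\alpha,H)$ restricted to ordinals $\leqo\delta$, and for the converse pick $t$ with $\min t^-$ above the code of the halting computation; choosing $t$ above $n_{\alpha,\delta}$ as well as $n_{\alpha,\delta\pluso1}$ is, if anything, more careful than the paper's appeal to $|\beta\pluso1|>|\beta|$.
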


\begin{proof}
It is obvious that a tuple $\la 0,z \ra \in T(A,\alpha,H)$ if and only if $z\in A$.

Now, we show that $T(A,\alpha,H)$ satisfies the recursive clause in the definition of Turing jump.
Recall that $\TJ(X,\alpha\pluso1)$, is the set 
$$\TJ(X,\alpha) \cup \set{\la \alpha \pluso 1, e \ra \colon \set{e}^{\TJ(X, \alpha)}(0)\downarrow }.$$

Let us take $y=\la \beta \pluso 1, z \ra$ for some $\beta\leo\alpha$. 
We have the following equivalences
\begin{align*}
\langle \beta \pluso 1, z \rangle \in T(A,\alpha,H)
& \ifff \exists t' \subseteq S^3(\alpha, H)^{> \{y,n_{\alpha,\alpha}\}}
   \ M_{\alpha}(y,t') \text{ accepts} \\
& \ifff \exists t \subseteq S^3(\alpha,H)^{> \{y,n_{\alpha,\beta \pluso 1}\}}
   \ M_{\beta \pluso 1}(y,t) \text{ accepts} \\
& \ifff \exists t \subseteq S^3(\alpha,H)^{> \{y,n_{\alpha,\beta \pluso 1}\}}
   \ \{z\}^{Y^*}(0)\!\downarrow_{\min t^-}, \\ 
& \qquad \text{where }
   Y^*=\{ w < \max t^- : M_\beta(w,t^-) \text{ accepts} \} \\
& \ifff \exists t \subseteq S(\alpha,H)^{> \{y,n_{\alpha,\beta \pluso 1}\}}
   \ \{z\}^{Y}(0)\!\downarrow_{\min t^-}, \\ 
& \qquad \text{where }
   Y=\left\{\langle\gamma,e\rangle < \min t :
   \gamma \leqo \beta \right. \\
& \qquad\qquad\left. \text{and } 
   \langle\gamma,e\rangle \in T(A,\alpha,H)\right\}.
\end{align*}
The first equivalence is just the definition of $T(A,\alpha,H)$. 
The second one is by Lemma \ref{lem:T:beta:gamma}. 
The third one is  by the definition of $M\sb{\beta+1}$. 
The fourth equivalence is based on the observation that if $\{z\}^{Y^*}(0)\stops_{\min(t^-)}$ with $Y^*=\set{w < \max t^- \colon M\sb\beta(w, t^-)\mbox{ accepts}}$ then also $\{z\}^{Y}(0)\stops_{\min(t^-)}$, where $Y=\set{w < \min t^- \colon M\sb\beta(w, t^-)\mbox{ accepts}}$.
The fifth equivalence is by \ref{3.20.2} of Lemma \ref{lem:T:bounding}, as we choose a sequence $t$ such that $\min t > |\beta \pluso 1| > |\beta|$ (recall that in Subsection \ref{ssec:systems} we assumed without loss of generality that the second inequality holds).
From the last formula we easily infer 
\begin{equation}\label{eq:T}
\{z\}\sp{Y}(0)\stops, \ \mbox{ where }Y=\set{\langle\gamma,e\rangle\in T(A,\alpha,H)\colon \gamma\leqo\beta}.
\end{equation}

For the other direction, let $y=\la\beta + 1,z\ra$ and let us assume the above formula~\eqref{eq:T}. 
Then, we infer that there exists $t\sb 0>\max(y,|\beta \pluso 1|)$ such that $\{z\}\sp{Y}(0)\stops\sb{t\sb 0}$.
Now we can compute, starting from $t\sb 0$, the whole $(\beta \pluso 1)$-size sequence $\langle t\sb0,\dots,t\sb n\rangle$ in $S\sp3(\alpha,H)$ such that $\langle\beta \pluso 1,z\rangle<t\sb 0$ and $\{z\}\sp{Y}(0)\stops\sb{t\sb 1}$ and, by \ref{3.20.2} of Lemma \ref{lem:T:bounding}, since $t^- = \langle t_1, \dots, t_n \rangle$ is $\beta$-size, the following holds: 
\begin{multline*}
Y\cap\set{0,\dots, t_1}=\\
\set{\langle\gamma,e\rangle\in T(A,\alpha,H)\colon \gamma\leqo\beta\mbox{ and } \langle\gamma,e\rangle<b\sb 1} =\\
\set{\langle\gamma,e\rangle<b\sb 1 \colon M\sb\beta(\langle\gamma,e\rangle,t^-)\mbox{ accepts}} .
\end{multline*}
This gives us that 
\begin{multline*}
\exists t \subseteq S\sp3(\alpha,H)\sp{>\set{y,n_{\a,\beta \pluso 1}}}\ 
\{z\}\sp{Y}(0)\stops\sb{ \min t},\\ \mbox{ where }  Y=\set{w\colon M\sb\beta(w, t)\mbox{ accepts}}. 
\end{multline*}

By Corollary~\ref{cor:T:to:beta} we get that the set $Y$ is $T(A,\alpha,H)$ restricted to elements $\langle \gamma,z\rangle<b\sb 1$, where $\gamma\leqo\beta$. 
Thus, 
$$
\la\beta\pluso1,z\ra\in T(A,\alpha,H).
$$

Let us observe that we do not need to consider the case for a limit ordinal $\lambda\leqo\alpha$. 
Indeed, the inductive condition for a jump for a limit ordinal $\lambda$ is just a sum of lower stages. 
This is satisfied by any set which is a candidate for being 
a $\lambda$ jump set.
Therefore, we showed that $T(A,\alpha,H)$ satisfies the recursive clause in the definition of the $\a$-th Turing jump. 
\end{proof}


\begin{corollary}\label{lowboundtj}
For all $0 < \alpha  < \Gamma_0$, $\RT^{!\omega^\alpha}_2$ implies $\Pi^0_{\omega^\alpha} \CA$ over $\RCA$.
\end{corollary}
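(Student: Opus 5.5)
We argue by induction on $\alpha$ (the earlier Lemma \ref{wellorder a} and Corollary \ref{induction} already presuppose this inductive structure), working in $\RCA+\RT^{!\om^\a}_2$. Fix an arbitrary set $A$; we must produce $\TJ(A,\om^\a)$. The first step is to secure the hypotheses under which the whole machinery of this subsection was developed. Lemma \ref{wellorder a} gives $\WO(\om^\a)$. Corollary \ref{induction} and Remark \ref{technical lemmas} give nestedness and Proposition \ref{prop: goodnorm} for the system of fundamental sequences restricted to $\Lambda=\om^\a+\om$, or any convenient notation slightly above $\om^\a$ that contains $\om^\a\pluso3$. Well-foundedness of $\om^\a+\om$ follows from $\WO(\om^\a)$ trivially. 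The three listed properties of $\Lambda$ hold for the standard notation, because successors have larger codes and are recognizable.

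Next, apply the principle to the $A$-computable coloring $c_{\om^\a\pluso3}$ of Definition \ref{def:colorings}. The issue is that $\RT^{!\om^\a}_2$ speaks of $\om^\a$-size sets, whereas $c_{\om^\a\pluso3}$ colors $(\om^\a\pluso3)$-size sets. An $(\om^\a\pluso3)$-size set is exactly $\la a_0,a_1,a_2\ra\conc s$ with $s$ being $\om^\a$-size, that is, a $(\om^\a\uplus3)$-size set. So I would reduce as follows: every $\om^\a$-size set with minimum above a fixed bound contains a $3$-element initial segment followed by an $\om^\a$-size tail only if $\om^\a[x_0][x_1][x_2]\geq$ something useful, which fails in general. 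Instead I would use $\RT^{!\om^\a}_2\to\RT^{!\om^\a+3}_2$ directly, via a standard shift argument. Given $c$ on $(\om^\a\pluso3)$-size sets, define $d(u)=c(u')$, where $u'$ is obtained from an $\om^\a$-size set $u$ by a computable re-indexing: take $X$ sparse, say $X$ consisting of elements spaced so that the map $x\mapsto$ "previous three elements of $X$ prepended" is injective and well defined. I expect a cleaner route to be the following. Take $H$ homogeneous for $d$ on $[X]^{!\om^\a}$, where $d(s)=c(\la p_3(s),p_2(s),p_1(s)\ra\conc s)$ and $p_i(s)$ denotes the $i$-th predecessor of $\min s$ in a fixed sparse computable $X$. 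Then define $H'$ by taking every fourth element and checking homogeneity. This is the step I expect to be the main obstacle. Because $\om^\a$-largeness depends on the actual values of the elements, a shifted set need not keep the same size, and the reduction has to be arranged so that the homogeneity of $H$ transfers. If the direct shift fails, a fallback is to note that the whole subsection works verbatim for the coloring $c_{\beta\pluso3}$ with $\beta=\om^\a[n]$-type ordinals. An even simpler fallback is to run the argument with $\alpha'$ where $\alpha'\pluso3\leqo\om^\a$: for instance, use $\RT^{!\om^\a}_2\vdash\RT^{!\om^{\a'}\cdot m+3}_2$ for large finite $m$ via the norm trick of Lemma \ref{wellorder a}, since $\om^{\a'}\cdot m+3<\om^\a$ when $\a=\a'+1$, or cofinal ordinals when $\a$ is a limit. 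Together with $\Pi^0_{\om^{\a'}\cdot m}$ jumps for all $m$, this must then be assembled into the $\om^\a$ jump by arithmetical transfinite induction.

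With an infinite $c_{\om^\a\pluso3}$-homogeneous set $H$ in hand, Theorem \ref{thm:jump:from:homo} shows that $T(A,\om^\a,H)$ satisfies the $\Pi^0_2$ formula defining $\TJ(A,\om^\a)$. Lemma \ref{lem:homo:wit:delta1} shows that $T(A,\om^\a,H)$ is $\Delta^0_1$ in $H\oplus A$, so it exists by $\Delta^0_1$ comprehension. Since $A$ was arbitrary, we conclude $\forall X\exists Y\,(Y=\TJ(X,\om^\a))$, and with $\WO(\om^\a)$ this is $\Pi^0_{\om^\a}\CA$.
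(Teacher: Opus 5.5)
Your outline is the paper's proof, which has exactly two steps. First, $\WO(\om^\a)$ comes from Lemma \ref{wellorder a}. Second, ``the results of this section'' are applied, i.e.\ Theorem \ref{thm:jump:from:homo} on an infinite $c_{\om^\a\pluso3}$-homogeneous set, with Lemma \ref{lem:homo:wit:delta1} giving $\Delta^0_1$-comprehension. Your added bookkeeping is fine: $\Lambda=\om^\a+\om$, its well-foundedness, and the three properties of the notation. You are also right that $c_{\om^\a\pluso3}$ colors $(\om^\a\pluso3)$-size sets and so is not literally an instance of $\RT^{!\om^\a}_2$. The paper's proof does not spell this step out.

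However, none of your proposed bridges works, so your argument has a genuine gap exactly at ``with an infinite $c_{\om^\a\pluso3}$-homogeneous set $H$ in hand''. No $\om^\a$-size set $u$ contains the needed set. If $u$ were $(\om^\a\pluso3)$-large, then $u$ minus its first three elements would be an $\om^\a$-large proper subset of an $\om^\a$-size set, which is impossible. Your predecessor coloring $d(s)=c(\la p_3(s),p_2(s),p_1(s)\ra\conc s)$ is a legitimate coloring of $\om^\a$-size sets. But a $d$-homogeneous set only controls $c$ on those $(\om^\a\pluso3)$-size sets whose three leading entries are the $X$-predecessors of $\min s$, one prefix per tail. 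Homogeneity of $H'$ for $c$ needs $c(\la a_0,a_1,a_2\ra\conc s)$ constant for \emph{all} triples $a_0<a_1<a_2<\min s$ from $H'$. Those triples stay arbitrary however you thin $H'$, including taking every fourth element.

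Your fallback fails for a structural reason. The norm trick does give, for each $\beta<\om^\a$, some homogeneous set for $c_{\beta\pluso3}$ and hence $\TJ(A,\beta)$. That only yields $\forall\beta<\om^\a\,\exists Y\,(Y=\TJ(A,\beta))$. Producing the single set $\TJ(A,\om^\a)$ is a set-existence step. Arithmetical transfinite induction shows the stages are unique and cohere, but it cannot form their union, whose natural definition quantifies over sets. For $\a=1$ this is exactly the gap between $\ACA'$ and $\ACA^+$. The $\om$-model of arithmetical sets satisfies full induction and contains every $\TJ(\emptyset,n)$, but not $\TJ(\emptyset,\om)$, so no argument of that shape can succeed.

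What you need is one homogeneous set from which all levels below $\om^\a$ are read off uniformly. Lemma \ref{lem:coloring:monotonicity} provides this once a $c_{\om^\a\pluso3}$-homogeneous set exists. To close the gap you have three options:
\begin{enumerate}
\item Give a genuine derivation of such a set from $\RT^{!\om^\a}_2$.
\item Redefine the coloring on $\om^\a$-size sets themselves, with the leading entries as the controls and the level of the oracle machine read off the fundamental sequence. The limit clause $M_\lambda(y,\la x_0\ra\conc s)=M_{\lambda[x_0]}(y,s)$ already works this way. You would then have to re-verify Lemmas \ref{lem:colored:one}--\ref{lem:T:bounding} for the new coloring.
\item Use the paper's second proof, via Theorem \ref{wop mm11} and Theorem \ref{th:homog sets}. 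By Lemma \ref{lem: prefix coloring}, its coloring genuinely colors $\om^\a$-size sets, so it avoids the issue. It costs four colors, which is harmless.
\end{enumerate}
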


\begin{proof}
By Lemma \ref{wellorder a} we have $\RT^{!\omega^\alpha}_2$ implies $\WO(\omega^\alpha)$ over $\RCA$. 
By the results of this section we have that $\RT^{!\omega^\alpha}$ implies $\forall X \exists Y (Y = X^{(\omega^\alpha)})$ 
over $\RCA + \WO(\omega^\alpha)$. 
\end{proof}

We observe that the proof presented in this section uses the principle $\RT^{!\alpha}_2$ only on $\N$ and does not need the generalized form on countable $X\subseteq\N$.

\subsection{Proof through well-ordering principles}

The second lower bound proof is based on a characterization of the systems $\Pi^0_{\omega^\alpha} \CA$ in terms of well-ordering preservation principles, due to Marcone and Montalbàn \cite{marconemontalban}.
They showed that for each $\alpha$ the system $\Pi^0_{\omega^\alpha} \CA$ is equivalent to the statement \lq\lq for each linear order $\X$ if $\X$ is a well-ordering then $\varphi_\a (\X)$ is a well-ordering\rq\rq, where $\varphi_\a (\X)$ is an operator, depending on $\a$, that maps linear orders to linear orders. 
We call the previous statement the {\em well-ordering principle for} $\varphi_\a(\X)$ and denote its standard formalization in second-order arithmetic by $\mathsf{WOP} (\X \to \varphi_\a(\X))$.
We now describe the operator $\varphi_\a(\X)$.
The general definition can be found in \cite{marconemontalban}.
Here we focus on the specific case we need.
For the rest of the section, let $\X$ be a linear ordering and let $\le_\X$ be its order relation.

\begin{definition}\label{wop def}
We define $\varphi_\a(\X)$ to be the set of formal terms defined as follows:
\begin{itemize}
    \item $0$ and $\varphi_\a(x)$ for $x \in \X$ belong to $\varphi_\a(\X)$ and are called constants,
    \item if $\mathtt{t}_1,\mathtt{t}_2 \in \varphi_\a(\X)$ then $\mathtt{t}_1+\mathtt{t}_2 \in \varphi_\a(\X)$,
    \item if $\mathtt{t} \in \varphi_\a(\X)$ and $\d < \a$ then $\varphi_\d(\mathtt{t}) \in \varphi_\a(\X)$.
\end{itemize}
\end{definition}

We denote terms by letters like $\mathtt{s}$, $\mathtt{t}$ and $\mathtt{u}$.

The normal form of a term and the order relation $\le_{\varphi_\a(\X)}$ on $\varphi_\a(\X)$ are defined simultaneously.
We say that a term $\mathtt{t} = \mathtt{t}_0 + \ldots + \mathtt{t}_n$ is in \textit{normal form} if either $\mathtt{t} = 0$ or $\mathtt{t}_0 \ge \ldots \ge \mathtt{t}_n$ and each $t_i$ is either a constant or of the form $\varphi_\d(\mathtt{s}_i)$ for $\d < \a$ and $\mathtt{s}_i$ a term in normal form with $\mathtt{s}_i \ne \varphi_{\d'}(\mathtt{s}_i')$ for $\d' > \d$.
To write a term in normal form we apply the following rules:
\begin{itemize}
    \item $+$ is associative and $0$ is the neutral element,
    \item if $\varphi_{\d'}(\mathtt{s}) <_{\varphi_\a(\X)} \varphi_\d(\mathtt{u})$ then $\varphi_{\d'}(\mathtt{s}) + \varphi_\d(\mathtt{u}) = \varphi_\d(\mathtt{u})$,
    \item if $\d'>\d$ then $\varphi_\d (\varphi_{\d'}(\mathtt{s})) = \varphi_{\d'}(\mathtt{s})$,
    \item if $\d < \a$ then $\varphi_\d(\varphi_\a(\mathtt{s})) = \varphi_\a(\mathtt{s})$.
\end{itemize}

Given $\mathtt{t} = \mathtt{t}_0+\ldots+\mathtt{t}_n$ and $\mathtt{s}=\mathtt{s}_0+\ldots+\mathtt{s}_m$ written in normal form, we say that $\mathtt{t} \le \mathtt{s}$ if and only if one of the following applies:
\begin{itemize}
    \item $\mathtt{t}=0$,
    \item $\mathtt{t} = \varphi_\a(x)$ and for some $y \ge_\X x$, $\varphi_\a(y)$ occurs in $\mathtt{s}$,
    \item $\mathtt{t} = \varphi_\d(\mathtt{t}')$, $\mathtt{s}_0 = \varphi_{\d'}(\mathtt{s}')$ and 
    $\begin{cases}
    \d < \d' \text{ and } \mathtt{t}' \le_{\varphi_\a(\X)} \mathtt{s}_0 \text{ or } \\
    \d = \d' \text{ and } \mathtt{t}' \le_{\varphi_\a(\X)} \mathtt{s}' \text{ or } \\
    \d > \d' \text{ and } \mathtt{t} \le_{\varphi_\a(\X)} \mathtt{s}',
    \end{cases}$
    \item $n > 0$ and $\mathtt{t}_0 <_{\varphi_\a(\X)} \mathtt{s}_0$,
    \item $n > 0$, $\mathtt{t}_0 = \mathtt{s}_0$, $m > 0$ and $\mathtt{t}_1 + \ldots +\mathtt{t}_n \le_{\varphi_\a(\X)} \mathtt{s}_1 + \ldots \mathtt{s}_m$.
\end{itemize}

For $\mathtt{s}, \mathtt{t}\in \varphi_\a(\X)$ we may write $\mathtt{t} < \mathtt{s}$ for $\mathtt{t} <_{\varphi_\a(\X)} \mathtt{s}$ for ease of readability.

The following result is crucial for our lower bound proof of $\RT^{!\omega^\alpha}$. 

\begin{theorem}\label{wop mm11}\cite[Theorem 6.16]{marconemontalban}
Let $\a < \G_0$.
Then $\Pi^0_{\om^\a} \CA$ and $ \mathsf{WOP} (\X \to \varphi_\a(\X))$ are equivalent over $\RCA$.
\end{theorem}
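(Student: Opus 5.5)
The statement is \cite[Theorem 6.16]{marconemontalban}; if I had to reprove it I would argue by induction on $\a<\G_0$ (external in $\a$), proving both directions simultaneously and using Lemma \ref{equivsystems} to pass freely between $\Pi^0_{\om^\b}\CA$ and $\Pi^0_{\om^\b\cdot n}\CA$. The base case $\a=0$ is the familiar equivalence of $\ACA$ with $\mathsf{WOP}(\X\to\om^\X)$ (Girard, Hirst). The case $\a=1$ should likewise reduce to the known equivalence of $\ACA^+$ with $\mathsf{WOP}(\X\to\varepsilon_\X)$. These two cases fix the templates for the general step.

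For the direction $\Pi^0_{\om^\a}\CA\to\mathsf{WOP}(\X\to\varphi_\a(\X))$, I would argue contrapositively. Suppose $\X$ is a well-ordering and $(\mathtt{t}_i)_i$ is a descending sequence in $\varphi_\a(\X)$. I would take $Z=\TJ(f\oplus\X,\om^\a)$, where $f$ codes the sequence, and use it to compute a descending sequence in $\X$, a contradiction.

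The key tool is a ``collapsing'' lemma: from a descending sequence in $\varphi_\d(\mathcal{Y})$, an $\om^\d$-th jump of the sequence computes a descending sequence in the linear order $\mathcal{Y}$ of arguments appearing in it. The proof would go by analysing the normal forms. If $\d=\d'+1$, the order $\varphi_\d(\mathcal{Y})$ is obtained from iterating the $\varphi_{\d'}$-construction $\om$ times, so one uses $\om$ applications of the $\om^{\d'}$-jump lemma, i.e.\ an $\om^{\d'}\cdot\om=\om^\d$ jump. If $\d$ is a limit, one uses the jumps $\om^{\d[n]}$ along the fundamental sequence. In each step, the jump is used to decide, for each term, whether the tail of the sequence below it is eventually constant in its leading component. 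The existence of the needed jumps is given by $\Pi^0_{\om^\a}\CA$, and the required arithmetical transfinite induction is available since $\ACA$ holds.

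For the reverse direction, assume $\mathsf{WOP}(\X\to\varphi_\a(\X))$, fix $Z$, and suppose toward a contradiction that $\TJ(Z,\om^\a)$ does not exist. By the inductive hypothesis we already have all $\om^\b\cdot n$ jumps for $\b<\a$; note that $\mathsf{WOP}$ for $\varphi_\a$ implies $\mathsf{WOP}$ for $\varphi_\b$ by restriction of terms. I would build a $Z$-computable linear order $\X$ from finite approximations to the $\om^\a$-jump: its elements are codes of stages at which a guessed finite piece of the jump hierarchy still looks correct, ordered Kleene--Brouwer style. If $\X$ were ill-founded, a descending sequence would compute the true hierarchy, so the failure of the jump makes $\X$ well-ordered. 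On the other hand, I would exhibit a $Z$-computable descending sequence in $\varphi_\a(\X)$. Each stage $s$ is mapped to a term whose nested $\varphi_\d$'s, for $\d<\a$, record the current guess at the level-$\d$ part of the hierarchy. A change of guess at level $\d$ strictly lowers the term at the $\varphi_\d$-position, while convergence at lower levels is what forces the outer $\varphi_\a(x)$-constants to drop.

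The main obstacle is this last construction: designing the term assignment so that it is strictly decreasing for all stages in the limit case, where the fundamental sequence of $\a$ has to be tracked uniformly. My first attempt would be to mimic the $\varepsilon_\X$ argument for $\ACA^+$ level by level, indexing by $\d<\a$ rather than by $n<\om$.
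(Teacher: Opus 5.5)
The paper gives no proof of this statement. It is imported from Marcone and Montalb\'an and used as a black box, so your opening citation is already the paper's entire argument. Taken as a standalone reproof, however, your sketch has a genuine gap, and it is in the reversal.

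Everything there rests on designing the $Z$-computable order $\X$ together with a stage-to-term map that is strictly decreasing in $\varphi_\a(\X)$, and this is exactly the step you leave open. The heuristic that a change of guess at level $\d$ lowers the term at the $\varphi_\d$-position does not yet handle two situations. The first is stages where guesses at several levels change at once. The second, for limit $\a$, is ever larger indices $\d<\a$ entering as the stage grows, while strict decrease in the normal-form order must still be guaranteed.

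The inductive hypothesis you set up does not reduce this work. $\mathsf{WOP}$ for each $\varphi_\b$ gives you each $\TJ(Z,\om^\b\cdot n)$ separately, but never the whole family as one set. So nothing can be glued into $\TJ(Z,\om^\a)$, and the construction of $\X$ has to carry the full weight. Moreover, $\Pi^0_{\om^\a}\CA$ as defined in the paper contains $\WO(\om^\a)$ as an axiom, so the reversal must also derive it from $\mathsf{WOP}$. One way is to apply $\mathsf{WOP}$ to a one-point order, embed $\a$ via $\b\mapsto\varphi_\b(0)$, and then use Girard--Hirst.

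In the forward direction the missing ingredient is uniformity. An external induction on $\a$ yields a separate proof of your collapsing lemma for each standard $\d$. The limit step, however, needs the lemma for all $\d[n]$ inside one proof. The $\om$-fold iteration at successor steps likewise needs a single procedure that can be iterated inside the theory.

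Switching to an internal induction on $\d$ does not rescue the lemma as you state it. It quantifies over all descending sequences and all orders $\mathcal{Y}$, so it is a $\Pi^1_1$ assertion, and the arithmetical transfinite induction you invoke does not apply to it. The standard fix is as follows. Define a single family of functionals $\Phi_\d$ by effective transfinite recursion on $\d$, and fix $J=\TJ(f\oplus\X,\om^\a)$. Then prove, by arithmetical transfinite induction relative to $J$, that $\Phi_\d$ turns any descending sequence in $\varphi_\d(\cdot)$ computable from $J\restriction\b$ into a descending sequence in the argument order computable from $J\restriction(\b+\om^\d)$. This statement quantifies only over levels of $J$ and indices of $J$-computable sequences, so it is arithmetical in $J$ and the induction applies.
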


Our second lower bound proof for $\a$-size Ramsey's theorems is based on the characterization of $\Pi^0_{\omega^\a} \CA$ from Theorem \ref{wop mm11}. 
We thus aim to prove over  $\RCA$ that $\RT^{!\omega^\alpha}_4$ implies $\mathsf{WOP} (\X \to \varphi_\a(\X))$. 
More precisely we show, under the assumption of $\RT^{!\omega^\alpha}_4$, that for each linear order $\X$, if there exists an infinite descending sequence in $\varphi_\alpha (\X)$, then there exists an infinite descending sequence in $\X$.

This strategy has been applied 
in \cite{carlucci2025} to obtain lower bounds for $\RT^n_k$ and $\RT^{! \omega}_2$. The details of the argument are lifted from the finite to the infinite domain from the lower bound proved on an extension of the Paris-Harrington principle in the recent \cite{marcone2025}. 
The situation is analogous to \cite{carlucci2025}, where the beautiful combinatorial proof of the Paris-Harrington theorem by Loebl and Ne\v{s}etril \cite{Loe-Nes:92} is lifted from the finite to the infinite domain to obtain an implication from $\RT^{! \om}_2$ to the well-ordering principle for the operator $\X \to \omega^\X$. 
The latter is equivalent to $\ACA$ by results of Girard and Hirst (see \cite{marconemontalban}).

The proof of $\RT_4^{!\om^\a} \vdash \Pi^0_{\omega^\a} \CA$ is by external induction on $\a<\G_0$.
The base case $\a=1$ is \cite[Theorem 3.6]{carlucci2014}, recalling that $\Pi^0_{\omega} \CA$ is $\ACA^+$. Thus, for the rest of this section we fix a positive $\a < \G_0$ and we suppose that for all $0 < \b < \a$, $\RT^{!\om^\b}_4 \vdash \Pi^0_{\omega^\b} \CA$.
Then we are left to prove $\RT_4^{!\om^\a} \vdash \Pi^0_{\omega^\a} \CA$.
Notice that the largeness notions involved are only for ordinals $\le \om^\a$.
Therefore, it suffices to consider the system of fundamental sequences on $\om^\a$ induced by the system of Definition \ref{system of fund seq}.


Our goal is to show that starting from an infinite strictly descending sequence in $\varphi_\a(\X)$ we can construct an instance $\overline{c} \colon [M]^{! \om^\a} \to 4$ such that every infinite homogeneous set computes an infinite strictly descending sequence in $\X$.

We now adapt the machinery of the peeling functions of \cite{marcone2025} to finite sequences of terms in $\varphi_\a(\X)$.
For the rest of the section, terms refer to elements of $\varphi_\a(\X)$ written in normal form according to the current subsection.

\begin{definition}\label{overline def}
Let $\mathtt{t},\mathtt{s} \in \varphi_\a (\X)$.
If $\mathtt{t} > \mathtt{s}$ let $i$ be the least index such that $\mathtt{t}_i > \mathtt{s}_i$.
If $\mathtt{t}_i = \varphi_0(\mathtt{t}')$ then $\overline{\mathtt{t},\mathtt{s}} = \mathtt{t}'$, otherwise $\overline{\mathtt{t},\mathtt{s}} = \mathtt{t}_i$.
If $\mathtt{t} \le \mathtt{s}$ then $\overline{\mathtt{t},\mathtt{s}} = 0$.
\end{definition}

Notice that the overline function is just a syntactic check, so it is computable.
		
Next, we introduce the so called \textit{peeling functions}, which are length-preserving functions on finite sequences of elements of $\varphi_\a(\X)$.
They can be seen as generalizations of functions used in \cite{Loe-Nes:92,carlucci2025}.
We denote finite sequences in $\varphi_\a(\X)$ with upper case letters of the form $\mathtt{A}$ and $\mathtt{B}$ from the beginning of the alphabet.
We also introduce the concept of the {\em collection of subterms} of a $\mathtt{t} \in \varphi_\a(\X)$.
To do that we use the notion of multiset, i.e.\ a set in which elements can occur multiple (though, finitely many) times.
Recall that if $A$ and $B$ are multisets then $A+B$ is the multiset where each element has multiplicity the sum of its multiplicities in $A$ and $B$.

\begin{definition}
We recursively define the multiset $\Sub (\mathtt{t})$ of subterms of a term $\mathtt{t}$ as follows:
\begin{itemize}
    \item if $\mathtt{t} = 0$, let $\Sub(\mathtt{t}) = \{0\}$,
    \item if $\mathtt{t} = \varphi_\a(x)$ with $x \in \X$ (a non zero constant), let $\Sub(\mathtt{t}) = \{\mathtt{t},x\}$,
    \item if $\mathtt{t} = \varphi_\d (\mathtt{t}')$ with $\d < \a$, let $\Sub(\mathtt{t}) = \{\mathtt{t}\} + \Sub(\mathtt{t}')$,
    \item if $\mathtt{t} = \mathtt{t}_0 + \ldots + \mathtt{t}_n$ with $n > 0$, let $\Sub(\mathtt{t}) = \{\mathtt{t}\} + \Sub(\mathtt{t}_0) + \ldots + \Sub(\mathtt{t}_n)$.
\end{itemize}
\end{definition}

Notice that $\Sub(\mathtt{t})$ is finite and depends only on the syntactic form of $\mathtt{t}$, so it is computable.

\begin{definition}\label{peeling def}
We define functions $\pbar_\d : \varphi_\a(\X)^{<\om} \to \varphi_\a(\X)^{<\om}$ by induction on $\d \le \a$ as follows.
Let $\pbar_0$ be the identity function, and let
$$\pbar_1 (\mathtt{t}_0, \mathtt{t}_1, \ldots, \mathtt{t}_\ell) = (\overline{\mathtt{t}_0,\mathtt{t}_1}, \overline{\mathtt{t}_1,\mathtt{t}_2}, \ldots, \overline{\mathtt{t}_\ell,0}).$$
For ordinals of the form $\rho + \om^\d$ with $\rho \ggeq \om^\d$, we let
$$\pbar_{\rho + \om^\d} = \pbar_{\om^\d} \circ \pbar_\rho.$$
On infinite ordinals of the form $\om^\d$, we first define
$$\pbar_{< \om^\d}(\mathtt{A}) = \lim_{\rho \to \om^\d} \pbar_\rho (\mathtt{A}).$$
We prove in Lemma \ref{basic peeling} below that for each ordinal $\nu \le \rho$ and each $i < |\mathtt{A}|$, $\pbar_\rho (\mathtt{A}) \in \Sub (\pbar_\nu (\mathtt{A}))$.
Therefore, $\pbar_\rho(\mathtt{A})$ is coordinate-wise non-increasing (with respect to the order relation we defined for terms) as a function of $\rho$, and since there are only finitely many subterms of a fixed term, the limit above exists.
Furthermore, we see in Lemma \ref{basic peeling} that each term in the tuple $\pbar_{< \om^\d} (\mathtt{A})$ is either an element of $\X$, a constant term or of the form $\varphi_\b(\mathtt{s})$ for some $\b \ge \d$ and $\mathtt{s}$ in normal form.
Then we define $\pbar_{\om^\d} (\mathtt{A})$ by peeling off one application of $\varphi_\d$ from each non-zero entry in $\pbar_{<\om^\d} (\mathtt{A})$: if the entry is of the form $\varphi_\d(\mathtt{s})$, then we get $\mathtt{s}$; if it is a non-zero constant or is of the form $\varphi_\b(\mathtt{s})$ for some $\b > \d$, then we get the same entry. 

We write $p_\d (\mathtt{A})$ (without the bar) for the first element of $\pbar_\d (\mathtt{A})$.
\end{definition}

Notice that to be consistent with the terminology we may actually say that $\pbar_1$ peels off one application of $\varphi_0$.
This is appropriate since $\pbar_1 = \pbar_{\om^0}$.

Also notice that the peeling functions are computable: this is because $\pbar_1$ is basically the overline function which is computable, while for indecomposable ordinals we know by Lemma \ref{basic peeling} that after some finite number of steps (which depends on the input $\mathtt{A}$), $\pbar_{< \om^\d}(\mathtt{A})$ stabilizes and then we peel off $\varphi_\d$.
Again we can syntactically check when $\pbar_{< \om^\d}(\mathtt{A})$ has stabilized.
		
\begin{lemma}[$\RCA + \RT^{! \om^\a}_4$]\label{basic peeling}
The following properties of the peeling functions hold.
\begin{enumerate}
	\item \label{31} For every $\rho \le \om^\a$ and $\nu < \rho$ we have that $\pbar_\rho (\mathtt{A}) (i) \in \Sub(\pbar_\nu (\mathtt{A}) (i))$ for each $i < |\mathtt{A}|$.
	\item \label{32} Each entry of $\pbar_{<\om^\d}(\mathtt{A})$ is either a constant or of the form $\varphi_\b(\mathtt{s})$ for some $\b \ge \d$ and $\mathtt{s}$ in normal form with $\mathtt{s} \ne \varphi_{\b'}(\mathtt{s}')$ for $\b' > \b$.
	\item \label{33} For each $i < |\mathtt{A}|$, $\pbar_{\om^\a}(\mathtt{A})(i) \in \X$.
\end{enumerate}
\end{lemma}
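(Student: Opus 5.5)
We prove the three items simultaneously by transfinite induction on the ordinal index $\rho \le \om^\a$. The statements are arithmetical in the finite tuple $\mathtt{A}$, so this induction is available by Corollary \ref{induction}; the inductive hypothesis gives $\RT^{!\om^\b}_4$ for $\b<\a$, and hence $\ACA$ together with $\WO(\om^\a)$ from Lemma \ref{wellorder a}. One subtlety is that $\pbar_{<\om^\d}$ is defined as a limit whose existence rests on item \eqref{31} for smaller ordinals. So the inductive statement at stage $\rho$ will say that $\pbar_\rho$ is well defined (total) and that \eqref{31}, \eqref{32} hold for all indices up to $\rho$. Since the maps are uniformly defined by a syntactic recursion, this statement is arithmetical.

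\textbf{Item \eqref{31}.} It suffices to treat one step of each clause; transitivity of the subterm relation then handles arbitrary $\nu<\rho$.

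For $\pbar_1$, we check from Definition \ref{overline def} that $\overline{\mathtt{t},\mathtt{s}}$ is either $0$, a summand $\mathtt{t}_i$ of $\mathtt{t}$, or the argument $\mathtt{t}'$ of a summand $\varphi_0(\mathtt{t}')$. Each of these lies in $\Sub(\mathtt{t})$ (we take the convention that $0$ belongs to every $\Sub$, or note that it arises as a leaf).

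For $\rho+\om^\d$, write $\pbar_{\rho+\om^\d}=\pbar_{\om^\d}\circ\pbar_\rho$. The relevant intermediate stages are $\pbar_{\rho+\nu'}$ with $\nu'<\om^\d$. Here we must check that the normal-form decomposition of Definition \ref{system of fund seq} makes $\pbar_{\rho+\nu'}=\pbar_{\nu'}\circ\pbar_\rho$, which is an associativity statement proved by a side induction on $\nu'$. With this in hand, the claim follows from the induction hypothesis applied to the tuple $\pbar_\rho(\mathtt{A})$.

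For $\om^\d$, the limit $\pbar_{<\om^\d}(\mathtt{A})$ exists because, coordinatewise, the values form a chain in the finite multiset $\Sub(\mathtt{A}(i))$ that is non-increasing under the subterm relation. Such a chain must stabilize after at most $|\Sub(\mathtt{A}(i))|$ strict changes. Since the number of changes is bounded by a finite number, the stabilization point is found by bounded search, and this is provable in $\RCA$. Peeling off one $\varphi_\d$ again yields a subterm.

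\textbf{Item \eqref{32}.} Suppose some entry of the stabilized tuple had outer form $\varphi_\g(\mathtt{s})$ with $\g<\d$, or were a proper sum. We show this is impossible. Choose $\rho<\om^\d$ beyond the stabilization point with $\rho=\rho_0+\om^\g$ (respectively $\rho_0+1$), where $\rho_0$ is also beyond stabilization. Then $\pbar_\rho=\pbar_{\om^\g}\circ\pbar_{\rho_0}$ strips the $\varphi_\g$ (respectively replaces the sum by a strict subterm via the overline map). This contradicts stabilization, since a strict subterm is a different term.

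Two points need checking here. First, the overline map applied to the last coordinate is $\overline{\mathtt{t}_\ell,0}$, and for the other coordinates comparisons with neighbours enter. The key fact is that when all entries are stable, $\overline{\mathtt{t},\mathtt{s}}$ must still return a genuine strict subterm whenever $\mathtt{t}$ is a sum or a $\varphi_0$-term and $\mathtt{t}>\mathtt{s}$. Second, we must handle the case $\mathtt{t}\le\mathtt{s}$, where the overline map gives $0$, which is also a change unless $\mathtt{t}=0$. The normal-form side condition on $\mathtt{s}$ comes directly from the normal-form rules.

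\textbf{Item \eqref{33}.} Apply \eqref{32} with $\d=\a$. Each entry of $\pbar_{<\om^\a}(\mathtt{A})$ is a constant: since the term grammar only allows $\varphi_\b$ with $\b<\a$, or the constant $\varphi_\a(x)$, there are no entries $\varphi_\b(\mathtt{s})$ with $\b\ge\a$ other than constants. Peeling $\varphi_\a$ then yields $x\in\X$ or $0$. The value $0$ must be treated as an element of $\X$, or handled by adjoining a bottom element as in \cite{marcone2025}.

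\textbf{Main obstacle.} I expect the main difficulty to be the stabilization argument in \eqref{32}, specifically showing that the overline map always makes progress on non-zero entries in the stable tuple. This is where the interplay between neighbouring coordinates appears, and I would follow the corresponding lemma in \cite{marcone2025} closely.
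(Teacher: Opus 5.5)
Your proposal is correct and follows the same route as the paper: items (1) and (2) by arithmetical transfinite induction along $\om^\a$ (available via Corollary~\ref{induction}), transcribing \cite[Lemma 6.5]{marcone2025} from ordinals to terms, and then item (3) read off from (2) with $\d=\a$. Your two caveats about $0$ are well taken and are imprecisions in the statement as written: $0\notin\Sub(\mathtt{t})$ when $\mathtt{t}$ is built only from constants $\varphi_\a(x)$, and a zero entry of $\pbar_{<\om^\a}(\mathtt{A})$ gives $\pbar_{\om^\a}(\mathtt{A})(i)=0\notin\X$, so the lemma implicitly needs the conventions you propose.
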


\begin{proof}
The proof is essentially the same as the proof of \cite[Lemma 6.5]{marcone2025}: the argument is given there for tuples of ordinals, but it can be easily adapted to tuples of terms.
Here we observe that it goes through in $\RCA + \RT^{! \om^\a}_4$.

Statements $\ref{31}$ and $\ref{32}$ are proved using arithmetical induction over the well-order $\om^\a$.
By Corollary \ref{induction} this induction is available in $\RCA + \RT^{! \om^\a}_4$.
\ref{33} is an immediate consequence of $\ref{32}$.
\end{proof}

We observe that no element of $\X$ is a term and so cannot be an argument of a peeling function, but it may happen that for some $\mathtt{A}$, some $i < |\mathtt{A}|$ and some $\d$, $\pbar_\d(\mathtt{A})(i) \in \X$.
However, this happens if and only if $\d = \om^\a$.

An important observation about the peeling functions is that each entry of $\pbar_\d(\mathtt{A})$ does not depend on the previous entries of the finite sequence $\mathtt{A}$.
In other words $\pbar_\d (\mathtt{A}^-) = (\pbar_\d (\mathtt{A}))^-$ and $\pbar_\d (\mathtt{A}^{-n}) = (\pbar_\d (\mathtt{A}))^{-n}$ where $\mathtt{A}^-$ denotes the finite sequence $\mathtt{A}$ without its first element, and $\mathtt{A}^{-(n+1)} = (\mathtt{A}^{-n})^-$.

\begin{definition}
Let $\mathtt{A}$ be a finite tuple of terms.
If $p_{\om^\a}(\mathtt{A}) \le_\X p_{\om^\a}(\mathtt{A}^-)$, let $\z_\mathtt{A} \le \om^\a$ be the least ordinal $\z$ such that $p_\z(\mathtt{A}) \le p_\z(\mathtt{A}^-)$.
If $p_{\om^\a} (\mathtt{A}) >_\X p_{\om^\a} (\mathtt{A}^-)$ then $\z_\mathtt{A}$ does not exist.
\end{definition}

As $p_\z(\mathtt{A}^-)$ is the second entry of $\pbar_\z(\mathtt{A})$, we have $p_{\z_\mathtt{A} + 1} (\mathtt{A}) = 0$.
If $\z_\mathtt{A}$ does not exist, then we have $p_{\om^\a} (\mathtt{A}) \in \X$ by Lemma \ref{basic peeling}.
		
\begin{definition}\label{def: coloring on ord}
Let $c$ be the following $4$-coloring of finite tuples of terms:
\begin{itemize}
	\item if $\z_\mathtt{A}$ does not exist, let $c(\mathtt{A}) = 0$,
	\item if $\z_\mathtt{A}$ and $\z_{\mathtt{A}^-}$ both exist and $\z_\mathtt{A} > \z_{\mathtt{A}^-}$, let $c(\mathtt{A}) = 1$,
	\item if $\z_\mathtt{A}$ and $\z_{\mathtt{A}^-}$ both exist and $\z_\mathtt{A}=\z_{\mathtt{A}^-}$, let $c(\mathtt{A}) = 2$,
	\item if $\z_\mathtt{A}$ exists and either $\z_{\mathtt{A}^-}$ does not or $\z_\mathtt{A} < \z_{\mathtt{A}^-}$, let $c(\mathtt{A}) = 3$.
\end{itemize}
\end{definition}

The coloring $c$ is computable since for each tuple $\mathtt{A}$ the ordinal $\z_\mathtt{A}$ is.

We define a function that assigns to each term $\mathtt{t} \in \varphi_\a(\X)$, a finite set of ordinals $\S(\mathtt{t})$ that contains all the ordinals which may be equal to $\z_\mathtt{A}$ for some tuple of terms $\mathtt{A}$ with $\mathtt{A}(0) = \mathtt{t}$.
		
\begin{definition}
We recursively define the set of ordinals $\S(\mathtt{t})$:
\begin{itemize}
	\item if $\mathtt{t}=0$, let $\S(\mathtt{t}) = \{0\}$,
    \item if $\mathtt{t}=\varphi_\a(x)$ for $x \in \X$ (i.e.\ a non zero constant), let $\S(\mathtt{t}) = \{0,1,\om^\a\}$,
    \item if $\mathtt{t} = \varphi_\d (0)$ for $\d < \a$, let $\S(\mathtt{t}) = \{0, 1, \om^\d\}$,
	\item if $\mathtt{t} = \varphi_\d (\mathtt{s})$ with $\d < \a$ and $\mathtt{s}$ in normal form, let $\S(\mathtt{t}) = \{0, 1\} \cup \{\om^\d + \xi: \xi \in \S(\mathtt{s}) \wedge \xi > 0\}$,
	\item if $\mathtt{t} = \mathtt{t}_0 + \ldots + \mathtt{t}_n$ with $n > 0$ written in normal form, let $\S(\mathtt{t}) = \{0,1\} \cup \S(\mathtt{t}_0) \cup \ldots \cup \S(\mathtt{t}_n)$.
\end{itemize}
\end{definition}

Notice that for each term $\mathtt{t}$ the corresponding $\S(\mathtt{t})$ is a finite set of ordinals $\le \om^\a$ and only depends on the syntactic form of $\mathtt{t}$, so it is computable.

\begin{lemma}[$\RCA + \RT^{! \om^\a}_4$]\label{lemma: Stau}
Let $\mathtt{t} \in \varphi_\a (\X)$.
Then for each tuple of terms $\mathtt{A}$ with $\mathtt{A}(0) = \mathtt{t}$, if $\z_\mathtt{A}$ exists then $\z_\mathtt{A} \in \S(\mathtt{t})$.
\end{lemma}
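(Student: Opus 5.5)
The plan is to prove the lemma by induction on the syntactic complexity of $\mathtt{t}$, running the argument inside $\RCA + \RT^{!\om^\a}_4$. Terms are finite objects and $\S$ is computable, so the statement is arithmetical and arithmetical induction (available by Corollary \ref{induction}) suffices. The basic tool is the observation already recorded in the paper: if $\z_\mathtt{A}=\z$ exists, then the first entry satisfies $p_{\z+1}(\mathtt{A})=0$, while for every $\nu<\z$ we have $p_\nu(\mathtt{A})>p_\nu(\mathtt{A}^-)$, so $p_{\nu+1}(\mathtt{A}) = \overline{p_\nu(\mathtt{A}),p_\nu(\mathtt{A}^-)}$ is nonzero. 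Thus $\z_\mathtt{A}$ is determined by the trajectory $\rho\mapsto p_\rho(\mathtt{A})$. By Lemma \ref{basic peeling}(1), the values along this trajectory are nested subterms of $\mathtt{t}$. The task is therefore to show that the only ordinals at which a first-coordinate trajectory starting at $\mathtt{t}$ can first fail to decrease lie in $\S(\mathtt{t})$.

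The first step is to determine at which stages $\rho$ the value $p_\rho(\mathtt{A})$ can actually change. At successor steps the change is given by the overline function. At an indecomposable stage $\om^\d$, the change is the peeling of one $\varphi_\d$. At an additively decomposable stage $\rho+\om^\d$, $\pbar$ is the composite $\pbar_{\om^\d}\circ\pbar_\rho$. I will show that the first time the trajectory can reach a position where $\z$ is attained is one of the following:
\begin{itemize}
\item $0$ or $1$, which are always allowed;
\item $\om^\d$, when the top symbol $\varphi_\d(0)$ is peeled to $0$ (or $\om^\a$ for a constant $\varphi_\a(x)$, whose peeling lands in $\X$);
\item $\om^\d+\xi$, where $\xi$ is the corresponding stage for the argument $\mathtt{s}$ after the outer $\varphi_\d$ is removed at stage $\om^\d$.
\end{itemize}
For this I will use that $\pbar$ commutes with truncation ($\pbar_\d(\mathtt{A}^-)=(\pbar_\d(\mathtt{A}))^-$), and that after stage $\om^\d$ the tuple is $\pbar_{\om^\d}(\mathtt{A})$. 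The shifted tuple therefore satisfies the identity $\pbar_{\om^\d+\xi}(\mathtt{A})=\pbar_\xi(\pbar_{\om^\d}(\mathtt{A}))$, valid when $\om^\d\ggeq\xi$, and this lets the induction hypothesis be applied to $\mathtt{s}$.

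The cases will be organized following the definition of $\S$.
\begin{itemize}
\item For $\mathtt{t}=0$ we have $\z_\mathtt{A}=0$.
\item For $\mathtt{t}=\varphi_\a(x)$, the trajectory is either cut at step $1$ by the overline (giving $0$ or $1$), or it stays equal to $\varphi_\a(x)$ through every stage below $\om^\a$, since by Lemma \ref{basic peeling}(2) only $\varphi_\b$ with $\b\ge\d$ survive to stage $\om^\d$. The next change is then at $\om^\a$.
\item For $\mathtt{t}=\varphi_\d(\mathtt{s})$, after the step-$1$ overline (which again yields $0$, $1$, or a term whose leading symbol is $\varphi_\d$ unchanged), the entry is constant until stage $\om^\d$, where it becomes $\mathtt{s}$. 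From there the induction hypothesis applied to $\pbar_{\om^\d}(\mathtt{A})$, whose first entry is $\mathtt{s}$, gives $\z=\om^\d+\xi$ with $\xi\in\S(\mathtt{s})$ and $\xi>0$. The value $\xi=0$ is excluded because at stage $\om^\d$ the comparison was still strict, and the degenerate case $\mathtt{s}=0$ gives $\om^\d$.
\item For a sum $\mathtt{t}_0+\dots+\mathtt{t}_n$, the overline at step $1$ selects a single summand $\mathtt{t}_i$, or its argument when $\mathtt{t}_i=\varphi_0(\mathtt{t}')$. From then on the trajectory is that of a term whose $\S$-set is contained in $\S(\mathtt{t}_i)$, so we conclude by induction.
\end{itemize}

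I expect the main obstacle to be the rigorous bookkeeping showing that the entry does not change at intermediate limit stages $\om^\gamma$ with $\gamma<\d$, and that the next coordinate behaves compatibly so that the comparison $p_\rho(\mathtt{A})>p_\rho(\mathtt{A}^-)$ persists. This is exactly what makes $\z$ land at $\om^\d+\xi$ rather than at some ordinal in between. I would first try to extract this from Lemma \ref{basic peeling}(2) together with the normal-form constraint that $\mathtt{s}\ne\varphi_{\d'}(\cdot)$ for $\d'>\d$. If that fails, I would follow \cite[Lemma 6.5]{marcone2025} closely and adapt its ordinal argument to terms, as was done for Lemma \ref{basic peeling}.
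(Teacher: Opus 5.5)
Your overall plan (structural induction on $\mathtt{t}$ along $\Sub$, following the clauses of $\S$ and applying the induction hypothesis to a shifted tuple $\pbar_{\om^\d}(\mathtt{A})$ or $\pbar_1(\mathtt{A})$) is the paper's. The paper runs it as $\Pi^0_1$-induction along an enumeration compatible with $\Sub$ and defers the case analysis to \cite[Lemma 6.8]{marcone2025}. The gap is in transferring $\z$ from the shifted tuple back to $\mathtt{A}$.

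You only have $\pbar_{\om^\d+\xi}=\pbar_\xi\circ\pbar_{\om^\d}$ for $\om^\d\ggeq\xi$, i.e.\ for $\xi<\om^{\d+1}$. But $\S(\mathtt{s})$ may contain ordinals $\ge\om^{\d+1}$, and such values of $\z$ really occur. Take $0<\d<\d'<\a$, $\mathtt{t}=\varphi_\d(\varphi_{\d'}(0)+\varphi_0(0))$ and $\mathtt{A}=(\mathtt{t},0)$. The first entry is $\mathtt{t}$ below $\om^\d$, becomes $\varphi_{\d'}(0)+\varphi_0(0)$ at $\om^\d$ and $\varphi_{\d'}(0)$ at $\om^\d+1$, and reaches $0$ only at $\om^{\d'}$. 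So $\z_\mathtt{A}=\om^{\d'}=\om^\d+\om^{\d'}$. Deriving this from $\z_{\pbar_{\om^\d}(\mathtt{A})}=\om^{\d'}$ requires $\pbar_{\om^{\d'}}(\mathtt{A})=\pbar_{\om^{\d'}}(\pbar_{\om^\d}(\mathtt{A}))$. That is not an instance of the defining clause $\pbar_{\rho+\om^\g}=\pbar_{\om^\g}\circ\pbar_\rho$, which needs $\rho\ggeq\om^\g$.

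What is missing is the general composition law $\pbar_{\rho+\sigma}=\pbar_\sigma\circ\pbar_\rho$ whenever $\rho+\sigma\le\om^\a$, with ordinal addition and absorption. It is proved by arithmetical transfinite induction on $\sigma$ (Corollary \ref{induction}). Its essential case is $\pbar_{\om^\g}\circ\pbar_\nu=\pbar_{\om^\g}$ for $\nu<\om^\g$. This holds because $\pbar_{<\om^\g}$ is the eventual value of a $\Sub$-nested, hence eventually constant, trajectory, and $\nu+\mu$ is cofinal in $\om^\g$ as $\mu\to\om^\g$. Only with this law do you get $\z_\mathtt{A}=\om^\d+\z_{\pbar_{\om^\d}(\mathtt{A})}$ in general. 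The same law with $\rho=1$ is needed in the sum case whenever $\z$ is infinite.

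The sum case has a second, independent problem. If $p_1(\mathtt{A})=\mathtt{t}_i$ is not of the form $\varphi_0(\cdot)$, then $\z_\mathtt{A}=1+\z_\mathtt{B}$ with $\mathtt{B}=\pbar_1(\mathtt{A})$. The bare hypothesis $\z_\mathtt{B}\in\S(\mathtt{t}_i)$ does not suffice: $1\in\S(\mathtt{t}_i)$, but $2$ need not lie in $\S(\mathtt{t})$; for instance $\S(\varphi_1(0)+\varphi_1(0))=\{0,1,\om\}$. You must show separately that $\z_\mathtt{B}\neq 1$. This holds because if $\mathtt{t}_i>\mathtt{B}(1)$ then $p_1(\mathtt{B})=\mathtt{t}_i$, while $p_1(\mathtt{B}^-)\in\Sub(\mathtt{B}(1))$ gives $p_1(\mathtt{B}^-)\le\mathtt{B}(1)<\mathtt{t}_i$. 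Your own case analysis allows $\z=1$ after the first overline step for constants and for $\varphi_\d(\mathtt{s})$, so it would leave $\z_\mathtt{A}=2\notin\S(\mathtt{t})$ open.

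Similarly, excluding $\xi=0$ for $\d>0$ needs more than ``the comparison was still strict''. You must check that $p_{<\om^\d}(\mathtt{A}^-)$, which is $<\varphi_\d(\mathtt{s})$, stays below $\mathtt{s}$ after peeling. This uses the second item of Lemma \ref{basic peeling} and strict monotonicity of $\varphi_\d$ on normal-form arguments. It also uses the normal-form condition on $\mathtt{s}$ to exclude equality when that entry is a constant or some $\varphi_\b(\mathtt{u})$ with $\b>\d$.

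By contrast, the point you single out as the main obstacle is routine. As long as the first entry equals $\mathtt{t}$, $p_\rho(\mathtt{A}^-)\in\Sub(\mathtt{A}(1))$ gives $p_\rho(\mathtt{A}^-)\le\mathtt{A}(1)<\mathtt{t}$.
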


\begin{proof}
As Lemma $\ref{basic peeling}$, the proof is essentially an adaptation to terms of the argument for ordinals in \cite[Lemma 6.8]{marcone2025}.
Here we show that it goes through in $\RCA + \RT^{! \om^\a}_4$.

Fix a recursive enumeration of the terms such that, for each term $\mathtt{t}$, every element of $\Sub(\mathtt{t}) \setminus \{\mathtt{t}\}$ appears earlier in the list.  
We then argue by $\Pi^0_1$-induction along the resulting well-order of order-type $\omega$: indeed, the statement for a given term $\mathtt{t}$ depends only on the inductive assumptions for the terms in $\Sub(\mathtt{t})$.
This induction is available in $\RCA$.
Since we are also using basic properties of the peeling functions proved in Lemma \ref{basic peeling}, the proof can be carried out in $\RCA + \RT^{! \om^\a}_4$.
\end{proof}

We defined the norm $|\d|$ of an ordinal $\d < \G_0$ in Definition \ref{normdefinition}. 
We now extend the norm function to terms.

\begin{definition}\label{doublenorm}
For each term $\mathtt{t} \in \varphi_\a(\X)$ written in normal form, let $|\mathtt{t}|$ be $1$ plus the maximum of:
\begin{itemize}
    \item the cardinality of the multiset $\Sub (\mathtt{t})$,
	\item the $|\cdot|$ norm of all the ordinals in $\Sub (\mathtt{t})$,
	\item the $|\cdot|$ norm of all the ordinals in $\S(\mathtt{t})$.
\end{itemize}
\end{definition}

Notice that $|\mathtt{t}|$ is computable since the sets $\Sub (\mathtt{t})$ and $\S (\mathtt{t})$ are finite and computable and the $|\cdot|$ norm of an ordinal is computable.

In the coloring $c$ of Definition \ref{def: coloring on ord}, what matters is the behavior of the peeling function on the first entry of the tuple.
We show that we can foresee when $p_{ < \om^\d} (\mathtt{A})$ stabilizes and that it depends only on $\d$ and on $|\mathtt{A}(0)|$.

\begin{lemma}[$\RCA + \RT^{! \om^\a}_4$]\label{convergence lemma}
Let $0 < \d \le \a$ and let $\mathtt{A} = (\mathtt{t}_0, \ldots, \mathtt{t}_m)$ be a sequence of terms.
Then, 
$$p_{< \om^\d} (\mathtt{A}) = p_{\om^{\d[|\mathtt{t}_0|]} \cdot |\mathtt{t}_0|}(\mathtt{A}).$$ 
\end{lemma}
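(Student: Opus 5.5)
The plan is to show that the sequence $\rho \mapsto p_\rho(\mathtt{A})$ for $\rho < \om^\d$ already reaches its limit by the stage $\rho^* = \om^{\d[n]}\cdot n$, where $n = |\mathtt{t}_0|$. By the remark after Definition \ref{peeling def}, $p_\rho(\mathtt{A})$ depends only on the peeling behaviour along the first coordinate, together with the later coordinates it is compared against. By Lemma \ref{basic peeling}\ref{31}, the map $\rho \mapsto p_\rho(\mathtt{A})$ is non-increasing in the subterm relation, so it stays inside $\Sub(\mathtt{t}_0)$ (and $0$). Hence it can change value at most $|\Sub(\mathtt{t}_0)| < n$ times. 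The task is therefore to show that every change happens at some $\rho < \rho^*$, and that nothing changes on $[\rho^*,\om^\d)$.

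First I will determine which ordinals $\rho < \om^\d$ can produce a genuine change $p_{\rho+1}(\mathtt{A}) \neq p_\rho(\mathtt{A})$, or a change at a limit stage. Unwinding the definition, $\pbar_{\rho}$ for $\rho = \om^{\g_0}+\cdots+\om^{\g_k}$ in normal form is the composition $\pbar_{\om^{\g_k}}\circ\cdots\circ\pbar_{\om^{\g_0}}$. So a change in the first entry at an indecomposable step $\om^{\g}$ amounts to peeling off a $\varphi_\g$, or, for $\g=0$, taking an overline. I expect these change points to be governed by the ordinals in $\S(\mathtt{t}_0)$, by the same bookkeeping as in Lemma \ref{lemma: Stau}. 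The first entry only ever strips $\varphi_\g$'s with $\g<\d$ occurring in $\Sub(\mathtt{t}_0)$, and each strip or overline is a single step. Consequently, between consecutive changes the relevant exponents $\g$ are ordinals in $\Sub(\mathtt{t}_0)$ below $\d$, and there are at most $|\Sub(\mathtt{t}_0)|$ blocks.

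Next I will use the norm. Every $\g<\d$ appearing as an index in $\Sub(\mathtt{t}_0)$ has $|\g| < n$. Proposition \ref{prop: goodnorm} then gives $\g \le \d[n]$, hence $\om^{\g} \le \om^{\d[n]}$. Each stretch of the peeling process between two changes is therefore bounded by an ordinal of the form $\om^{\d[n]}$, and there are fewer than $n$ such stretches. All changes thus occur before $\om^{\d[n]}\cdot n$. This is exactly $\om^\d[n]$ by clause (2) of Definition \ref{system of fund seq}, which explains the shape of the statement.

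The main obstacle is making the second step precise. I need to show that after the first entry stabilizes as $\varphi_\b(\mathtt{s})$ with $\b \ge \d$, or becomes a constant (Lemma \ref{basic peeling}\ref{32}), no further $\pbar_{\om^\g}$ with $\g<\d$ alters it. I also need to check that interleaving overline steps, which compare against the second entry, cannot delay stabilization beyond one $\om^{\d[n]}$-block per subterm. I would prove a sharper auxiliary claim by arithmetical induction along $\om^\d$, which is available by Corollary \ref{induction}. The claim is that if $p_\rho(\mathtt{A})$ has the form $\varphi_\b(\mathtt{s})$ with every index of $\Sub$ below $\d$ having norm $<n$, then $p_{\rho+\om^{\d[n]}}(\mathtt{A})$ either equals $p_{\rho'}(\mathtt{A})$ for all $\rho'\in[\rho,\om^\d)$ or is a proper subterm. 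Iterating this fewer than $n$ times yields the lemma.
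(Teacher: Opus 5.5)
Your proposal is correct and follows essentially the paper's route. The paper defers to \cite[Lemma 6.11]{marcone2025} and lists exactly your ingredients: the bound $|\Sub(\mathtt{t}_0)|<n$ with $n=|\mathtt{t}_0|$ on the number of changes, and the bound $|\g|<n$ on the $\varphi$-indices in $\Sub(\mathtt{t}_0)$, which with Proposition~\ref{prop: goodnorm} and monotonicity of fundamental sequences gives $\g\le\d[n]$ for such $\g<\d$. It also uses Lemma~\ref{basic peeling} and the arithmetical induction of Corollary~\ref{induction}. The step you flag closes as expected: a first entry surviving a whole block of length $\om^{\d[n]}$ is, by the second clause of Lemma~\ref{basic peeling} and that norm bound, either $0$ or else a constant or some $\varphi_\b(\mathtt{s})$ with $\b\ge\d$ that exceeded the second entry at the block's first overline step. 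Since later second entries are subterms of that one, nothing below $\om^\d$ alters it again.
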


\begin{proof}
As before, the proof is the version for terms of \cite[Lemma 6.11]{marcone2025}.
The argument needs the first two properties of the norm of terms of Definition \ref{doublenorm}.
It also uses Lemma \ref{basic peeling}, arithmetical induction over $\om^\a$ and Proposition \ref{prop: goodnorm}.
These are all available in $\RCA + \RT^{! \om^\a}_4$ by Remark \ref{technical lemmas}.
\end{proof}

Let $\sigma \colon \N \to \varphi_\a(\X)$ be an infinite descending sequence.
Our strategy is to use $\sigma$ to translate the coloring $c$ to a coloring of $\om^\a$-size sets of numbers.
First of all, we need to make $\sigma$ more sparse to meet some technical requirements.

\begin{definition}\label{infinite set M}
Let $M$ be the infinite computable set defined as follows: $M(0) = 0$ and for each $i > 0$, $M(i) = |\sigma(M(i-1))| + 3$.
\end{definition}

Define
$$\tau \colon M^- \to \varphi_\a (\X) \quad \text{ by } \quad \tau(M(i)) = \sigma(M(i-1)).$$
Notice that $\tau$ is an infinite subsequence of $\sigma$ computable from $\sigma$.
Moreover, by definition of $M$, 
$$|\tau(M(i))| + 2 < M(i).$$		
		
Now we have all the required machinery.
We define a coloring on $[M^-]^{! \ge \om^\a}$ (which denotes the set of $\om^\a$-large subsets of $M^-$)
\begin{align*}
    \overline{c} \colon & \,\, [M^-]^{! \ge \om^\a} \to 4	\\
    & \,\, u \mapsto c(\tau(u))
\end{align*}
where $c$ is the coloring from Definition \ref{def: coloring on ord} and where by $\tau(u_0, \ldots, u_m)$ we mean $(\tau(u_0), \ldots, \tau(u_m))$.
Notice that $\overline{c}$ is computable from $c$, $M$ and $\tau$.
Similarly, we define $\pbar_\d$ and $\z$ on $u \subset M$ by  $\pbar_\d (u) = \pbar_\d(\tau(u))$ and $\z_u = \z_{\tau(u)}$.
		
We now show that in the definition of $\overline{c}$, only the $\om^\a$-size prefix matters, i.e.\ if $s \subseteq M^-$ and $u \sqsubseteq s$ is the $\om^\a$-size initial segment of $s$, then $\overline{c}(s) = \overline{c}(u)$.
		
\begin{lemma}[$\RCA + \RT^{! \om^\a}_4$]\label{lem: prefix coloring}
For each $\nu \le \om^\a$ and sets $u \sqsubseteq s \subset M^-$, if $u$ is $(1 + \nu)$-large then
$$p_\nu (u) = p_\nu (s).$$
\end{lemma}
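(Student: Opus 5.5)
We prove the statement by induction on $\nu \le \om^\a$, using arithmetical transfinite induction along $\om^\a$, which is available by Corollary \ref{induction}. The statement is arithmetical in $u$, $s$ and $\nu$, since the peeling functions are computable. It is convenient to prove a stronger claim: if $u \sqsubseteq s$ and $u$ is $(1+\nu)$-large, then for every $j$ such that $u^{-j}$ is still $(1+\nu)$-large we have $\pbar_\nu(u)(j) = \pbar_\nu(s)(j)$. The stated lemma is the case $j=0$. The stronger form is needed because $\pbar$ at stage $\nu$ is defined from $\pbar_\rho$ for $\rho<\nu$ applied to the whole tuple, and the entries that matter at stage $\nu$ depend on later entries at earlier stages. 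We also use the observation made after Lemma \ref{basic peeling} that $\pbar_\d(\mathtt{A}^{-n}) = (\pbar_\d(\mathtt{A}))^{-n}$, so entry $j$ of a peeled tuple only depends on $u(j), u(j+1),\dots$.

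\textbf{Base and successor-type cases.} For $\nu=0$, $\pbar_0$ is the identity and the claim is trivial. For $\nu = 1$, the set $u$ is $2$-large, so $|u|\ge 2$. Then $p_1(u) = \overline{\tau(u_0),\tau(u_1)} = p_1(s)$, because $u_1 = s_1$. In general, entry $j$ of $\pbar_1$ depends only on entries $j,j+1$, and $u^{-j}$ being $2$-large guarantees that entry $j+1$ of $u$ exists and agrees with that of $s$. For $\nu = \rho + \om^\d$ with $\rho \ggeq \om^\d$, we have $\pbar_\nu = \pbar_{\om^\d}\circ\pbar_\rho$. If $u$ is $(1+\nu)$-large, the fundamental sequences give $(1+\nu)[u_0]\cdots = 1+\rho$ after some prefix $u'$ of $u$. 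More precisely, by clause (1) of Definition \ref{system of fund seq} we first reduce $\om^\d$ to $0$, so suitable tails of $u$ are $(1+\rho)$-large. Applying the induction hypothesis for $\rho$ to the first entries, and then for $\om^\d$ to the tuple $\pbar_\rho(u)$ (viewing $\pbar_\rho$ as a preprocessing step that preserves the prefix/extension relation on the relevant coordinates), we obtain the claim for $\nu$.

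\textbf{Indecomposable case (main obstacle).} Let $\nu=\om^\d$ with $\d>0$. By Lemma \ref{convergence lemma}, $p_{<\om^\d}(\mathtt{A}) = p_{\rho_0}(\mathtt{A})$ with $\rho_0 = \om^{\d[|\mathtt{t}_0|]}\cdot|\mathtt{t}_0|$, where $\mathtt{t}_0 = \tau(u_0)$. By Definition \ref{infinite set M}, $|\tau(u_0)|+2< u_0$, so $\rho_0 \le \om^\d[u_0]$ via Proposition \ref{prop: goodnorm} and monotonicity. Since $u$ is $(1+\om^\d)$-large, the tail $u^-$ is $\om^\d[u_0]$-large, and hence, using the norm bound $\min u^- \ge u_0 > |\rho_0|$, it is $\rho_0$-large, again by Proposition \ref{prop: goodnorm}. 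One must check carefully that this gives $(1+\rho_0)$-largeness of $u$ itself, with a spare element, so that the induction hypothesis applies at $\rho_0<\nu$. This yields $p_{\rho_0}(u) = p_{\rho_0}(s)$. Peeling one $\varphi_\d$ is then a syntactic operation on this entry, which gives $p_{\om^\d}(u)=p_{\om^\d}(s)$. For later entries $j$, the same argument applies to $u^{-j}$ using $|\tau(u_j)|+2<u_j$.

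The delicate points are the bookkeeping between $1+\nu$ and $\nu$, handled by the extra element guaranteed by the ``$+3$'' and ``$+2$'' margins in $M$, and the justification that stabilization of $p_{<\om^\d}$ at $\rho_0$ happens for both $u$ and $s$ at the same stage. The latter holds because $\rho_0$ depends only on $\mathtt{t}_0$, which $u$ and $s$ share.
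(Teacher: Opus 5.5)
Your skeleton matches what the paper has in mind when it defers to \cite[Lemma 6.13]{marcone2025}: arithmetical transfinite induction on $\nu$, composition for $\nu=\rho+\om^\d$, and Lemma~\ref{convergence lemma} for $\nu=\om^\d$. As you set it up, however, the induction does not close.

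Your ``stronger claim'' is not actually stronger. Since $\pbar_\d(\mathtt{A}^{-j})=(\pbar_\d(\mathtt{A}))^{-j}$, it is just the lemma for $u^{-j}\sqsubseteq s^{-j}$, so your induction hypothesis still only concerns tuples $\tau(v)$ with $v\sqsubseteq w\subset M^-$. In the decomposable case you apply the hypothesis for $\om^\d$ to $\pbar_\rho(\tau(u))$ and $\pbar_\rho(\tau(s))$. These are not $\tau$-images, and they are not in a prefix relation either. Write $u=u'\conc u''$ with $u'$ the $\om^\d$-size prefix; the hypothesis for $\rho$ on the tails $u^{-j}$, $j\le|u'|$, only gives agreement in entries $0,\dots,|u'|$. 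What you need is that $p_{\om^\d}(\mathtt{B})$ depends only on those entries, for a general tuple $\mathtt{B}$. Your $\om^\d$ argument cannot supply this, because it rests on the sparseness $|\tau(u_0)|+2<u_0$ from Definition~\ref{infinite set M}. For $\mathtt{B}=\pbar_\rho(\tau(u))$ that becomes a bound on $|p_\rho(\tau(u_0))|$, which you never establish; the ``preprocessing'' parenthetical is exactly this missing step.

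To repair it, prove the lemma for arbitrary tuples $\mathtt{A},\mathtt{B}$ and a set $u$ with $\mathtt{A}(i)=\mathtt{B}(i)$ and $|\mathtt{A}(i)|<u_i$ for $i<|u|$, with whatever margin you need. Then check that peeling preserves this condition. By Lemma~\ref{basic peeling}, the entries of $\pbar_\rho(\mathtt{A})$ are subterms of those of $\mathtt{A}$, so you need the norm not to increase on subterms, at least in the components of $|\cdot|$ that Lemma~\ref{convergence lemma} uses. The decomposable case then follows from the hypothesis for $\rho$ on the tails, followed by the hypothesis for $\om^\d$ with index set $u'\conc\langle\min u''\rangle$.

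A second problem is in the indecomposable case. You assert $u_0>|\rho_0|$ for $\rho_0=\om^{\d[m]}\cdot m$, $m=|\tau(u_0)|$, and feed it into Proposition~\ref{prop: goodnorm}, but nothing justifies it. The choice of $M$ bounds only $m$, while $|\om^{\d[m]}\cdot m|$ also depends on $\d$. For instance, for $\d=\xi+\om$ one gets $\rho_0=\om^{\xi+m}\cdot m$, whose norm need not be bounded in terms of $m$. Since the lemma allows every $\nu\le\om^\a$ regardless of $\min u$, Proposition~\ref{prop: goodnorm} is not available here. You have two options:
\begin{itemize}
\item add a hypothesis bounding $|\nu|$ by $\min u$, and check that it propagates to every ordinal at which the induction hypothesis is invoked (this matches how the lemma is used in the remark following it); or
\item show directly that the descent of $\om^\d[u_0]$ along $u^-$ cannot skip $\rho_0$.
\end{itemize}
When $\om^\d$ is not an $\varepsilon$-number, the second option is immediate from nestedness. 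In that case $\rho_0=\om^\d[m]\le\om^\d[u_0]$. For $\om^\d[m]<\g<\om^\d$, nestedness and monotonicity give $\om^\d[m]\le\g[m]\le\g[x]$ for all $x\ge m$, and $m\ge2$.

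The two points you did flag are fine: the spare element needed for $1+\rho_0$, and the fact that $u$ and $s$ stabilize at the same stage.
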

		
\begin{proof}
The proof given in \cite[Lemma 6.13]{marcone2025} already works for terms instead of ordinals.
It uses Lemma \ref{convergence lemma} and arithmetical induction over $\om^\a$ and so by Corollary \ref{induction} can be carried out in $\RCA + \RT^{! \om^\a}_4$.
\end{proof}

When $\nu = \om^\a$ the above Lemma shows that the coloring $\overline{c}$ defined above can actually be regarded as a coloring of the $\om^\a$-size subsets of $M$ (since $1+\om^\a = \om^\a$).
This is because in the coloring $\overline{c}$ of a set $u$ we only care about the peeling functions of index $\om^\a$ or index $\nu \in \S(\tau(\min u))$.
By definition of the set $M$ (which contains $u$) each element of $u$ is larger than $|\tau(\min u)|$ and by Definition \ref{doublenorm} of the norm on terms, it is also larger than $|\nu|$ for each $\nu \in \S(\tau(\min u))$.
This means that if $u$ is $\om^\a$-large, by Proposition \ref{prop: goodnorm} (which is provable in $\RCA + \RT^{! \om^\a}_4$ by Remark \ref{technical lemmas}) since $\om^\a \ge 1+\nu$, $u$ must also be $(1+\nu)$-large.
Therefore Lemma \ref{lem: prefix coloring} yields that also $p_\nu$ only depends on the $(1+\nu)$-size initial segment of its input.

We are now ready to give the second proof of the lower bound of the Main Theorem \ref{thm:main}.
The proof can be carried out in $\RCA + \RT^{! \om^\a}_4$.


\begin{theorem}\label{th:homog sets}
Every infinite homogeneous set $H\subseteq M^-$ for $\overline{c}$ has color $0$ and computes an infinite descending sequence in $\X$.
\end{theorem}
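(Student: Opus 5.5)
We first show that colors $1$, $2$, $3$ are impossible for an infinite homogeneous $H \subseteq M^-$, and then extract a descending sequence in $\X$ from a color-$0$ homogeneous set. Throughout we use Lemma \ref{lem: prefix coloring}: $\overline{c}(u)$ depends only on the $\om^\a$-size prefix of $u$, and $p_\nu(u)$ depends only on the $(1+\nu)$-size prefix for every $\nu \in \S(\tau(\min u))$. This lets us freely shift windows along $H$.

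\emph{Colors $1$ and $2$.} Fix $h_0 = \min H$. Every $\z_u$ with $\min u = h_0$ lies in the finite set $\S(\tau(h_0))$ by Lemma \ref{lemma: Stau}, whose size is bounded by $|\tau(h_0)| < h_0$ (Definition \ref{doublenorm}).

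For color $1$, take the $\om^\a$-size sets $u_j = H^{-j}\restricted(\cdot)$ starting at the $j$-th element of $H$, for $j \le h_0$. Homogeneity yields a strictly decreasing chain $\z_{u_0} > \z_{u_1} > \cdots$. Here we use that the $\om^\a$-size set starting at $h_j$, minus its first element, has the same relevant prefix as the set starting at $h_{j+1}$. The chain has length exceeding $|\S(\cdot)|$, so we obtain a contradiction; the chain should be run along entries of sets that share the first element's $\S$ bound, and we will control this via the norm.

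For color $2$, the common value $\z = \z_u$ satisfies $p_{\z+1}(u) = 0$. The plan is to follow \cite[Theorem 6.14]{marcone2025}. Write $\z + 1 = \z' + 1$ as in that paper. Equality of $\z_u$ and $\z_{u^-}$ means $p_\z(u^-) \le p_\z(u^{--})$. Combined with $p_\z(u) \le p_\z(u^-)$ and with strict descent of $\tau$ at stage $\z$ (minimality of $\z$), this forces the entries of $\pbar_{\z}$ to behave so that $p_{\z+1}(u)$ is nonzero, because the overline of a strict descent is nonzero. This contradicts $p_{\z+1}(u)=0$.

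\emph{Color $3$.} This is the main obstacle. Along $H$, $\z$ strictly increases when we pass from $u^-$ to $u$, i.e.\ it decreases going rightwards, or $\z_{u^-}$ fails to exist. If $\z_{u^-}$ never exists, then $c(u^-)=0$, which contradicts homogeneity once we note that $u^-$ extended within $H$ is again colored by $H$. Otherwise $\z_{u_j}$ decreases strictly as $j$ decreases; reading leftwards from a late element gives an increasing sequence with values in $\S(\tau(h_j))$. I would argue as in \cite{marcone2025}: take $u$ with $\z_u = \z$ minimal-type behaviour and use the fact that $\z_{u}<\z_{u^-}$ implies $p_{\z_u+1}(u)=0$ while the peeling at level $\z_{u^-}>\z_u$ of $u$ coincides with $p$ of a later window. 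This yields $p_{\z_{u^-}}(u)=0$ for a nonzero term, a contradiction. I expect this case analysis, and transferring it from ordinals to terms while checking that the largeness prefixes agree, to require the most care.

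\emph{Color $0$.} Every $\om^\a$-size $u \subseteq H$ has $p_{\om^\a}(u) >_\X p_{\om^\a}(u^-)$, and both values are in $\X$ by Lemma \ref{basic peeling}. Let $u_j$ be the $\om^\a$-size initial segment of $H$ with its first $j$ elements removed, and define $f(j) = p_{\om^\a}(u_j)$. By Lemma \ref{lem: prefix coloring}, $p_{\om^\a}(u_j^-) = p_{\om^\a}(u_{j+1})$, since $u_j^-$ and $u_{j+1}$ share their relevant $\om^\a$-size prefix. Hence $f(0) >_\X f(1) >_\X \cdots$, and $f$ is computable from $H$, $\tau$ and $M$. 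All of this is carried out in $\RCA + \RT^{!\om^\a}_4$, using the remarks on induction (Corollary \ref{induction}).
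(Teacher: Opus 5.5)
Your color-$0$ case is correct and is essentially the paper's Case 1. The treatment of colors $1$, $2$, $3$ does not go through. In effect you have swapped the two tools the argument needs: well-foundedness handles color $1$, and finiteness of a multiset of subterms handles colors $2$ and $3$.

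\textbf{Colors $2$ and $3$.} You look for a contradiction inside a single tuple, but none exists.

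In color $2$, minimality of $\z_u$ gives strict descent only at stages $\nu<\z_u$. At stage $\z_u$ itself one has $p_{\z_u}(u)\le p_{\z_u}(u^-)$ by definition, so the overline there is $0$, which agrees with $p_{\z_u+1}(u)=0$. In color $3$, the fact $p_{\z_{u^-}}(u)=0$ is true but contradicts nothing.

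Concretely, let $a=\varphi_0(\varphi_0(0)+\varphi_0(0))$ and $b=\varphi_0(\varphi_0(0))$. The descending terms $a+a+a>a+a+b>a+b+b>b+b+b>b+b+\varphi_0(0)$ have consecutive overlines $\varphi_0(0)+\varphi_0(0)$ three times and then $\varphi_0(0)$. So a tuple $\mathtt{A}$ beginning with the first term has $\z_{\mathtt{A}}=\z_{\mathtt{A}^-}=1$ (color $2$), and a tuple beginning with the second term has color $3$.

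The missing idea is the global counting argument of the paper's Case 3, which adapts cases 3 and 4 of \cite[Theorem 6.1]{marcone2025}. If $\z_{s_0}\le\z_{s_1}\le\cdots$ along the windows $s_i$ of $H$, then the terms $p_{\z_{s_i}}(s_i)$ arise from pairwise distinct occurrences of elements of the finite multiset $\Sub(\tau(H(0)))$. In the example they come from the three summands $a$ of $a+a+a$, at positions $2,1,0$. Hence this cannot continue along the infinite set $H$.

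\textbf{Color $1$.} Bounding the length of $\z_{u_0}>\z_{u_1}>\cdots$ by the size of $\S(\tau(h_0))$ is unjustified. Lemma \ref{lemma: Stau} only places $\z_{u_j}$ in $\S(\tau(h_j))$, which varies with $j$, and finite descending chains below a fixed ordinal can be arbitrarily long. The correct argument is that homogeneity over all windows $s_i$ gives an infinite strictly descending sequence $\z_{s_0}>\z_{s_1}>\cdots$ in $\om^\a+1$. This contradicts $\WO(\om^\a)$ from Lemma \ref{wellorder a}, which your proposal never uses.
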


\begin{proof}
Let $H\subseteq M^-$ be an infinite homogeneous set for $\overline{c}$.
For each $i\in\N$ let $s_i$ be the $\om^\a$-size initial segment of the infinite set $H \setminus \{0, \ldots, H(i)-1\}$ (so that $\min s_i = H(i)$).
By homogeneity, we have that $\overline{c} (s_i)$ has the same color for all $i \in \N$.
We distinguish four cases based on the color of the homogeneous set $H$.
We start by showing that if $H$ has color 0, then it computes an infinite descending sequence in $\X$.
If $H$ has a color different from 0 then we prove that it cannot be infinite.

\medskip

\textbf{Case 1:} $H$ has color 0.
By Lemma \ref{basic peeling}, for each $i\in\N$, $p_{\om^\a}(s_i) \in \X$.
We define a map
\begin{align*}
    \sigma' \colon & \,\, H \to \X \\
    & \,\, H(i) \mapsto p_{\om^\a}(s_i).  
\end{align*}
By Definition \ref{def: coloring on ord}, for each $i$, we have that $\z_{s_i}$ does not exists and hence $p_{\om^\a} (s_i) >_\X p_{\om^\a}(s_{i+1})$.
Therefore $\sigma'$ is an infinite strictly decreasing sequence in $\X$ computable from $H$ and $\overline{c}$ and so it is computable from $H$ and the strictly decreasing sequence $\sigma$ in $\varphi_\a(\X)$ we fixed before.

\medskip

\textbf{Case 2:} $H$ has color 1.
We have that
$$\z_{s_0} > \z_{s_1} > \cdots > \z_{s_n} > \cdots$$
and all these ordinals belong to $\om^\a + 1$.
This contradicts Lemma \ref{wellorder a} which states that $\om^\a$, and hence $\om^\a+1$, is a well-order.

\medskip

\textbf{Case 3:} $H$ has color 2 or color 3.
In this case either
$$\z_{s_0} = \z_{s_1} = \cdots = \z_{s_n} = \cdots,$$
or
$$\z_{s_0} < \z_{s_1} < \cdots < \z_{s_n} < \cdots$$
holds. 
We claim that the terms $p_{\z_{s_i}} (s_i)$ originate from distinct occurrences of the elements of the multiset $\Sub (\tau(H(0)))$, contradicting the fact that the set $H$ is infinite.
The proof is essentially an adaptation to our setting of the argument for cases 3 and 4 of \cite[Theorem 6.1]{marcone2025}.
\end{proof}

\begin{corollary}[$\RCA$]
$\RT_4^{!\om^\a} \vdash \Pi^0_{\om^\a} \CA$.
\end{corollary}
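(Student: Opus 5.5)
The plan is to argue inside $\RCA + \RT_4^{!\om^\a}$ and reduce to Theorem \ref{wop mm11}: it suffices to derive $\mathsf{WOP}(\X \to \varphi_\a(\X))$, and then that theorem gives $\Pi^0_{\om^\a}\CA$ over $\RCA$. The argument is by external induction on $\a$, with base case $\a = 1$ given by \cite[Theorem 3.6]{carlucci2014}. So fix $\a$ and assume the inductive hypothesis for all $0<\b<\a$. That hypothesis is what Lemma \ref{wellorder a} and Corollary \ref{induction} need. They give $\WO(\om^\a)$ and arithmetical transfinite induction along $\om^\a$ in our theory, and hence, via Remark \ref{technical lemmas}, the nestedness and norm properties that the peeling lemmas depend on.

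I will prove the contrapositive form of the well-ordering principle. Fix a linear order $\X$ and suppose $\sigma\colon \N \to \varphi_\a(\X)$ is an infinite strictly descending sequence; the goal is an infinite descending sequence in $\X$. From $\sigma$ form the sparse set $M$ of Definition \ref{infinite set M}, the subsequence $\tau$, and the coloring $\overline{c}$. Each of these is $\Delta^0_1$ in $\sigma$, so it exists in $\RCA$. By Lemma \ref{lem: prefix coloring} with $\nu=\om^\a$, together with the discussion following it, $\overline{c}(s)$ depends only on the $\om^\a$-size initial segment of $s$. Hence restricting $\overline{c}$ to $[M^-]^{!\om^\a}$ gives a legitimate instance of $\RT_4^{!\om^\a}$ on the infinite set $M^-$. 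Here the general form of the principle for infinite $X\subseteq\N$ is used.

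Applying $\RT_4^{!\om^\a}$ yields an infinite homogeneous $H\subseteq M^-$. By Theorem \ref{th:homog sets}, $H$ has color $0$, and the map $H(i)\mapsto p_{\om^\a}(s_i)$ is an infinite strictly descending sequence in $\X$. This map is computable from $H\oplus\sigma$, so it exists by $\Delta^0_1$ comprehension. This establishes $\mathsf{WOP}(\X\to\varphi_\a(\X))$, and Theorem \ref{wop mm11} then finishes the proof.

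The main obstacle is checking that everything invoked really is available in $\RCA + \RT_4^{!\om^\a}$ and not only in the metatheory. In particular, the case analysis of Theorem \ref{th:homog sets} rules out colors $1$ through $3$ using $\WO(\om^\a+1)$ and finiteness of the multiset $\Sub(\tau(H(0)))$. The limit computations of the peeling functions must also be effective, which Lemma \ref{convergence lemma} guarantees by bounding the stabilization point using the norm. I would verify these points by checking that each of these results cites only Corollary \ref{induction} or $\Pi^0_1$-induction, and that the norm bounds built into $M$ apply uniformly.
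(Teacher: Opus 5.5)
Your proposal is correct and follows the paper's proof essentially step for step. You reduce to $\mathsf{WOP}(\X \to \varphi_\a(\X))$ via Theorem \ref{wop mm11}, inside the external induction on $\a$. From a descending sequence $\sigma$ in $\varphi_\a(\X)$ you build $M$, $\tau$ and $\overline{c}$, apply $\RT^{!\om^\a}_4$ on $M^-$, and extract a descending sequence in $\X$ from the color-$0$ homogeneous set via Theorem \ref{th:homog sets}. Your remarks about which steps rely on Corollary \ref{induction} and Remark \ref{technical lemmas} just make explicit what the paper delegates to the surrounding lemmas.
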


\begin{proof}
Let $\X$ be a linear ordering and let $\sigma$ be an infinite descending sequence in $\varphi_\a (\X)$.
Let $\overline{c}$ be the coloring associated to $\sigma$, $\a$, $\X$ and defined starting from the computable coloring $c$ of Definition \ref{def: coloring on ord}.
By $\RT_4^{!\om^\a}$, such coloring has an infinite homogeneous set $H$ of color 0.
By Theorem \ref{th:homog sets}, $H$ computes an infinite descending sequence in $\X$.
\end{proof}


%
%
%
%

\section{Upper bound}\label{sec:upper}

The upper bound is proved by induction on $\a < \G_0$.
The base case $\RT^{! \om}_k$ can be found in \cite[Theorem 3.7]{carlucci2014}.
The next lemma deals with the other basic case $\RT^{!1 \uplus \om}_k$: we isolate this case to highlight the strategy, which will be analogous for the induction step in the main result.

\begin{lemma}\label{upluslemma}
For each infinite $X \subseteq \N$, $k \in \N$, and coloring $c \colon [X]^{! 1 \uplus \omega} \to k$, $\TJ(c \oplus X,\om)$ computes an infinite homogeneous set for $c$.
\end{lemma}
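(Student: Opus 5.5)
The plan is to follow the proof of \cite[Theorem 3.7]{carlucci2014}, adding one finite-dimensional Ramsey step at the end. Write a $(1\uplus\omega)$-size subset of $X$ as $\langle a\rangle\conc u$, where $u$ is $\omega$-size. Then $|u|=\min u+1$, so if $b=\min u$ then $u=\langle b\rangle\conc v$ with $|v|=b$. The construction has two steps. First, working inside $\TJ(c\oplus X,\omega)$, I build an infinite $H_0=\{h_0<h_1<\cdots\}\subseteq X$ which is \emph{pre-homogeneous} in the following sense: for every $(1\uplus\omega)$-size $s\subseteq H_0$, $c(s)$ depends only on the pair $(\min s, s(1))$. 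Second, I apply $\RT^2_k$ to the induced coloring $c'(a,b)=c(\langle a,b\rangle\conc v)$ of $[H_0]^2$, with $v\subseteq H_0$ any $b$-element set above $b$. Any $H\subseteq H_0$ homogeneous for $c'$ is then homogeneous for $c$, because $\langle a,b\rangle\conc v\subseteq H$ is $(1\uplus\omega)$-size precisely when $|v|=b$.

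For the first step I run a stagewise construction with reservoirs $X=R_0\supseteq R_1\supseteq\cdots$. At stage $i$ the elements $h_0<\cdots<h_{i-1}$ have been fixed, and I put $h_i=\min R_i$. For each $j<i$, the function $d_{j,i}(v)=c(\langle h_j,h_i\rangle\conc v)$ is a $k$-coloring of $[R_i\setminus\{h_i\}]^{h_i}$, so there are finitely many such colorings, all of the same dimension $h_i$. I combine them into one $k^i$-coloring $d_i(v)=(d_{0,i}(v),\dots,d_{i-1,i}(v))$ and let $R_{i+1}$ be an infinite homogeneous set for $d_i$. By Jockusch's theorem \cite{jockusch}, used in its uniform and relativized form, such an $R_{i+1}$ can be computed from the $(h_i+1)$-th jump of $R_i\oplus c$, and the index of the reduction is computable from $h_i$, $i$ and $k$. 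Pre-homogeneity follows. Every $(1\uplus\omega)$-size $s\subseteq H_0$ has the form $\langle h_j,h_i\rangle\conc v$ with $v\subseteq\{h_{i+1},h_{i+2},\dots\}\subseteq R_{i+1}$, hence $c(s)$ is the $j$-th coordinate of the constant colour of $d_i$ on $R_{i+1}$.

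The point needing care is that the whole sequence $(R_i)_i$, and with it $H_0$, is computable from $\TJ(c\oplus X,\omega)$, although the jump levels $h_i$ used at successive stages are unbounded. I would prove by induction on $i$ that $R_i$ is computable from the $m_i$-th jump of $c\oplus X$, with $m_{i+1}=m_i+h_i+1$ and with a reduction index computable from $i$ and $h_0,\dots,h_{i-1}$. Here I use that $\TJ(R\oplus c,n)$ is uniformly computable from $\TJ(c\oplus X,m+n)$ whenever $R$ is computable from $\TJ(c\oplus X,m)$. Since $\TJ(c\oplus X,\omega)$ contains every finite level $\TJ(c\oplus X,m)$ uniformly in $m$, the whole construction is a single computation relative to $\TJ(c\oplus X,\omega)$. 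I expect this uniformity bookkeeping to be the main obstacle: one has to make sure that the Jockusch reductions relativize uniformly and that the jump levels add up in the way claimed.

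For the second step, $c'$ is computable from $c\oplus H_0$, since to evaluate $c'(a,b)$ one may take $v$ to be the first $b$ elements of $H_0$ above $b$. Jockusch's bound for $\RT^2_k$ gives an infinite $c'$-homogeneous set computable from $(c\oplus H_0)''$, and this is computable from $\TJ(c\oplus X,\omega)$ because $H_0$ is. By the reduction in the first paragraph this set is homogeneous for $c$, which proves Lemma~\ref{upluslemma}.
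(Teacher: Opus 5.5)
Your proof handles the wrong family of sets, because you have read $1\uplus\omega$ in the wrong order. By definition, $s$ is $(\alpha\uplus\beta)$-size if $s=s_\beta\conc s_\alpha$ with $s_\beta<s_\alpha$, $s_\beta$ $\beta$-size and $s_\alpha$ $\alpha$-size. For $\alpha=1$, $\beta=\omega$ this means an $\omega$-size set \emph{followed by} one extra element, i.e.\ $s=\langle a\rangle\conc t$ with $|t|=a+1$, so $|s|=\min s+2$.

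The paper's proof uses it this way: it homogenizes $t\mapsto c(\langle h_i\rangle\conc t)$ on $[H_{i-1}\setminus\{h_i\}]^{h_i+1}$. This is also why $\uplus$ is needed at all. As ordinals $1+\omega=\omega$, so ``$\omega$-size, then one more element'' is not a $\gamma$-size notion. It is, however, exactly the shape used in Case~3 of Theorem~\ref{upper general case}.

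The family you treat is one element followed by an $\omega$-size set, so $|s|=s(1)+2$. That is the family of $(\omega+1)$-size sets, since $(\omega+1)[n]=\omega$. On the actual domain of $c$, your colorings $d_{j,i}(v)=c(\langle h_j,h_i\rangle\conc v)$ with $|v|=h_i$ are undefined, because $\langle h_j,h_i\rangle\conc v$ has $h_i+2\neq h_j+2$ elements. Likewise, your closing claim that $\langle a,b\rangle\conc v$ is $(1\uplus\omega)$-size iff $|v|=b$ is false; the correct condition is $|v|=a$. As written, you have proved the corresponding upper bound for $\RT^{!(\omega+1)}_k$, not Lemma~\ref{upluslemma}.

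The repair is easy and shortens your argument. Once the tail is homogenized, the colour of $\langle a\rangle\conc t$ depends only on $a$. So at stage $i$ you need only the single coloring $t\mapsto c(\langle h_i\rangle\conc t)$ of $[R_i\setminus\{h_i\}]^{h_i+1}$, homogenized by Jockusch's theorem with about $h_i+1$ further jumps. Your bookkeeping of jump levels and reduction indices then shows that $(h_i)_i$ and $(R_i)_i$ are uniformly computable from $\TJ(c\oplus X,\omega)$. The map sending $h_i$ to the stage-$i$ colour is an instance of $\RT^1_k$, and its computable solution is homogeneous for $c$. No product colorings $d_i$ and no $\RT^2_k$ step are needed; this is exactly the paper's proof. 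Your explicit uniformity discussion is a genuine plus, since the paper only asserts that $\TJ(c\oplus X,\omega)$ uniformly computes the construction.
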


\begin{proof}
We aim to define a strictly increasing sequence $(h_i)_{i \in \N}$ of elements of $X$ and a sequence $(H_i)_{i \in \N}$ of infinite subsets of $X$ with the following properties for each $i$:
\begin{enumerate}
    \item $H_{i+1} \subset H_i$;
    \item $h_i < \min H_i$;
    \item $H_i$ is computable from $\TJ(c \oplus X,m)$ for some $m \in \N$;
    \item for each $s,t \in [H_i]^{h_i+1}$ $c(\langle h_i \rangle \conc s) = c(\langle h_i \rangle \conc t)$.
\end{enumerate}

Initialize the construction by leaving $h_{-1}$ undefined and setting $H_{-1} = X$.
Suppose we have defined the sequences up to $h_{i-1}$ and $H_{i-1}$.
Let $h_i = \min H_{i-1}$ and consider the $c \oplus H_{i-1}$ computable coloring
\begin{align*}
    f_{h_i} \colon & [H_{i-1} \setminus \{h_i\}]^{h_i+1} \to k \\
     & t \mapsto c(\langle h_i \rangle \conc t)
\end{align*}
Then by \cite{jockusch} $\TJ(f_{h_i} \oplus H_{i-1},h_i+2)$ computes an infinite homogeneous set $H_i$ for $f_{h_i}$.
By inductive hypothesis of the construction we know that $H_i$ is computable in some finite jump of $c \oplus X$ and so the sequences up to $i$ satisfy the requirements.

Let $Z = \{h_i : i \in \N \}$ and let $f \colon Z \to k$ be defined as $f(h_i) = f_{h_i} (t)$ for some $t \in [H_i]^{h_i+1}$: by definition of $H_i$ the coloring $f$ is well defined and does not depend on the choice of $t \subseteq H_i$.
Moreover $f$ and $Z$ are computable from $\TJ(c \oplus X,\om)$ since this set uniformly computes the sequences $(h_i)_{i \in \N}$ and $(H_i)_{i \in \N}$.
Since $f$ is an instance of $\RT^1_k$ which admits computable solutions, we get that $\TJ(c \oplus X,\om)$ computes an infinite homogeneous set $H$ for $f$.
It is immediate to verify that $H$ is homogeneous for $c$ too.
\end{proof}

\begin{theorem}\label{upper general case}
Let $\a < \G_0$, $c \colon [X]^{! \a} \to k$ be an instance of $\RT^{! \a}_k$ and $d \colon [Y]^{! 1 \uplus \a} \to k$ be an instance of $\RT^{! 1 \uplus \a}_k$.
Then $\TJ(c \oplus X,\a+1)$ computes a solution to $c$ and $\TJ(d \oplus Y,\a+1)$ computes a solution to $d$.
\end{theorem}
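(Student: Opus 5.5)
We argue by (external) transfinite induction on $\a < \G_0$, proving both halves of the statement simultaneously. The base cases are $\a = 0$, where colorings of the empty set are trivial, and $\a$ finite, where $\RT^n_k$ has solutions computable in $\TJ(c\oplus X, n+1)$ by \cite{jockusch}. For $1\uplus n$ we reduce to $\RT^{n+1}_k$, since a $(1\uplus n)$-size set is just an $(n+1)$-element set. The case $\a=\om$ is \cite[Theorem 3.7]{carlucci2014} together with Lemma \ref{upluslemma}. The general scheme is the one used in Lemma \ref{upluslemma}. First we reduce the claim for $c$ to the claim for an instance of the form $1\uplus\cdot$. Then we handle $1\uplus\a$ by a ``pick the head, thin out the tail'' construction run inside $\TJ(\cdot,\a)$, and finish with a single application of $\RT^1_k$ done one jump higher.

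\textbf{Step 1: $\RT^{!\a}$ from $\RT^{!1\uplus\b}$ for smaller $\b$.} There are three cases.
\begin{itemize}
\item If $\a=\b+1$ is a successor, then $\a[n]=\b$ for every $n$, so an $\a$-size set is exactly $\langle x\rangle\conc t$ with $t$ $\b$-size. Thus $c$ is literally an instance of $\RT^{!1\uplus\b}$, and the inductive hypothesis gives a solution in $\TJ(c\oplus X,\b+1)=\TJ(c\oplus X,\a)$.
\item If $\a$ is a limit, an $\a$-size set is $\langle x\rangle\conc t$ with $t$ $\a[x]$-size. Here I would mimic Lemma \ref{upluslemma} directly. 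Set $h_i=\min H_{i-1}$ and color $t\mapsto c(\langle h_i\rangle\conc t)$ on $[H_{i-1}\setminus\{h_i\}]^{!\a[h_i]}$. By induction, since $\a[h_i]<\a$, there is a homogeneous $H_i$ computable in $\TJ(c\oplus X,\a[h_i]+1)$ relative to $H_{i-1}$.
\item If $\a=\rho+\om^\d$ with $\rho\ggeq\om^\d$, then the fundamental sequence acts on the last term. In this case I would still use the same head-peeling: the first element $x$ reduces $\a$ to $\a[x]<\a$, so the limit case argument applies verbatim.
\end{itemize}

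\textbf{Step 2: the jump bookkeeping.} The key point is that the sequence $(h_i,H_i)_i$ is uniformly computable from $\TJ(c\oplus X,\a)$. At stage $i$ the oracle needed is a jump of index $\g_i+1$, where $\g_i$ accumulates the previous stages. So I must check that finitely many iterations of jumps of index $\a[h_j]+1$, for $j\le i$, stay below $\a$. This means showing $\a[h_0]+1+\a[h_1]+1+\dots<\a$ in the appropriate composed sense, namely $\TJ(\TJ(Z,\b),\g)\le_T\TJ(Z,\b+\g)$. When $\a$ is indecomposable this is immediate. In general I would use that all the $\a[h_j]$ share the leading part of $\a$: for $\a=\rho+\om^\d$ the stages live in $\rho+\om^\d[h_j]$, and the relevant sum is still $<\a$ because only the last term grows. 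This gives, uniformly in $i$, an index below $\a$, hence the whole construction is $\TJ(c\oplus X,\a)$-computable. The coloring $f(h_i)$, the common color on $H_i$, is then computable from the same oracle, and by $\RT^1_k$ the set $\{h_i: f(h_i)=j\}$ for a color $j$ occurring infinitely often is the solution. Choosing $j$ costs one more jump at most, which yields $\TJ(c\oplus X,\a+1)$. Homogeneity follows exactly as in Lemma \ref{upluslemma}, using that an $\a$-size subset of $\{h_i\}$ starting at $h_i$ has its tail inside $H_i$.

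\textbf{Step 3: the $1\uplus\a$ case.} This is the same construction one level up. Let $h_i=\min H_{i-1}$ and color the $\a$-size sets of $H_{i-1}\setminus\{h_i\}$ by $d(\langle h_i\rangle\conc t)$. Use the just-proved first half to get $H_i$ within $\TJ(\cdot,\a+1)$ of the previous stage. Then the sequence is computable from $\TJ(d\oplus Y,\a+1)$ when $\a$ is... and here is the main obstacle. Naively $\om$ iterations of $(\a+1)$-jumps cost $\TJ(\cdot,(\a+1)\cdot\om)$, which exceeds $\a+1$. I expect the fix to be to carry out Step 1 and Step 3 in a single interleaved construction. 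That is, we do not solve each $H_i$ with a full $(\a+1)$-jump, but only with the jump of index $\a[\cdot]$-size colorings, exploiting that $\a$-size sets over elements $>h_i$ reduce to $\a[h_{i+1}]$-size sets. This again gives a uniform bound below $\a+1$, plus one final jump for $\RT^1_k$. Verifying this uniform bound, via the norm property of Proposition \ref{prop: goodnorm} and the structure of fundamental sequences, is where I expect most of the work.
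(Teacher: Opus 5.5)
Your head-peeling argument is fine for indecomposable $\alpha$, where it is the paper's Case 1. For decomposable $\alpha=\rho+\omega^\delta$, however, your Step 2 bookkeeping is false.

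Each stage applies the induction hypothesis to a coloring living on $H_{i-1}$, so it costs $\alpha[h_i]+1$ jumps \emph{over $H_{i-1}$}. Composition adds the new index on the right: $\TJ(\TJ(Z,\beta),\gamma)\equiv_T\TJ(Z,\beta+\gamma)$. Since $\omega^\delta[h_0]+1+\rho=\rho$, two stages already only give $H_1\le_T\TJ(c\oplus X,\rho\cdot 2+\omega^\delta[h_1]+1)$, which is beyond $\alpha$. The whole sequence needs at least $\rho\cdot\omega$ jumps; for $\alpha=\omega\cdot 2$, stages of cost $\omega+h_i+1$ compose to $\omega^2$.

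The cause is that the first element of a set only lowers the \emph{last} Cantor normal form term. For $\alpha=\lead(\alpha)+\alpha'$, an $\alpha$-size set is an $\alpha'$-size block followed by a $\lead(\alpha)$-size block, so peeling from the front never touches the expensive part. The missing idea is to reverse the roles:
\begin{itemize}
\item Take as heads the $(1\uplus\alpha')$-size sets $h$, i.e.\ the whole $\alpha'$-block plus the first element $\max h$ of the $\lead(\alpha)$-block.
\item Enumerate them carefully enough that every $(1\uplus\alpha')$-size subset of $Z=\bigcup_i h_i$ occurs as some $h_i$.
\item Thin out tails of type $\lead(\alpha)[\max h_i]<\lead(\alpha)$. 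Since $\lead(\alpha)$ is indecomposable, all stages fit in $\TJ(c\oplus X,\lead(\alpha))$.
\item Solve the induced coloring of $[Z]^{!1\uplus\alpha'}$ by the induction hypothesis for the \emph{second} statement at $\alpha'<\alpha$.
\end{itemize}
This costs $\lead(\alpha)+\alpha'+1=\alpha+1$ jumps in total. Infinite successors are decomposable and are handled the same way.

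Step 3 is left open, and the obstacle you hit comes from misreading $\uplus$. In the paper, a $(1\uplus\alpha)$-size set is an $\alpha$-size block \emph{followed} by one extra element. Compare Lemma~\ref{upluslemma}, where $\langle h_i\rangle\conc t$ with $|t|=h_i+1$ is an $\omega$-size set plus one element. It is not one element followed by an $\alpha$-size block, which is just an $(\alpha+1)$-size set. Two consequences follow:
\begin{itemize}
\item Your claim that for $\alpha=\beta+1$ the coloring $c$ is ``literally an instance of $\RT^{!1\uplus\beta}$'' is false.
\item In Step 3, peeling off $h_i$ leaves an $\alpha$-size tail with no ordinal descent, hence the $(\alpha+1)\cdot\omega$ blow-up.
\end{itemize}
With the correct reading, removing the first element $h$ of the $\alpha$-block leaves a $(1\uplus\alpha[h])$-size tail. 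For indecomposable $\alpha$, the induction hypothesis for the second statement at $\alpha[h]<\alpha$ then applies, and the construction stays inside $\TJ(d\oplus Y,\alpha)$ exactly as for the first statement. For decomposable $\alpha$, one uses $(1\uplus\alpha')$-size heads and $(1\uplus\lead(\alpha)[\max h])$-size tails as above. No interleaving or extra norm argument is needed. The $1\uplus$ statement is carried through the induction precisely to supply the head instances of the decomposable case.
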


\begin{proof}
The proofs are by simultaneous induction.
For both statements we consider separately the cases $\a$ decomposable and $\a$ indecomposable.

Notice that for $\a \le \om$ the statements have already been proved: for each $n \in \N$ the result for $\RT^n_k$ was proved in \cite{jockusch}, while for $\RT^{! \omega}_k$ it was proved in \cite{carlucci2014}.
Their proof is for the case $k=2$ but as we noticed at the end of Subsection \ref{ssec:ramsey theory}, the statements for the same $\a$ and different number of colors are equivalent over $\RCA$. 
For $\RT^{! 1 \uplus \omega}_k$ it was proved in Lemma \ref{upluslemma} above.
Therefore for the rest of the proof we fix $\alpha > \omega$ and suppose that both statements are true for every $\beta < \alpha$.

\medskip

\textbf{Case 1:} Suppose that $\alpha$ is indecomposable and let $c \colon [X]^{! \alpha} \to k$.
We perform the construction of a strictly increasing sequence $(h_i)_{i \in \N}$ of elements of $X$ and a sequence $(H_i)_{i \in \N}$ of infinite subsets of $X$ of Lemma \ref{upluslemma}.
We adapt the requirements to our case:
\begin{enumerate}
    \item $H_{i+1} \subset H_i$;
    \item $h_i < \min H_i$;
    \item $H_i$ is computable from $\TJ(c \oplus X, \b)$ for some $\beta < \alpha$;
    \item for each $s,t \in [H_i]^{! \alpha[h_i]}$ $c(\langle h_i \rangle \conc s) = c(\langle h_i \rangle \conc t)$.
\end{enumerate}

Initialize the construction by leaving $h_{-1}$ undefined and setting $H_{-1} = X$.
Suppose we have defined the sequences up to $h_{i-1}$ and $H_{i-1}$.
Let $h_i = \min H_{i-1}$ and consider the $c \oplus H_{i-1}$ computable coloring
\begin{align*}
    f_{h_i} \colon & [H_{i-1} \setminus \{h_i\}]^{! \alpha[h_i]} \to k \\
     & t \mapsto c(\langle h_i \rangle \conc t)
\end{align*}
Then since $\alpha[h_i] < \alpha$ by inductive hypothesis $\TJ(f_{h_i} \oplus H_{i-1}, \alpha[h_i] + 1)$ computes an infinite homogeneous set $H_i$ for $f_{h_i}$.
By the construction we know that $H_i$ is computable in $\TJ(c \oplus X, \b)$ and since $\alpha$ is indecomposable we also know that $\beta < \alpha$.
Therefore the sequences up to $i$ satisfy the requirements.

Let $Z = \{h_i : i \in \N \}$ and let $f \colon Z \to k$ be defined as $f(h_i) = f_{h_i} (t)$ for some $t \in [H_i]^{!\alpha[h_i]}$: by definition of $H_i$ the coloring $f$ is well defined and does not depend on the choice of $t \in [H_i]^{!\alpha[h_i]}$.
Notice that since $\alpha$ is indecomposable and each stage is computable in $\TJ(c \oplus X, \b)$ for $\beta < \alpha$, $\TJ(c \oplus X, \a)$ uniformly computes the sequences $(h_i)_{i \in \N}$ and $(H_i)_{i \in \N}$.
Since $f$ is an instance of $\RT^1_k$ which admits computable solutions, we get that $\TJ(c \oplus X,\a)$ (and so $\TJ(c \oplus X,\a+1)$) computes an infinite homogeneous set $H$ for $f$.
It is immediate to verify that $H$ is homogeneous for $c$ too.

\medskip

\textbf{Case 2:} Suppose that $\alpha$ is indecomposable and let $d \colon [Y]^{! 1 \uplus \alpha} \to k$.
The proof is completely analogous to the previous case so we only highlight the differences.
We construct a strictly increasing sequence $(h_i)_{i \in \N}$ of elements of $Y$ and a sequence $(H_i)_{i \in \N}$ of infinite subsets of $Y$.
The first 3 properties we require are exactly the same as in the previous case (replacing $X$ with $Y$ and $c$ with $d$), while property 4 is:
\begin{enumerate}[start=4]
    \item for each $s,t \in [H_i]^{! 1 \uplus \alpha[h_i]}$ $d(\langle h_i \rangle \conc s) = d(\langle h_i \rangle \conc t)$.
\end{enumerate}

The steps of the construction are the same, with the difference that the $d \oplus H_{i-1}$ computable coloring we consider is
\begin{align*}
    f_{h_i} \colon & [H_{i-1} \setminus \{h_i\}]^{! 1 \uplus \alpha[h_i]} \to k \\
     & t \mapsto d(\langle h_i \rangle \conc t)
\end{align*}
 
Let $Z = \{h_i : i \in \N \}$ and let $f \colon Z \to k$ be defined as $f(h_i) = f_{h_i} (t)$ for some $t \in [H_i]^{! 1 \uplus \alpha[h_i]}$, which is well defined as before.
Then we produce exactly as in the previous case an infinite homogeneous set $H$ for $f$ computable in $\TJ(d \oplus Y, \a+1)$, which is immediately verified to be homogeneous for $d$ too.

\medskip

\textbf{Case 3:} Suppose that $\alpha$ is decomposable, let $\a'<\a$ be such that $\alpha = \lead(\alpha) + \a'$ and let $c \colon [X]^{! \alpha} \to k$.
Since $\alpha$ is decomposable then $\alpha' > 0$ and by Definition \ref{system of fund seq} an $\alpha$-size set consists of an $\alpha'$-size set followed by a $\lead(\alpha)$-size set.
The strategy is similar to the previous cases, but it is slightly more delicate and requires additional care, so we provide the full details.
Fix a recursive enumeration $\{s_i: i \in \N\}$ of all the elements of $[X]^{!1 \uplus \alpha'}$ such that for all $n \in \N$, if $i$ is the least with $n \in s_i$ then each $(1 \uplus \alpha')$-size set of numbers strictly smaller than $n$ is enumerated before $s_i$. 
We aim to define a sequence $(h_i)_{i \in \N}$ of $(1 \uplus \alpha')$-size subsets of $X$ and a sequence $(H_i)_{i \in \N}$ of infinite subsets of $X$ with the following properties:
\begin{enumerate}
    \item $h_i$ comes before $h_{i+1}$ in the fixed enumeration $\{s_i : i \in \N \}$;
    \item $H_{i+1} \subset H_i$;
    \item $h_i < \min(H_i)$;
    \item $h_i \subset \bigcup_{j < i} h_j \cup H_{i-1}$;
    \item $H_i$ is computable from $\TJ(c \oplus X, \beta)$ for some $\beta < \lead(\alpha)$;
    \item for each $s,t \in [H_i]^{! \lead(\alpha)[\max h_i]}$ $c(h_i \conc s) = c(h_i \conc t)$.
\end{enumerate}

We make an observation about property 4.
At each stage $i$, if $s_\ell \subset \bigcup_{j < i} h_j \cup H_{i-1}$ we say that $s_\ell$ is an \textit{eligible set at stage $i$}.
Being eligible means that $s_\ell$ is a suitable candidate to be chosen as $h_i$ at stage $i$.
Notice that if $s_\ell \subset \bigcup_{j < i} h_j$ then for each $n \ge i$, $s_\ell$ will be eligible at stage $n$.

Initialize the construction by leaving $h_{-1}$ undefined and setting $H_{-1} = X$.
Suppose we have defined the sequences up to $h_{i-1}$ and $H_{i-1}$.
Let $h_i$ be the least eligible set at stage $i$ in the enumeration $\{s_i : i \in \N \}$ which is different from $h_j$ for each $j < i$.
Notice that $h_i$ was eligible also at stage $j$ for $j<i$ and so it must occur after each $h_j$ for $j<i$ in the enumeration $\{s_i : i \in \N \}$, as otherwise it would have been chosen at a previous stage.
Consider the $c \oplus H_{i-1}$ computable coloring
\begin{align*}
    f_{h_i} \colon & [H_{i-1} \setminus \{0, \ldots, \max h_i\}]^{! \lead(\alpha)[\max h_i]} \to k \\
     & t \mapsto c(h_i \conc t)
\end{align*}
Then by inductive hypothesis $\TJ(f_{h_i} \oplus H_{i-1}, \lead(\alpha)[\max h_i]+1)$ computes an infinite homogeneous set $H_i$ for $f_{h_i}$.
By the construction we know that $H_i$ is computable in $\TJ(c \oplus X, \b)$ and since $\lead(\alpha)$ is indecomposable we also know that $\beta < \lead(\alpha)$.
Therefore the sequences up to $i$ satisfy the requirements.

Let $Z = \bigcup_{i \in \N} h_i$.
We claim that for each $u \in [Z]^{! 1 \uplus \alpha'}$ there is $i$ such that $u = h_i$.
Let $u$ be a $(1 \uplus \a')$-size subset of $Z$ and say $u = s_\ell$ in the fixed enumeration.
Then for some $j$, $s_\ell \subset \bigcup_{i < j} h_i$: as we notice before, this means that $s_\ell$ will be eligible at any stage after stage $i$.
Since $s_\ell$ has only $\ell$ predecessors in the fixed enumeration $\{s_i : i \in \N \}$ of the $(1 \uplus \a')$-size sets, it will be chosen as one of the $h_i$ no later than stage $i+\ell$.

Let $f \colon [Z]^{! 1 \uplus \alpha'} \to k$ be defined as $f(h_i) = f_{h_i} (t)$ for $t \in [H_i]^{! \lead(\alpha)[\max h_i]}$: by definition of $H_i$ the coloring $f$ is well defined and does not depend on the choice of $t \in [H_i]^{! \lead(\alpha)[\max h_i]}$.
Notice that since $\lead(\alpha)$ is indecomposable and each stage is computable in $\TJ(c \oplus X, \beta)$ for some $\beta < \lead(\alpha)$, $\TJ(c \oplus X, \lead(\alpha))$ uniformly computes the sequences $(h_i)_{i \in \N}$ and $(H_i)_{i \in \N}$.
Therefore $f$ and $Z$ are computable from $\TJ(c \oplus X, \lead(\alpha))$ and consequently by inductive hypothesis $\TJ(\TJ(c \oplus X, \lead(\alpha)), \alpha'+1) = \TJ(c \oplus X, \a+1)$ computes an infinite homogeneous set $H$ for $f$.
It is immediate to verify that $H$ is homogeneous for $c$ too.

\medskip

\textbf{Case 4:} Suppose that $\alpha$ is decomposable, let $\a' < \a$ be such that $\alpha = \lead(\alpha) + \a'$ and let $d \colon [Y]^{! \alpha} \to k$.
Since $\alpha$ is decomposable then $\alpha' > 0$ and by Definition \ref{system of fund seq} an $\alpha$-size set consists of an $\alpha'$-size set followed by a $\lead(\alpha)$-size set.
The proof is completely analogous to the previous case so we only highlight the differences.
Consider the same recursive enumeration $\{s_i: i \in \N\}$ of the elements of $[Y]^{!1 \uplus \alpha'}$ as in the previous case.
We construct a sequence $(h_i)_{i \in \N}$ of $(1 \uplus \alpha')$-size subsets of $Y$ and a sequence $(H_i)_{i \in \N}$ of infinite subsets of $Y$.
The first 5 properties we require are the same as in the previous case (replacing $X$ with $Y$ and $c$ with $d$), while property 6 is:
\begin{enumerate}[start=6]
    \item for each $s,t \in [H_i]^{!1 \uplus \lead(\alpha)[\max h_i]}$ $d(h_i \conc s) = d(h_i \conc t)$.
\end{enumerate}

The steps of the construction are the same, with the difference that the $d \oplus H_{i-1}$ computable coloring we consider is
\begin{align*}
    f_{h_i} \colon & [H_{i-1} \setminus \{0, \ldots, \max h_i\}]^{! 1 \uplus \lead(\alpha)[\max h_i]} \to k \\
     & t \mapsto c(h_i \conc t)
\end{align*}

Let $Z = \bigcup_{i \in \N} h_i$.
The proof that for each $u \in [Z]^{! 1 \uplus \alpha'}$ there is $i$ such that $u = h_i$ is the same as in the previous case.
Let $f \colon [Z]^{! 1 \uplus \alpha'} \to k$ be defined as $f(h_i) = f_{h_i} (t)$ for $t \in [H_i]^{! \lead(\alpha)[\max h_i]}$, which is well defined as before.
Then we produce exactly as in the previous case an infinite homogeneous set $H$ for $f$ computable in $\TJ(d \oplus Y, \a+1)$, which is immediately verified to be homogeneous for $d$ too.
\end{proof}

The final corollary translates the results of Theorem \ref{upper general case} in the framework of reverse mathematics and conclude the proof of the upper bound and of the Main Theorem \ref{thm:main}.

\begin{corollary}[$\RCA$]
For each $\alpha < \Gamma_0$ and each $k \in \N$, $\Pi^0_{\lead(\alpha)} \CA \vdash \RT^{! \alpha}_k$.
\end{corollary}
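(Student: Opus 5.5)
Given $\Pi^0_{\lead(\alpha)}\CA$, an infinite $X$ and a coloring $c \colon [X]^{!\alpha} \to k$, the plan is to formalize in second-order arithmetic the computation from Theorem \ref{upper general case}. That theorem shows that $\TJ(c\oplus X,\alpha+1)$ computes a solution. By Lemma \ref{equivsystems}, $\Pi^0_{\lead(\alpha)}\CA$ proves $\Pi^0_{\alpha+1}\CA$: take a standard $n$ with $\lead(\alpha)\cdot n \ge \alpha+1$. So $\Pi^0_{\lead(\alpha)}\CA$ proves both $\WO(\alpha+1)$ (from $\WO(\lead(\alpha)\cdot n)$) and the existence of $Y=\TJ(c\oplus X,\alpha+1)$. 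It then suffices to show, inside the theory, that $Y$ computes an infinite $c$-homogeneous set. In particular the homogeneous set then exists by $\Delta^0_1$-comprehension relative to $Y$.

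The argument proceeds by external induction on $\alpha<\G_0$, following the case split of Theorem \ref{upper general case}: $\alpha$ indecomposable versus decomposable, and $[X]^{!\alpha}$ versus $[X]^{!1\uplus\alpha}$. Here $\alpha$ is a fixed standard ordinal, so we only need a standard proof for each $\alpha$. At this level the base cases are standard. $\RT^n_k$ for standard $n$ is provable in $\ACA$ by Jockusch's argument, $\RT^{!\om}_k$ is provable in $\ACA^+$ by \cite{carlucci2014}, and Lemma \ref{upluslemma} formalizes in $\ACA^+$, since $\TJ(c\oplus X,\om)$ uniformly contains all finite jumps.

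\textbf{Key step.} In the indecomposable case we run the construction of $(h_i,H_i)$ inside $Y$. The difficulty is that the inductive hypothesis is needed for the uncountably many (in the model, possibly nonstandard) ordinals $\alpha[h_i]$, for nonstandard $h_i$, while external induction only covers standard instances. So I would first strengthen the statement being proved. For fixed standard $\alpha$, I would prove in $\RCA$ plus arithmetical transfinite induction along $\alpha+1$ a uniform version: there is a single Turing functional $\Phi_\beta$, computable uniformly in $\beta\le\alpha$ (this needs a version of the recursion theorem), such that for every $\beta\le\alpha$, every coloring $c'$ of $[X']^{!\beta}$ or $[X']^{!1\uplus\beta}$, and every $Z=\TJ(c'\oplus X',\beta+1)$, the set $\Phi_\beta^{Z}$ is an infinite homogeneous set. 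The ingredients for this induction are as follows.
\begin{enumerate}
\item The statement ``$\Phi_\beta^Z$ is infinite homogeneous'' is arithmetical in $Z,c',X'$.
\item Transfinite induction for arithmetical formulas along $\alpha+1$ is available, since $\ACA$ plus $\WO(\alpha+1)$ gives it (as in Corollary \ref{induction}).
\item The successive jumps $\TJ(H_{i-1}\oplus c,\alpha[h_i]+1)$ used in the construction are uniformly computable from $Y$, because $\alpha[h_i]+1<\alpha$ and because of the coding of Definition \ref{dfn:TuringJump}.
\end{enumerate}

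The decomposable case $\alpha=\lead(\alpha)+\alpha'$ uses the same uniform functional applied to $\TJ(\TJ(c\oplus X,\lead(\alpha)),\alpha'+1)$. This set is computable from $\TJ(c\oplus X,\alpha+1)$ by reindexing levels, and that reindexing is provable in $\RCA$. The final step is the $\RT^1_k$ step, which is trivially effective relative to $Y$.

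\textbf{Main obstacle.} I expect the hardest part to be this uniformity and induction bookkeeping: making the functional $\Phi_\beta$ genuinely uniform in $\beta$, including across the limit stages where $\TJ(\cdot,\lambda)$ is a union, and checking that the verification ``for each $i$, $H_i$ is infinite homogeneous for $f_{h_i}$'' is an arithmetical induction along $\omega$ within $Y$. The remaining steps are routine once that is set up.
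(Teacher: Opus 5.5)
Your first paragraph is exactly the paper's proof. The paper combines Theorem~\ref{upper general case} with Lemma~\ref{equivsystems} and leaves implicit the passage from ``$\TJ(c\oplus X,\alpha+1)$ computes a solution'' to ``$\Pi^0_{\alpha+1}\CA\vdash\RT^{!\alpha}_k$''. You are right that this passage needs an internal induction, since the step at $\alpha$ calls the hypothesis at $\alpha[h_i]$ for nonstandard $h_i$ (e.g.\ $\omega[h]=h$).

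\textbf{The gap.} The induction you set up does not go through as stated. Your statement $P(\beta)$ reads ``for all $c',X'$ and all $Z=\TJ(c'\oplus X',\beta+1)$, $\Phi_\beta^Z$ is infinite homogeneous''. This is $\Pi^1_1$, not arithmetical. The inductive step must apply $P(\alpha[h_i])$ to the sets $f_{h_i}\oplus H_{i-1}$ produced by the construction, so $c',X',Z$ cannot be frozen as parameters. Hence arithmetical transfinite induction (your item 2) does not apply. $\Pi^1_1$ induction is not available in $\Pi^0_{\lead(\alpha)}\CA$: already $\Sigma^1_1$ induction along $\omega$ fails in a nonstandard model of $\ATR$ cut down to the sets computable from $\TJ(\emptyset,\lead(\alpha)\cdot n)$ for standard $n$.

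\textbf{Repair.} Relativize to the one hierarchy you already have. Fix $Y=\TJ(c\oplus X,\alpha+1)$ and induct on the following statement, which is arithmetical in $Y$: for every $\gamma$ with $\gamma+\beta\le\alpha$ and every index $e$ such that $\{e\}^{\TJ(c\oplus X,\gamma)}$ is an instance of $\RT^{!\beta}_k$ or of $\RT^{!1\uplus\beta}_k$, some index computes a solution from $\TJ(c\oplus X,\gamma+\beta+1)$. The stage-$i$ instances are of this form at the level $\gamma+\beta_{i-1}$ where $H_{i-1}$ lives. What keeps the construction inside $\TJ(c\oplus X,\gamma+\beta)$ is that the accumulated level $\beta_{i-1}+\beta[h_i]+1$ stays below the indecomposable $\beta$. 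Your item 3 cites only $\alpha[h_i]+1<\alpha$, which is not enough, because $H_{i-1}$ already sits at a positive level.

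\textbf{Two further corrections.} First, do not ask for a single functional working at exactly level $\beta+1$. For indecomposable $\beta$, the closing $\RT^1_k$ step on the $\TJ(\cdot,\beta)$-computable coloring $f$ is effective only non-uniformly. Deciding which color recurs is a $\Pi^0_2$ question over $\TJ(\cdot,\beta)$, and $\RT^1_2$ is not uniformly solvable from one jump. So the construction does not deliver that version; a uniform version needs level $\beta+2$. The existential form above suffices: at each stage the least working index is found from finitely many further jumps, these are absorbed below the indecomposable ordinal in question, and so $(h_i,H_i)_{i}$ is still uniformly computable from the limit level.

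Second, the finite levels, including nonstandard $n$, must be covered inside the internal induction, not treated as standard base cases. Use a uniform form of Jockusch's argument: a prehomogeneous set from one jump, then the hypothesis for $n$. The case analysis of Theorem~\ref{upper general case} only starts above $\omega$, and its Case 3 is circular for finite ordinals.
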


\begin{proof}
By Theorem \ref{upper general case} we know that for each instance $c \colon [X]^{! \alpha} \to k$ of $\RT^{! \alpha}_k$, $\TJ(c \oplus X, \a+1)$ computes an infinite homogeneous set.
Hence $\Pi^0_{\alpha + 1} \CA \vdash \RT^{! \alpha}_k$.
By Lemma \ref{equivsystems} $\Pi^0_{\alpha + 1} \CA \leftrightarrow \Pi^0_{\lead(\alpha)} \CA$.
\end{proof}

\section{Conclusions and perspectives}


We have characterized the proof-theoretic strength of the family $\RT^{!\alpha}_k$ of Ramsey-like theorems for $\alpha$-size sets for $\alpha < \Gamma_0$. 
In particular we showed that, over $\RCA$, each principle $\RT^{!\alpha}_k$ is equivalent to the system $\Pi^0_{\lead(\alpha)} \CA$, providing a precise calibration of these combinatorial statements in reverse mathematics. 
This extends the analysis of $\RT^{!\omega}_2$ by Carlucci and Zdanowski \cite{carlucci2014} to all countable ordinals $\alpha<\Gamma_0$, establishing a clear hierarchy of logical strength corresponding to transfinite Turing jumps.

Our work highlights the interplay between largeness notions and the computational content of combinatorial principles. 
In particular, the $\alpha$-size sets capture the combinatorial strength needed to ensure homogeneous sets of high computational complexity since they allow us to code the $\alpha$-th Turing jump of a set.
Our reverse mathematics perspective complements previous computability-theoretic analyses of similar Ramsey-like theorems \cite{clote1984recursion}, providing formal equivalences within subsystems of second-order arithmetic.

From the perspective of proof strategies, our lower bound proof through reduction of a well-ordering principle extends the approach from \cite{carlucci_zdanowski_2012, carlucci2025}. 
It would be interesting to investigate into further extensions of such an approach to stronger Ramsey-like theorems and axiomatic systems. 

\begin{sloppypar}
The recent \cite{carlucci2024} analyzes the reverse mathematics and the computability-theoretic strength of the generalization to $\omega$-size sets of three weak Ramsey-like principles (the Free set, Thin set and Rainbow Ramsey theorems). 
Some results on the generalization of these principles to Nash-Williams barriers, including the $\alpha$-size sets are in \cite{carlucci_gjetaj_26}. A natural question is to inquire into the reverse mathematics of extensions of these principles to $\a$-size sets for $\a>\omega$.
\end{sloppypar}

\end{document}